\documentclass[reqno]{amsart}

\usepackage{amsmath,amssymb,amsthm,amscd}
\usepackage[mathscr]{eucal}
\usepackage{color}
\usepackage[all]{xy}

\setlength{\topmargin}{-1cm}
\setlength{\oddsidemargin}{-0.0cm}
\setlength{\evensidemargin}{-0.0cm}
\setlength{\textwidth}{16cm}
\setlength{\textheight}{23cm}

\newtheorem{theorem}{Theorem}
\newtheorem{lemma}[theorem]{Lemma}

\newtheorem{proposition}[theorem]{Proposition}

\theoremstyle{definition}

\newtheorem{example}[theorem]{Example}
\newtheorem{remark}[theorem]{Remark}

\newcommand{\C}{{\mathbb C}}

\newcommand{\Z}{{\mathbb Z}}

\newcommand{\B}{\mathcal{B}}
\newcommand{\cd}{\cdots}
\newcommand{\eb}{\bold{e}}

\renewcommand{\Im}{\mathrm{Im}\,}
\newcommand{\K}{\mathcal{K}}

\newcommand{\La}{\Lambda}

\newcommand{\ot}{\otimes}
\newcommand{\pair}[1]{\langle{#1}\rangle}
\newcommand{\tE}{\tilde{E}}
\newcommand{\tF}{\tilde{F}}
\newcommand{\U}{\mathbf{U}}
\newcommand{\vep}{\varepsilon}
\newcommand{\vphi}{\varphi}
\newcommand{\W}{\mathcal{W}}

\begin{document}

\title{Kirillov-Reshetikhin modules and quantum $K$-matrices 
}

\author{HIROTO KUSANO}
\address{Department of Mathematics, Osaka City University,
3-3-138 Sugimoto, Sumiyoshi-ku, Osaka, 558-8585, Japan}
\email{m21sa009@st.osaka-cu.ac.jp}

\author{MASATO OKADO}
\address{Osaka Central Advanced Mathematical Institute \&
Department of Mathematics, Osaka Metropolitan University,
3-3-138 Sugimoto, Sumiyoshi-ku, Osaka, 558-8585, Japan}
\email{okado@omu.ac.jp}

\author{HIDEYA WATANABE}
\address{Osaka Central Advanced Mathematical Institute \&
Department of Mathematics, Osaka Metropolitan University,
3-3-138 Sugimoto, Sumiyoshi-ku, Osaka, 558-8585, Japan}
\email{watanabehideya@gmail.com}

\keywords{}

\begin{abstract}
From a quantum $K$-matrix of the fundamental representation, we construct one
for the Kirillov-Reshetikhin module by fusion construction. Using the $\imath$crystal
theory by the last author, we also obtain combinatorial $K$-matrices corresponding
to the symmetric tensor representations of affine type $A$ for all quasi-split Satake 
diagrams.
\end{abstract}

\maketitle

\section{Introduction}
A quantum symmetric pair, is a pair $(\U,\U^\imath)$ 
of a quantized enveloping algebra $\U$ and its coideal subalgebra $\U^\imath$.
The $\mathbf{U}^\imath$ itself is referred to as an $\imath$quantum group.
It is defined once we are given a Satake diagram (in the sense of \cite[Definition 2.7]{RV}) of symmetrizable Kac-Moody type.
Consider the Dynkin diagram of a symmetrizable Kac-Moody Lie algebra and let 
$I$ be the set of its nodes. A Satake diagram is a triple $(I,I_\bullet,\tau)$, where
$I_\bullet\subset I$ is a subdiagram of finite type and $\tau$ is a diagram automorphism
of order at most 2 satisfying certain conditions. From a Satake diagram of finite type (i.e., the Dynkin diagram $I$ is of finite type), Letzter constructed
a right coideal subalgebra $\U^\imath$ depending on parameters \cite{Le}.
Later, Kolb generalized her method to a wider class including arbitrary symmetrizable Kac-Moody types \cite{Ko}. Recently, 
new developments have been made. Universal $K$-matrix was obtained in \cite{BW1} (see also \cite{BK}),
and the theory of canonical bases for symmetric pairs, also known as $\imath$canonical
bases, was initiated in \cite{BW1} and developed in \cite{BW2,BW3}.

Let us go back to the usual quantum groups. We consider a quantum group $\U$
associated to an affine Lie algebra. It is known \cite{HKOTY:1999,HKOTT:2002,OS} 
that there exists a distinguished family of finite-dimensional $\U$-module, 
called Kirillov-Reshetikhin modules, that have crystal bases 
in the sense of Kashiwara \cite{Ka:1991}. For such Kirillov-Reshetikhin crystals,
a combinatorial version of the Yang-Baxter equation is satisfied by the combinatorial
$R$-matrix, and it is applied to the
analysis of box-ball systems, a kind of discrete integrable dynamical systems 
\cite{FOY,HKOTY:2002}.
We can also consider a box-ball system with boundary \cite{KOY}. In that case, together
with the Yang-Baxter equation, a combinatorial version of the reflection equation is also 
needed for integrability. It should be satisfied by the crystal limit of a quantum $K$-matrix.
However, we cannot take such crystal limit directly from the universal $K$-matrix.
In the case of the quantum $R$-matrix, the combinatorial $R$-matrix was determined
by using the crystal theory. Very recently, the theory of $\imath$crystal bases was
initiated by the last author in \cite{Wa2}, although it is restricted to quasi-split types 
$I_\bullet=\emptyset$. We would like to apply it to determine combinatorial $K$-matrices for
Kirillov-Reshetikhin crystals that satisfy the combinatorial reflection equation.

In this paper, we construct a quantum $K$-matrix for arbitrary Kirillov-Reshetikhin 
modules from that of fundamental representations. This method is well known as
fusion construction. 
However, we could only find \cite{BPO} for the case related to the quantum $K$-matrix, 
although it is not easy to see how their construction and ours are related.
So we decided to explain it in rather detail in this paper.
We then use the theory of $\imath$crystals in \cite{Wa2} 
and obtain explicitly
combinatorial $K$-matrices of the Kirillov-Reshetikhin crystals associated to the 
first fundamental representation, or symmetric tensor representations of arbitrary level,
for all Satake diagrams of quasi-split type based on the Dynkin diagram of affine type $A$. 
Satake diagrams of affine type are classified in \cite[Table 16]{RV}. According to it,
all quasi-split types are A.1, A.3 with no $\bullet$ and A.4. To be more precise, we 
also exclude A.3c to restrict our cases when $\tau(0)=0$ where 0 is the affine node.
All the combinatorial $K$-matrices in this paper are new, although the A.1 case
was also treated in \cite{KOYo}. This is due to the fact that the choices of parameters
appearing in $\U^\imath$ are different.
The parameters in this paper are chosen in a way such that the theory of $\imath$crystal is applicable, and the $\imath$crystal graph of the Kirillov-Reshetikhin crystals under consideration are connected.

Throughout the paper, we use the following notations: $\chi(st)=1\,(\text{$st$ is true}),
=0\,(\text{$st$ is false})$. For an integer $m$, we set 
$(m)_+=\max(m,0),\theta(m)=\chi(m\text{ is odd})$.

\section{Kirillov-Reshetikhin modules and fusion construction of quantum $K$-matrices}

\subsection{Quantum group}
We recall the definition of the quantum group associated with a symmetrizable generalized 
Cartan matrix $A=(a_{ij})_{i,j\in I}$. There exists a diagonal matrix $D=\mathrm{diag}(d_i)_{i\in I}$ 
such that $DA$ is symmetric. We take $d_i$'s to be pairwise coprime positive integers.
Let $\{\alpha_i\mid i\in I\}$ and $\{h_i\mid i\in I\}$ be the sets of simple roots and 
simple coroots. Then we have $\pair{h_i,\alpha_j}=a_{ij}$ for $i,j\in I$. The quantum group $\U$
associated to $A$ is defined to be an associative algebra over $\C(q)$ generated by
$E_i,F_i,K_i^{\pm1}$ ($i\in I$) subject to the relations
\begin{align*}
&K_iK_j=K_jK_i,\quad K_iE_j=q_i^{a_{ij}}E_jK_i,\quad K_iF_j=q_i^{-a_{ij}}F_jK_i,
\quad E_iF_j-F_jE_i=\delta_{ij}\frac{K_i-K_i^{-1}}{q_i-q_i^{-1}},\\
&\sum_{r=0}^{1-a_{ij}}(-1)^rE_i^{(r)}E_jE_i^{(1-a_{ij}-r)}
=\sum_{r=0}^{1-a_{ij}}(-1)^rF_i^{(r)}F_jF_i^{(1-a_{ij}-r)}=0\quad\text{if }i\ne j
\end{align*}
for $i,j\in I$, where $q_i=q^{d_i},[m]_i=\frac{q_i^m-q_i^{-m}}{q_i-q_i^{-1}},[m]_i!=\prod_{k=1}^m[k]_i,
E_i^{(m)}=E_i^m/[m]_i!,F_i^{(m)}=F_i^m/[m]_i!$.
$\U$ has the structure of a Hopf algebra with coproduct
\begin{equation} \label{coprod}
\Delta(E_i)=E_i\ot1+K_i\ot E_i,\quad \Delta(F_i)=F_i\ot K_i^{-1}+1\ot F_i,\quad \Delta(K_i)=K_i\ot K_i.
\end{equation}
This coproduct is sometimes denoted by $\Delta_+$ (see \cite{Ka:1991} for instance).

\subsection{$\imath$Quantum group}
We introduce an $\imath$quantum group $\U^\imath$. To do this, we first recall a
Satake diagram in the sense of \cite[Definition 2.7]{RV}. A Satake diagram is a pair of a Dynkin diagram whose vertices are
painted white or black and an involutive permutation $\tau$ on it satisfying 
$a_{\tau(i)\tau(j)}=a_{ij}$ for all $i,j\in I$. 
It is represented by a triple $(I,I_\bullet,\tau)$ where $I_\bullet(\subset I)$ is 
the set of black vertices. We also set $I_\circ=I\setminus I_\bullet$. Let $w_\bullet$
be the longest element of the Weyl group $W_\bullet$ associated to $I_\bullet$ and
$\rho^\vee_\bullet$ the half sum of the positive coroots of $I_\bullet$. Then 
$(I,I_\bullet,\tau)$ should satisfy 
\begin{align}
&w_\bullet(\alpha_i)=-\alpha_{\tau(i)}\quad\text{for }i\in I_\bullet,\\
&\pair{\rho^\vee_\bullet,\alpha_i}\in\Z\quad\text{for }i\in I_\circ\text{ such that }\tau(i)=i.
\end{align}

Next we recall 
Lusztig's braid group action $T_i=T''_{i,1}$ on $\U$ (\cite[37.1.3]{L}). It is defined by
\begin{align*}
T_i(E_j)&=\left\{\begin{array}{ll}
-F_iK_i&\text{if }i=j,\\
\sum_{r+s=-a_{ij}}(-1)^rq_i^{-r}E_i^{(s)}E_jE_i^{(r)}\qquad&\text{if }i\ne j,
\end{array}\right.\\
T_i(F_j)&=\left\{\begin{array}{ll}
-K_i^{-1}E_i&\text{if }i=j,\\
\sum_{r+s=-a_{ij}}(-1)^rq_i^rF_i^{(s)}F_jF_i^{(r)}\qquad&\text{if }i\ne j,
\end{array}\right.\\
T_i(K_j)&=K_jK_i^{-a_{ij}}.
\end{align*}
Set
\[
T_{w_\bullet}=T_{i_1}\cdots T_{i_l}
\]
when $w_\bullet=s_{i_1}\cdots s_{i_l}$ is a reduced expression.

To define a generator $B_i$, we introduce two sets of parameters
$(\varsigma_i)_{i\in I_\circ}\in(\C(q)^\times)^{I_\circ},(\kappa_i)_{i\in I_\circ}
\in\C(q)^{I_\circ}$. $\varsigma_i$ should satisfy $\varsigma_i=\varsigma_{\tau(i)}$ for
$i\in I_\circ$. We set
\begin{equation} \label{B_i}
B_i=F_i+\varsigma_iT_{w_\bullet}(E_{\tau(i)})K_i^{-1}+\kappa_iK_i^{-1}.
\end{equation}
The $\imath$quantum group $\U^\imath$ defined as a subalgebra of $\U$ generated
by $\U(I_\bullet)$ and $\{B_i,K_iK_{\tau(i)}^{-1}\}_{i\in I_\circ}$, where $\U(I_\bullet)$ is
a usual quantized enveloping algebra associated to $I_\bullet$. $\U^\imath$ is a right
coideal with respect to the coproduct given in the previous subsection.

\subsection{Kirillov-Reshetikhin modules} \label{subsec:KRmodule}
In this subsection, we assume $A$ is of affine type and $0\in I$ to be the affine node.
Let $\{\La_i\mid i\in I\}$ be the set of fundamental weights and set
$P=\bigoplus_{i\in I}\Z\La_i\oplus\Z\delta$ where $\delta$ is the standard null root.
Since we are interested in finite-dimensional $\U$-modules, our weight lattice to
consider should be $P_{\mathrm{cl}}=P/\Z\delta$. 
For a finite-dimensional $\U$-module $U$, let $U_{\mathrm{aff}}$ denote the 
$\U$-module $\C(q)[z,z^{-1}]\ot U$ with the actions of $E_i$ and $F_i$ by 
$z^{\delta_{i0}}\ot E_i$ and $z^{-\delta_{i0}}\ot F_i$. For $x\in\C(q)$, we define
the $\U$-module $U(x)$ by $U_{\mathrm{aff}}/(z-x)U_{\mathrm{aff}}$.
For finite-dimensional modules $U,V$ and $x,y\in\C(q)$, we introduce 
the quantum $R$-matrix $R_{U,V}(x,y):U(x)\ot V(y)\rightarrow V(y)\ot U(x)$
as a $\U$-linear operator. If $U(x)\ot V(y)$ is irreducible, $R_{U,V}(x,y)$
is an isomorphism and is determined up to scalar multiple. But it happens 
for some special elements $x,y$ of $\C(q)$ that $R_{U,V}(x,y)$ is not an isomorphism.
Quantum $R$-matrices satisfy the Yang-Baxter equation.
\begin{equation} \label{YBeq}
R_{V,W}(y,z)R_{U,W}(x,z)R_{U,V}(x,y)=R_{U,V}(x,y)R_{U,W}(x,z)R_{V,W}(y,z).
\end{equation}

We next explain Kirillov-Reshetikhin modules, KR modules for short, following \cite[\S3]{OS}. 
Set $I_0=I\setminus\{0\}$. 
For $r\in I_0$, we define $\varpi_r=\La_r-\pair{h_r,c}\La_0$ and call it a level 0 fundamental
weight. Here $c$ is the canonical central element. 
In \cite{Ka:2002}, a finite-dimensional $\U$-module $W(\varpi_r)$ is constructed as a quotient 
of the extremal weight module $V(\varpi_r)$. It is called a (level 0) fundamental module.
Let $\W$ be a fundamental module. From $\W$, we can construct a KR
module $\W_s$ ($s\in\Z_{>0}$) by fusion construction.
For $s\geq 2$, let $\mathfrak{S}_s$ denote the group of permutations on $s$ letters generated by $\sigma_i=(i\ i+1)$ for $1\leq i\leq s-1$.
We have $\U$-linear maps 
\begin{equation}\label{R_w}
R_w(x_1,\ldots,x_s): 
\W(x_1)\otimes \cdots\otimes \W(x_s) \longrightarrow \W(x_{w(1)})\otimes \cdots\otimes \W(x_{w(s)}),
\end{equation}
for $w\in \mathfrak{S}_s$ and $x_1,\ldots, x_s\in \C(q)$ satisfying
\begin{align*}
&R_1(x_1,\ldots,x_s) = {\rm id}_{\W(x_1)\otimes \cdots\otimes \W(x_s)},\\
&R_{\sigma_i}(x_1,\ldots,x_s) = \left(\otimes_{j<i}{\rm id}_{\W(x_j)}\right)\otimes R(x_i,x_{i+1}) \otimes \left(\otimes_{j>i+1}{\rm id}_{\W(x_j)}\right),\\
&R_{ww'}(x_1,\ldots,x_s) = R_{w'}(x_{w(1)},\ldots,x_{w(s)})R_{w}(x_1,\ldots,x_s),
\end{align*}
for $w, w'\in \mathfrak{S}_s$ with $\ell(ww')=\ell(w)+\ell(w')$ where $\ell(w)$ denotes the length of $w$ and $R(x,y)=R_{\W,\W}(x,y)$.
To construct a KR module $\W_s$ for $s\ge2$, we need to set 
$x_i=q^{d_r(s-2i+1)}=q_r^{s-2i+1}$, where $d_r$ is determined in the beginning of \S2.1.
In particular, $d_r=1$ for all $r\in I_0$ for untwisted ADE cases.
Hence we have a $\U$-linear map $R_s = R_{\sigma_0}(x_1,\ldots,x_s)$:
\begin{equation*}
R_s : \W(q_r^{s-1})\ot \dots\ot  \W(q_r^{1-s}) \longrightarrow \W(q_r^{1-s})\ot \dots\ot \W(q_r^{s-1}).
\end{equation*}
Here $\sigma_0$ is the longest element in $\mathfrak{S}_s$.
Now we define a KR module corresponding to $\W$ and $s$ by
\begin{equation}\label{W_s}
\W_s = \Im R_s.
\end{equation}

For a KR module $\W_s$ corresponding to $\varpi_r$ ($r\in I_0$),
we also define the dual KR module $\W^\vee_s$ as follows. 
Let $W$ be the affine Weyl group and $W_0$ its subgroup generated by 
$\{s_i\mid i\in I_0\}$ where $s_i$ stands for the simple reflection for $\alpha_i$.
Both act on $P_{\mathrm{cl}}$. Let $w_0$ be the longest element of $W_0$.
For $r\in I_0$, we define $r^\vee\in I_0$ by $-w_0\varpi_r=\varpi_{r^\vee}$.
From this fixed $r^\vee$, we set $\W^\vee=W(\varpi_{r^\vee})$. $\W^\vee_s$ is
constructed similarly from $\W^\vee$ by the fusion construction.

\subsection{Quantum $R$-matrix}
Fusion construction is used not only for defining KR modules
but also for giving a quantum $R$-matrix for two KR modules. 
For two KR modules $\W_s,\W'_{s'}$, define a linear map $R_{\W_s,\W'_{s'}}(x,y)$ from
\begin{equation} \label{ambient}
\W_s(x)\ot \W'_{s'}(y)\subset
(\W(q_r^{1-s}x)\ot \dots\ot \W(q_r^{s-1}x))\ot
(\W'(q_{r'}^{1-s'}y)\ot \dots\ot \W'(q_{r'}^{s'-1}y))
\end{equation}
by
\[
R_{\W_s,\W'_{s'}}(x,y)=(R_{s,s+1}\cdots R_{2,3}R_{1,2})
\cdots
(R_{s+s'-2,s+s'-1}\cdots R_{s,s+1}R_{s-1,s})
(R_{s+s'-1,s+s'}\cdots R_{s+1,s+2}R_{s,s+1}),
\]
where $R_{i,i+1}$ stands for the quantum $R$-matrix acting only on the $i$-th and
$i+1$-th components of the right hand side of \eqref{ambient}. Hence, it maps to 
\[
(\W'(q_{r'}^{1-s'}y)\ot \dots\ot \W'(q_{r'}^{s'-1}y))\ot
(\W(q_r^{1-s}x)\ot \dots\ot \W(q_r^{s-1}x)),
\]
namely, it interchanges $\W(q_r^{1-s}x)\ot \dots\ot \W(q_r^{s-1}x)$ and
$\W'(q_{r'}^{1-s'}y)\ot \dots\ot \W'(q_{r'}^{s'-1}y)$.

\begin{proposition}
\begin{itemize}
\item[(i)] The image of $R_{\W_s,\W'_{s'}}(x,y)$ belongs to $\W'_{s'}(y)\ot\W_s(x)$.
\item[(ii)] They satisfy the Yang-Baxter equation:
\[
R_{\W'_{s'},\W''_{s''}}(y,z)R_{\W_s,\W''_{s''}}(x,z)R_{\W_s,\W'_{s'}}(x,y)
=R_{\W_s,\W'_{s'}}(x,y)R_{\W_s,\W''_{s''}}(x,z)R_{\W'_{s'},\W''_{s''}}(y,z).
\]
\end{itemize}
\end{proposition}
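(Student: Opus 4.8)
The plan is to reduce both assertions to combinatorics of the symmetric group, the only genuine input being the consistency relation displayed for the maps $R_w$ above, extended from tensor powers of a single $\W$ to tensor products of arbitrarily many, possibly distinct, fundamental modules. Throughout I view $R_{\W_s,\W'_{s'}}(x,y)$ as (the restriction to the fused submodule of) a single map on the ambient space of \eqref{ambient}: one checks that the displayed product of the $R_{i,i+1}$ is a reduced word, of length $ss'$, for the permutation $\beta\in\mathfrak{S}_{s+s'}$ that carries the first $s$ tensor factors past the last $s'$ ones while preserving the internal order of each block. I will freely use that $R_w$ depends only on $w$ and not on the chosen reduced expression, and that for a permutation supported on a single block the associated map is a fusion map on that block tensored with identities; in particular $R_s$ (resp. $R_{s'}$) is attached to the reversal of the first (resp. second) block, and its image is $\W_s(x)$ (resp. $\W'_{s'}(y)$).

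For (i), write $P$ for the map, from the all-decreasing ambient space, that reverses the first block tensored with the reversal of the second block, so that $\Im P=\W_s(x)\ot\W'_{s'}(y)$. The composite $R_{\W_s,\W'_{s'}}(x,y)\circ P$ reverses each block and then interchanges the two blocks; on tensor factors this is exactly the full reversal of $s+s'$ letters, i.e. the longest element of $\mathfrak{S}_{s+s'}$, and the lengths add:
\[
\binom{s}{2}+\binom{s'}{2}+ss'=\binom{s+s'}{2}.
\]
Hence $R_{\W_s,\W'_{s'}}(x,y)\circ P$ is the fusion map attached to that longest element. The very same longest element is realized, with lengths again adding, by first interchanging the two blocks and then reversing each of the two resulting blocks; by the consistency relation the corresponding factorization of this one map has the form $Q\circ S$ for some map $S$, where $Q$ reverses the two new blocks and $\Im Q=\W'_{s'}(y)\ot\W_s(x)$. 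Comparing the two factorizations of the single map gives
\[
R_{\W_s,\W'_{s'}}(x,y)\bigl(\W_s(x)\ot\W'_{s'}(y)\bigr)=\Im\bigl(R_{\W_s,\W'_{s'}}(x,y)\circ P\bigr)\subseteq\Im Q=\W'_{s'}(y)\ot\W_s(x),
\]
which is (i).

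For (ii) I would run the identical bookkeeping one level up, in $\mathfrak{S}_{s+s'+s''}$, where each of the three $R$-matrices involved is the block transposition of the corresponding pair of blocks. Tracking the blocks shows that both the left- and the right-hand sides send $[\,\text{block }1,\text{block }2,\text{block }3\,]$ to $[\,\text{block }3,\text{block }2,\text{block }1\,]$, so both realize the reversal of the three blocks, a permutation of length $ss'+ss''+s's''$; and on each side the three block transpositions have lengths $ss',ss'',s's''$, which sum to exactly this number. Thus the two sides are two reduced words for one and the same permutation, so they define the same map on the ambient space by the consistency relation. Restricting this identity to $\W_s(x)\ot\W'_{s'}(y)\ot\W''_{s''}(z)$ is legitimate because, by (i), every block transposition carries fused submodules into fused submodules, so all intermediate images stay inside the relevant fused tensor products; this yields the Yang--Baxter equation.

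The symmetric-group combinatorics is routine; the real obstacle is the single input I have taken for granted, namely that $R_w$ is well defined, independently of the reduced word, for tensor products of distinct fundamental modules. This is equivalent to the Yang--Baxter equation \eqref{YBeq} for the fundamental $R$-matrices, and its delicate aspect is normalization: the fusion construction evaluates the $R$-matrices precisely at the spectral values $q_r^{s-2i+1}$ at which, as noted in \S\ref{subsec:KRmodule}, they may fail to be isomorphisms, so one must normalize all the $R_{i,i+1}$ entering the products to be regular (pole-free) at the relevant points, in order that identities such as ``$R_{\W_s,\W'_{s'}}(x,y)\circ P$ is the longest-element fusion map'' hold on the nose rather than only up to scalars or after clearing denominators. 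Once this normalization is secured, parts (i) and (ii) follow formally from the length counts above.
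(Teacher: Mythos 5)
Your argument is correct and is essentially the paper's own proof: for (i) the paper likewise factors the longest element of $\mathfrak{S}_{s+s'}$ as (block reversals)$\cdot$(block transposition) in both orders and compares the two factorizations of $R_{\sigma_0^{(s+s')}}$, and for (ii) your "two reduced factorizations of the block reversal" is just a tidier packaging of the paper's "successive uses of the Yang--Baxter equation," since well-definedness of $R_w$ is exactly what those successive uses establish. Your closing remark about normalizing the $R_{i,i+1}$ at the special spectral points $q_r^{s-2i+1}$ is a fair caveat, but it is one the paper glosses over as well.
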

\begin{proof}
To prove (i), we slightly generalize the construction of $R_w(x_1,\ldots,x_s)$ in 
\eqref{R_w}. This time, for two fundamental modules $\W,\W'$ and
$x_1,\ldots,x_s,y_1,\ldots,y_{s'}\in\C(q)$, we consider the $\U$-linear map 
\begin{align*}\label{R'_w}
R'_{\sigma_0^{(s+s')}}(x_1,\ldots,x_s,y_1,\ldots,y_{s'}):&\,
\W(x_1)\ot\cdots\ot\W(x_s)\ot\W'(y_1)\ot\cdots\ot\W'(y_{s'})\\
&\longrightarrow \W'(y_{s'})\ot\cdots\ot\W'(y_1)\ot\W(x_s)\ot \cdots\ot \W(x_1).
\end{align*}
Here $\sigma_0^{(s+s')}$ is the longest element of $\mathfrak{S}_{s+s'}$.
Let $\sigma_0^{(s)}$ (resp. $\sigma_0^{(s')}$) be the longest element of 
the subgroup $\mathfrak{S}_s$ of the former $s$ letters $\{1,\ldots,s\}$ 
(resp. $\mathfrak{S}_{s'}$ of the latter $s'$ letters $\{s+1,\ldots,s+s'\}$)
of $\mathfrak{S}_{s+s'}$. Then $\sigma_0^{(s)}$ and $\sigma_0^{(s')}$ commute 
with each other. Let $\bar{\sigma}_0^{(s)}$ (resp. $\bar{\sigma}_0^{(s')}$) again be
the longest element of $\mathfrak{S}_s$ but of the latter $s$ letters $\{s'+1,\ldots,s+s'\}$
(resp. of the former $s'$ letters $\{1,\ldots,s'\}$). 
$\bar{\sigma}_0^{(s)}$ and $\bar{\sigma}_0^{(s')}$ also commute with each other. Set
\[
\tau=\left(\begin{array}{cccccc}
1&\cdots&s&s+1&\cdots&s+s'\\
s'+1&\cdots&s+s'&1&\cdots&s'
\end{array}\right).
\]
Then we have 
$\tau\sigma_0^{(s')}\sigma_0^{(s)}=\bar{\sigma}_0^{(s')}\bar{\sigma}_0^{(s)}\tau
=\sigma_0^{(s+s')}$. The corresponding relation for the quantum $R$-matrices 
$R_w$'s verifies the assertion. 

(ii) is shown by successive uses of the Yang-Baxter equations among quantum 
$R$-matrices $R_{\W,\W'}$, $R_{\W,\W''},R_{\W',\W''}$ with various parameters.
\end{proof}

\subsection{Quantum $K$-matrix} \label{subsec:quantum K}
Quantum $K$-matrix is a solution to the reflection equation. Let $\W$ be a fundamental
module. Following \cite{RV}, we consider two cases:
Untwisted case
\begin{equation} \label{untwisted}
K_{\W}(x):\W(x)\rightarrow \W(x^{-1}),
\end{equation}
and twisted case
\begin{equation} \label{twisted}
K_{\W}(x):\W(x)\rightarrow \W^\vee(x^{-1}).
\end{equation}
To deal with two cases together, we introduce symbols $*=\emptyset,\vee$, so that
one can write uniformly as $K_{\W}(x):\W(x)\rightarrow\W^*(x^{-1})$. Under this notation,
for two fundamental modules $\W,\W'$, the reflection equation reads as 
\begin{equation} \label{refl eq}
R_{\W'^*,\W^*}(y^{-1},x^{-1})K_{\W'}(y)R_{\W^*,\W'}(x^{-1},y)K_{\W}(x)
=K_{\W}(x)R_{\W'^*,\W}(y^{-1},x)K_{\W'}(y)R_{\W,\W'}(x,y)
\end{equation}
as a map from $\W(x)\ot\W'(y)$ to $\W^*(x^{-1})\ot\W'^*(y^{-1})$.

We define a quantum $K$-matrix 
for the KR module $\W_s$ by composing the ones and the quantum $R$
matrices for fundamental modules by
\begin{align*}
K_{\W_s}(x)=&K^{(s)}(x)\cdots K^{(2)}(x)K^{(1)}(x),\\
K^{(j)}(x)=&K_1(q_r^{-s+2j-1}x)R^*_{1,2}(q_r^{s-2j+3}x^{-1},q_r^{-s+2j-1}x)
R^*_{2,3}(q_r^{s-2j+5}x^{-1},q_r^{-s+2j-1}x)\cdots \\
&\hspace{80mm}\cdots R^*_{j-1,j}(q_r^{s-1}x^{-1},q_r^{-s+2j-1}x).
\end{align*}
Here $R^*_{i,i+1}(x,y)$ is the quantum $R$-matrix $R^*_{\W,\W}(x,y)$ acting on the
$i$-th and $i+1$-th components.
$K_{\W_s}(x)$ is a map from $\W(q_r^{1-s}x)\ot\cdots\ot\W(q_r^{s-1}x)$
to $\W^*(q_r^{1-s}x^{-1})\ot\cdots\ot\W^*(q_r^{s-1}x^{-1})$.
We can also write inductively as
\begin{align}
K_{\W_s}(x)&=K_1(q_r^{s-1}x)R^*_{1,2}(q_r^{-s+3}x^{-1},q_r^{s-1}x)
R^*_{2,3}(q_r^{-s+5}x^{-1},q_r^{s-1}x)\cdots \nonumber \\
&\hspace{55mm}\cdots R^*_{s-1,s}(q_r^{s-1}x^{-1},q_r^{s-1}x)
(K_{\W_{s-1}}(q_r^{-1}x)\ot1).
\label{rec for K}
\end{align}
\begin{figure}[h]
\begin{picture}(300,185)(-32,5)
\put(50,10){
\line(0,1){175}
\put(0,30){\put(0,20){\line(2,1){100}}\put(0,20){\vector(2,-1){100}}}
\put(0,65){\put(0,20){\line(2,1){100}}\put(0,20){\vector(2,-1){100}}}
\put(0,100){\put(0,20){\line(2,1){100}}\put(0,20){\vector(2,-1){100}}}
\put(-42,118){$K(q_r^{-2}x)$}
\put(-35,82){$K(x)$}
\put(-38,46){$K(q_r^2x)$}
\put(18,112){$R^*(q_r^2x^{-1},x)$}
\put(44,91){$R^*(q_r^2x^{-1},q_r^2x)$}
\put(18,56){$R^*(x^{-1},q_r^2x)$}
\put(110,168){$\W(q_r^{-2}x)$}
\put(110,133){$\W(x)$}
\put(110,98){$\W(q_r^2x)$}
\put(110,67){$\W^*(q_r^2x^{-1})$}
\put(110,32){$\W^*(x^{-1})$}
\put(110,-3){$\W^*(q_r^{-2}x^{-1})$}
}
\end{picture}
\caption{Graphical representation for $K_{\W_3}(x)$}
\end{figure}
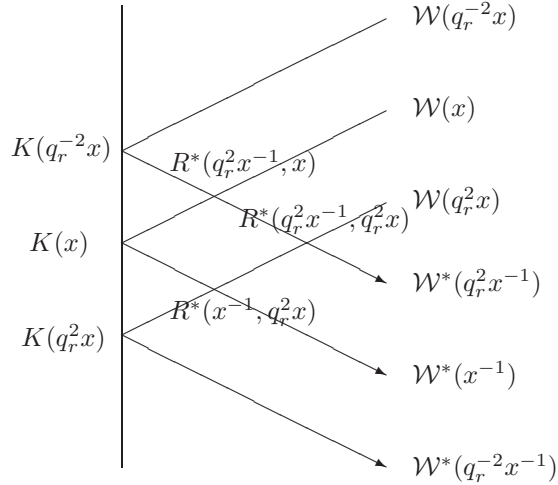
\begin{proposition} \label{prop: quantum K satisfy refl eq}
\begin{itemize}
\item[(i)] The image of $K_{\W_s}(x)$ belongs to $\W^*_s(x^{-1})$.
\item[(ii)] The quantum $K$-matrices satisfy the reflection equation:
\[
R_{\W'^*_{s'},\W^*_{s}}(y^{-1},x^{-1})K_{\W'_{s'}}(y)R_{\W_s^*,\W'_{s'}}(x^{-1},y)K_{\W_s}(x)
=K_{\W_{s}}(x)R_{\W'^*_{s'},\W_{s}}(y^{-1},x)K_{\W'_{s'}}(y)R_{\W_s,\W'_{s'}}(x,y).
\]
\end{itemize}
\end{proposition}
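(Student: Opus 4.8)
The plan is to prove both assertions by induction on $s$, using the recursive formula \eqref{rec for K} for $K_{\W_s}(x)$ together with the corresponding facts already established for fundamental modules (the single-site quantum $K$-matrix $K_\W(x)$ satisfies \eqref{refl eq}, and the $R$-matrices satisfy the Yang--Baxter equation \eqref{YBeq}). The graphical representation in Figure~1 is the right bookkeeping device throughout: each assertion becomes a statement about sliding a $K$-strand past a family of $R$-crossings, and the reflection and Yang--Baxter equations are exactly the local moves that let us perform such slides.

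For part (i), I would argue that the image of $K_{\W_s}(x)$ lies in $\W^*_s(x^{-1})$. Recall from \eqref{W_s} that $\W^*_s(x^{-1})$ is the image of the fusing operator $R_s$ acting on $\W^*(q_r^{1-s}x^{-1})\ot\cdots\ot\W^*(q_r^{s-1}x^{-1})$. So the claim is equivalent to showing that $K_{\W_s}(x)$ composed with the projection onto the complementary part vanishes, or more constructively that $K_{\W_s}(x)$ factors through $R_s$ on the target side. The key input is the analogous statement for the $R$-matrix already proved in Proposition~2.1(i): the fusion of $R$-matrices preserves the fused submodules. I would first treat the base case $s=1$, where the image of $K_\W(x)$ is automatically in $\W^*(x^{-1})=\W^*_1(x^{-1})$. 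For the inductive step I would feed $K_{\W_{s-1}}(q_r^{-1}x)\ot 1$ (whose image lies in $\W^*_{s-1}(q_r^{-1}x^{-1})\ot\W(q_r^{s-1}x)$ by induction) into the chain of $R^*$-matrices and the final $K_1$ appearing in \eqref{rec for K}, and then verify that this composite lands in the fused space $\W^*_s(x^{-1})$. The verification is again a matter of rewriting the relevant product of $R^*$-crossings so that the fusing operator $R_s$ on the target can be extracted on the left; intertwining with $R_s$ is permitted because all maps in sight are $\U$-linear and the spectral parameters have been chosen as the fusion values $q_r^{s-2i+1}$.

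For part (ii), the reflection equation for $\W_s,\W'_{s'}$, I would again induct, reducing to the fundamental-module reflection equation \eqref{refl eq} and repeated use of \eqref{YBeq}. Unwinding $K_{\W_s}(x)$, $K_{\W'_{s'}}(y)$, and the four fused $R$-matrices via their fusion definitions, both sides of the desired identity become long compositions of elementary $K_\W$'s, $R$'s, and $R^*$'s acting on the big tensor product $\W(q_r^{1-s}x)\ot\cdots\ot\W'^*(q_{r'}^{s'-1}y^{-1})$ of $s+s'$ factors, with the spectral parameters dictated by the fusion values. The strategy is then to transform the left-hand composite into the right-hand one by a sequence of local moves: each elementary reflection-equation move \eqref{refl eq} pushes one $K$-strand past one pair of $R$-crossings at the boundary, and each Yang--Baxter move \eqref{YBeq} rearranges a triple of $R$-crossings in the bulk. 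In the graphical picture this is the standard braid-isotopy argument showing that two planar diagrams with the same boundary data represent the same $\U$-linear map.

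The main obstacle I anticipate is bookkeeping rather than conceptual: one must check that at every stage the spectral parameters match exactly so that the elementary reflection and Yang--Baxter equations apply with the correct arguments, and that the order in which the local moves are performed actually carries the left configuration to the right one without getting stuck. In particular, because $*$ can be $\emptyset$ or $\vee$, the intermediate modules alternate between $\W,\W'$ and their duals $\W^*,\W'^*$, so one has to confirm that the dual $R$-matrices $R^*$ satisfy Yang--Baxter and reflection relations with the correctly dualized arguments $y^{-1},x^{-1}$ as written in \eqref{refl eq}. I would organize this by first establishing the $s'=1$ case (peeling off one factor of $\W'$), then the general $s'$ by a second induction, at each stage invoking part (i) to guarantee that the composites restrict correctly to the fused submodules $\W_s,\W'_{s'}$ and their images. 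Once the parameter matching is verified, the isotopy of diagrams delivers the identity of $\U$-linear maps, and uniqueness (up to scalar) of the $R$- and $K$-matrices on the irreducible pieces ensures that the restriction to $\W_s(x)\ot\W'_{s'}(y)$ is the asserted reflection equation.
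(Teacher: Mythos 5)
Your proposal is correct and follows essentially the same route as the paper: part (i) comes down to the intertwining relation $K_{\W_s}(x)R_s=R_s\bar{K}_{\W_s}(x)$ (your ``extracting the fusing operator $R_s$ on the target side''), and both (i) and (ii) are established by successive applications of the Yang--Baxter equation \eqref{YBeq} and the fundamental reflection equation \eqref{refl eq}, exactly as in the paper's (equally brief) argument. Your inductive organization via \eqref{rec for K} is just a bookkeeping variant of the same local-move strategy.
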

\begin{proof}
To prove (i), we note that $K_{\W_s}(x)R_s=R_s\bar{K}_{\W_s}(x)$ holds, where 
$\bar{K}_{\W_s}(x)$ is defined by replacing $q_r$ with $q_r^{-1}$ in $K_{\W_s}(x)$.
It can be proven by successive uses of the Yang-Baxter equation and the reflection 
equation \eqref{refl eq}. (ii) is also shown by successive uses of \eqref{refl eq}.
\end{proof}

\section{Crystals and $\imath$crystals}
\subsection{Crystals}
A crystal is a set $\B$ equipped with maps $\tE_i,\tF_i:\B\rightarrow\B\sqcup\{0\}$
for $i\in I$, where $0$ is a formal symbol. By depicting $b\stackrel{i}{\rightarrow}b'$
when $\tF_ib=b'$ for $b,b'\in\B$, $\B$ becomes a colored oriented graph called 
crystal graph. We set $\vep_i(b)=\max\{m\ge0\mid \tE_i^mb\ne0\},
\vphi_i(b)=\max\{m\ge0\mid \tF_i^mb\ne0\}$ for $b\in\B$. 

Let $\B$ and $\B'$ be crystals. Then $\B\ot\B'$ also has the structure of crystal by
\begin{align*}
&\vep_i(b_1\ot b_2)=\vep_i(b_2)+(\vep_i(b_1)-\vphi_i(b_2))_+,\\
&\vphi_i(b_1\ot b_2)=\vphi_i(b_1)+(\vphi_i(b_2)-\vep_i(b_1))_+,\\
&\tE_i(b_1\ot b_2)=\left\{
\begin{array}{ll}
\tE_ib_1\ot b_2\quad&\text{if }\vep_i(b_1)>\vphi_i(b_2),\\
b_1\ot\tE_ib_2\quad&\text{if }\vep_i(b_1)\le\vphi_i(b_2),
\end{array}\right.\\
&\tF_i(b_1\ot b_2)=\left\{
\begin{array}{ll}
\tF_ib_1\ot b_2\quad&\text{if }\vep_i(b_1)\ge\vphi_i(b_2),\\
b_1\ot\tF_ib_2\quad&\text{if }\vep_i(b_1)<\vphi_i(b_2).
\end{array}\right.
\end{align*}
It is called the tensor product of crystals, though it is the Cartesian product of two sets.
Note that this convention is opposite to \cite{Ka:1991}.

The notion of crystal was abstracted from that of crystal basis. We again use a different
convention from \cite{Ka:1991}, namely, we replace $q$ with $q^{-1}$. Set
$\mathbf{A}=\{f(q)\in\C(q)\mid f(q)\text{ has no pole at }q=\infty\}$ and 
let $M$ be a $\U$-module. Then a crystal basis of $M$ is a pair $(\mathcal{L},\B)$ of 
a $\mathbf{A}$-lattice $\mathcal{L}$ 
and a $\C$-basis $\B$ of $\mathcal{L}/q^{-1}\mathcal{L}$. Moreover, operators
$\tE_i,\tF_i$ have concrete meanings on $M$ and we have $\tE_i\mathcal{L}\subset
\mathcal{L},\tF_i\mathcal{L}\subset\mathcal{L}$, so $\tE_i\B\subset\B\sqcup\{0\},
\tF_i\B\subset\B\sqcup\{0\}$ where $0\in M$.

It is known that a KR module $\W_s$ introduced in section \ref{subsec:KRmodule} has a
crystal basis when our Cartan matrix $A$ is of nonexceptional affine type \cite{OS}.
We call it a KR crystal and denote it by $\B_s$.
We remark that because of the convention of crystal basis with $q$ replaced by $q^{-1}$,  
our tensor product rule of crystals is consistent with our coproduct \eqref{coprod}.
Let $\B_s$ a KR crystal. One can consider its affinization $\B_s(x)=\{b\ot x^d\mid b\in
\B_s,d\in\Z\}$. We often write $x^d b$ instead of $b \otimes x^d$. Crystal operators
$\tilde{E}_i,\tilde{F}_i$ act on $\B_s(x)$ as $\tilde{E}_i(x^db)=x^{d+\delta_{i0}}\tilde{E}_ib,
\tilde{F}_i(x^db)=x^{d-\delta_{i0}}\tilde{F}_ib$.
Let $\B_s,\B'_{s'}$ be KR crystals. Then, there exists an isomorphism of crystals
$\mathcal{R}_{\B_s,\B'_{s'}}:\B_s(x)\ot\B'_{s'}(y)\rightarrow\B'_{s'}(y)\ot\B_s(x)$, namely,
a map that commutes with the action of crystal operators. By taking a suitable limit
of $q$, $q\to\infty$ in our case, we obtain a set-theoretical Yang-Baxter equation
\[
\mathcal{R}_{\B'_{s'},\B''_{s''}}(y,z)\mathcal{R}_{\B_s,\B''_{s''}}(x,z)\mathcal{R}_{\B_s,\B'_{s'}}(x,y)
=\mathcal{R}_{\B_s,\B'_{s'}}(x,y)\mathcal{R}_{\B_s,\B''_{s''}}(x,z)\mathcal{R}_{\B'_{s'},\B''_{s''}}(y,z).
\]
from \eqref{YBeq}.

For a crystal $\B$ there is a notion of its dual crystal $\B^\vee$~\cite{Ka:1994}.
It is defined by $\B^\vee=\{b^\vee\mid b\in \B\}$ with
\begin{equation} \label{dual crystal}
\tilde{E}_ib^\vee= (\tilde{F}_ib)^\vee, \qquad
\tilde{F}_ib^\vee = (\tilde{E}_ib)^\vee.
\end{equation}
Although it is not written in the literature except in simpler cases, the dual crystal
of the KR crystal $\B_s$ is given by the KR crystal of the KR module $\W_s^\vee$
introduced in section \ref{subsec:KRmodule}. We will see examples in 
type A in section \ref{sec:type A}.

\subsection{$\imath$Crystals} \label{subsec:icrystals}
In this subsection, we briefly recall the notion of $\imath$crystals, which is an analogous notion to crystal for the $\imath$quantum group. It was introduced in \cite{Wa2} under the following assumptions:
\begin{itemize}
  \item[(A1)] $I_\bullet = \emptyset$.
  \item[(A2)] $a_{i,\tau(i)} \in \{2,0,-1\}$ for all $i \in I$.
  \item[(A3)] If $a_{i,\tau(i)} = 2$, then $\varsigma_i = q_i^{-1}$ and $\kappa_i = \frac{q_i^{s_i}-q_i^{-s_i}}{q_i-q_i^{-1}}$ for some $s_i \in \mathbb{Z}$.
  \item[(A4)] If $a_{i,\tau(i)} = 0$, then $\varsigma_i = 1$ and $\kappa_i = 0$.
  \item[(A5)] If $a_{i,\tau(i)} = -1$, then $\varsigma_i = q_i^{s_i}$, $\varsigma_i \varsigma_{\tau(i)} = q_i$, and $\kappa_i = 0$ for some $s_i \in \mathbb{Z}$.
\end{itemize}
Therefore, we always keep this assumption whenever we consider $\imath$crystals.

An $\imath$crystal is a set $\B$ equipped with structure maps $\mathrm{wt}^{\imath}$, $\beta_i$, and $\tilde{B}_i$ for $i \in I$. As a special case of \cite[Corollary 5.2.2]{Wa2}, we obtain the following.

\begin{proposition}
  Let $\B$ be a {\rm KR} crystal. Then, it has an $\imath$crystal structure as follows: Let $b \in \B$ and $i \in I$.
  \begin{itemize}
    \item $\mathrm{wt}^{\imath}(b) = \overline{\operatorname{wt}(b)}$, where $\overline{\lambda}$ denote the image of $\lambda \in P_{\text{cl}}$ in $P_{\text{cl}}^\imath := P_{\text{cl}}/\{\lambda + \tau(\lambda) \mid \lambda \in P_{\text{cl}} \}$.
    \item If $a_{i,\tau(i)} = 2$, then
    \begin{align*}
      &\beta_i(b) = \begin{cases}
        \varepsilon_i(b) + 1 & \text{ if } |s_i| \leq \varphi_i(b) \text{ and } s_i - \varphi_i(b) \text{ is odd}, \\
        |s_i| - \langle h_i,\operatorname{wt}(b) \rangle & \text{ if } |s_i| > \varphi_i(b), \\
        \varepsilon_i(b) & \text{ if } |s_i| \leq \varphi_i(b) \text{ and } s_i - \varphi_i(b) \text{ is even},
      \end{cases} \\
      &\tilde{B}_i b = \begin{cases}
        \tilde{F}_i b & \text{ if } |s_i| \leq \varphi_i(b) \text{ and } s_i - \varphi_i(b) \text{ is odd}, \\
        \mathrm{sgn}(s_i)b & \text{ if } |s_i| > \varphi_i(b), \\
        \tilde{E}_i b & \text{ if } |s_i| \leq \varphi_i(b) \text{ and } s_i - \varphi_i(b) \text{ is even},
      \end{cases} \\
  \end{align*}
  where $\mathrm{sgn}(n)$ denotes the sign of $n \in \mathbb{Z}\setminus\{0\}$.
  \item If $a_{i,\tau(i)} = 0$, then
  \begin{align*}
    &\beta_i(b) = \max(\varphi_i(b), \varphi_{\tau(i)}(b)) - \langle h_{\tau(i)}, \operatorname{wt}(b) \rangle, \\
    &\tilde{B}_ib = \begin{cases}
      \tilde{F}_i b & \text{ if } \varphi_i(b) > \varphi_{\tau(i)}(b), \\
      \tilde{E}_{\tau(i)} b & \text{ if } \varphi_i(b) \leq \varphi_{\tau(i)}(b).
    \end{cases}
  \end{align*}
  \item If $a_{i,\tau(i)} = -1$, then
  $$
  \beta_i(b) = \max(\varphi_i(b), \varphi_{\tau(i)}(b)+s_i) - s_i - \langle h_{\tau(i)}, \operatorname{wt}(b) \rangle.
  $$
  \begin{itemize}
    \item When $\varphi_i(b) > \varphi_{\tau(i)}(b) + s_i$,
    $$
    \tilde{B}_i b = \begin{cases}
      \frac{1}{\sqrt{2}} \tilde{F}_i b & \text{ if } \varphi_i(b) = \varphi_{\tau(i)}(b) + s_i + 1 \text{ and } \varphi_{\tau(i)}(\tilde{F}_i b) = \varphi_{\tau(i)}(b)+1, \\
      \tilde{F}_i b & \text{ otherwise}.
    \end{cases}
    $$
    \item When $\varphi_i(b) \leq \varphi_{\tau(i)}(b) + s_i$,
    $$
    \tilde{B}_i b = \begin{cases}
      \frac{1}{\sqrt{2}} \tilde{E}_{\tau(i)} b & \text{ if } \varphi_i(b) = \varphi_{\tau(i)}(b)+s_i \text{ and } \varphi_i(\tilde{E}_{\tau(i)} b) = \varphi_i(b), \\
      \frac{1}{\sqrt{2}}(\tilde{E}_{\tau(i)}b + \tilde{F}_i b) & \text{ if } \varphi_i(b) = \varphi_{\tau(i)}(b)+s_i > (-s_{\tau(i)})_+ \text{ and } \varphi_i(\tilde{E}_{\tau(i)}b) = \varphi_i(b)-1, \\
      \tilde{E}_{\tau(i)} b & \text{ otherwise}.
    \end{cases}
    $$
  \end{itemize}
  \end{itemize}
\end{proposition}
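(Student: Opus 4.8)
My plan is to obtain the proposition as a direct specialization of the general structure theorem \cite[Corollary 5.2.2]{Wa2}. That corollary equips the crystal basis of a suitable $\U$-module, under the standing assumptions (A1)--(A5), with an $\imath$crystal structure, and it records the maps $\mathrm{wt}^\imath$, $\beta_i$, $\tilde{B}_i$ in terms of the underlying $\U$-crystal data $\mathrm{wt}$, $\varepsilon_i$, $\varphi_i$, $\tilde{E}_i$, $\tilde{F}_i$ together with the parameters $\varsigma_i$, $\kappa_i$ (and the auxiliary integers $s_i$). Thus the first step is to verify that a KR crystal $\B$ falls within the scope of that corollary: by \cite{OS} the KR module $\W_s$ carries a $\U$-crystal basis $(\mathcal{L},\B)$, and restricting the $\U$-action along $\U^\imath\hookrightarrow\U$ places $\B$ in exactly the setting the corollary treats. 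Here I would also keep track of the $q\mapsto q^{-1}$ convention announced for crystal bases, so that the $\imath$crystal limit is taken consistently (here $q\to\infty$).

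Next I would unwind $B_i$ under our hypotheses. Since (A1) forces $I_\bullet=\emptyset$, we have $w_\bullet=e$ and $T_{w_\bullet}=\mathrm{id}$, so \eqref{B_i} collapses to $B_i=F_i+\varsigma_iE_{\tau(i)}K_i^{-1}+\kappa_iK_i^{-1}$. The crystal-limit behavior of $\tilde{B}_i$ is then controlled entirely by how $i$ and $\tau(i)$ interact, i.e.\ by the value $a_{i,\tau(i)}\in\{2,0,-1\}$ isolated in (A2). This trichotomy is precisely the case division in the statement, and (A3), (A4), (A5) pin down $\varsigma_i$, $\kappa_i$, $s_i$ in the respective cases. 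The remaining work is to substitute these normalizations into the corollary's formulas and read off the displayed expressions.

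For $a_{i,\tau(i)}=2$ we have $\tau(i)=i$ and $B_i=F_i+q_i^{-1}E_iK_i^{-1}+[s_i]_iK_i^{-1}$ is of $\imath\mathfrak{sl}_2$ type; feeding this into the corollary yields the three-way split governed by comparing $|s_i|$ with $\varphi_i(b)$ and by the parity of $s_i-\varphi_i(b)$. For $a_{i,\tau(i)}=0$ the summands $F_i$ and $E_{\tau(i)}$ act in independent directions, and with $\varsigma_i=1$, $\kappa_i=0$ the dominant term is selected by comparing $\varphi_i(b)$ with $\varphi_{\tau(i)}(b)$, giving the stated $\tilde{F}_i$/$\tilde{E}_{\tau(i)}$ dichotomy. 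For $a_{i,\tau(i)}=-1$, with $\varsigma_i=q_i^{s_i}$ and $\varsigma_i\varsigma_{\tau(i)}=q_i$, the two summands $F_i$ and $E_{\tau(i)}K_i^{-1}$ genuinely compete at the crystal limit, which I expect to be the origin of the $\tfrac{1}{\sqrt2}$ normalizations and of the balanced vector $\tfrac{1}{\sqrt2}(\tilde{E}_{\tau(i)}b+\tilde{F}_ib)$.

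I expect the $a_{i,\tau(i)}=-1$ case to be the main obstacle: unlike the other two, no single monomial dominates, so one must compare the leading coefficients of $\tilde{F}_ib$ and $\tilde{E}_{\tau(i)}b$ along $\tau$-linked $i$-strings and detect exactly the boundary $\varphi_i(b)=\varphi_{\tau(i)}(b)+s_i$ (and its refinement via $\varphi_i(\tilde{E}_{\tau(i)}b)$) where the two contributions agree in order. Since this delicate analysis is already carried out in the proof of \cite[Corollary 5.2.2]{Wa2}, here it reduces to checking that the parameter choices (A3)--(A5) reproduce precisely those boundary conditions, after which the formulas for $\beta_i$ and $\tilde{B}_i$ follow by substitution.
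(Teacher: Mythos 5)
Your proposal follows exactly the paper's route: the paper gives no independent argument, stating only that the proposition is a special case of \cite[Corollary 5.2.2]{Wa2}, and your plan of verifying that a KR crystal lies in the scope of that corollary and then substituting the parameter normalizations (A3)--(A5) is just a fleshed-out version of that same citation. The specialization details you supply (including the discussion of the $a_{i,\tau(i)}=-1$ case) are consistent with the setup and add nothing that conflicts with the paper.
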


\section{Existence of the combinatorial $K$-matrix}

In this section, we define the notion of combinatorial $K$-matrix in a similar way to that of combinatorial $R$-matrix.
Then, we show that each combinatorial $K$-matrix satisfies the set-theoretical reflection equation.
After that, we give an interpretation of the combinatorial $K$-matrix from a viewpoint of $\imath$crystals.

\subsection{Definition}
Let $\W_s$ be a KR module, $x$ a formal spectral parameter, and $K_{\W_s}(x): \W_s(x) \rightarrow \W_s^*(x^{-1})$ the quantum $K$-matrix.
Recall that $*=\emptyset,\vee$ depending on the Satake diagram we consider.
Let $(\mathcal{L}_s, \B_s)$ denote the crystal basis of $\W_s$.
Set $\mathcal{L}_s(x) := \mathcal{L}_s \otimes_{\mathbb{C}} \mathbb{C}(x)$ and $\B_s(x) := \{ b \otimes x^d \mid b \in \B_s,\ d \in \mathbb{Z} \}$.
We often write $x^d b$ instead of $b \otimes x^d$.
As seen in section \ref{subsec:quantum K}, matrix coefficients of our quantum $K$-matrix $K_{\W_s}(x)$ are rational functions in $q,x$. Hence, by multiplying a suitable power of $q$, one can normalize it in a way such that $K_{\W_s}(x)(\mathcal{L}_s(x)) \subset \mathcal{L}^*_s(x^{-1})$ and the induced $\mathbb{C}(x)$-linear map $\bar{K}_{\W_s}(x): \mathcal{L}_s(x)/q^{-1} \mathcal{L}_s(x) \rightarrow \mathcal{L}^*_s(x^{-1})/q^{-1} \mathcal{L}^*_s(x^{-1})$ is not zero.
Set $\K_{\B_s}(x) = \bar{K}_{\W_s}(x)|_{\B_s(x)}$. If $\K_{\B_s}(x)$ gives rise to a bijection $\B_s(x) \rightarrow \B^*_x(x^{-1})$, then we call it a \emph{combinatorial $K$-matrix} on $\B_s$.

\begin{lemma}
If a combinatorial $K$-matrix exists, then it is unique up to multiplication of $x^d$ for some $d \in \mathbb{Z}$.
\end{lemma}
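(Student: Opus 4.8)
The plan is to trace the non-uniqueness back to the scalar ambiguity of the underlying quantum $K$-matrix and to show that, after passing to the crystal limit, this ambiguity can only be a power of $x$. Suppose $\K_{\B_s}(x)$ and $\K'_{\B_s}(x)$ are two combinatorial $K$-matrices on $\B_s$, arising as the crystal limits of quantum $K$-matrices $K_{\W_s}(x)$ and $K'_{\W_s}(x)$ respectively. Both of the latter are $\U^\imath$-linear maps $\W_s(x)\to\W_s^*(x^{-1})$, so the first step is to compare them inside $\mathrm{Hom}_{\U^\imath}(\W_s(x),\W_s^*(x^{-1}))$.

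The essential input is the uniqueness, up to a scalar $f(q,x)\in\C(q,x)^\times$, of the quantum $K$-matrix as a $\U^\imath$-intertwiner $\W_s(x)\to\W_s^*(x^{-1})$. This is where the representation theory enters, via irreducibility of the modules involved together with a Schur-type argument (or, equivalently, the universal $K$-matrix formalism), yielding $K'_{\W_s}(x)=f(q,x)\,K_{\W_s}(x)$. I expect the careful justification of this one-dimensionality to be the main obstacle, since it is the only genuinely representation-theoretic ingredient; everything afterward is crystal-theoretic bookkeeping.

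Next I would analyze the normalization. By the definition of a combinatorial $K$-matrix, both $K_{\W_s}(x)$ and $K'_{\W_s}(x)$ have been rescaled by suitable powers of $q$ so that they preserve the lattice $\mathcal{L}_s(x)$ and induce nonzero $\C(x)$-linear maps modulo $q^{-1}$. Consequently the scalar $f(q,x)$, adjusted by the relevant power of $q$, has a finite nonzero limit $g(x)\in\C(x)^\times$ as $q\to\infty$, and the induced maps satisfy $\bar{K}'_{\W_s}(x)=g(x)\,\bar{K}_{\W_s}(x)$. Restricting to the crystal basis then gives $\K'_{\B_s}(x)=g(x)\,\K_{\B_s}(x)$ as maps on $\B_s(x)$.

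Finally I would use bijectivity to pin down $g(x)$. Every element of $\B_s(x)$ is a monomial $x^d b$ with $b\in\B_s$, and both $\K_{\B_s}(x)$ and $\K'_{\B_s}(x)$ send it to an element of $\B^*_s(x^{-1})$, which is again a monomial in $x$ with coefficient $1$. Since $\K'_{\B_s}(x)(x^d b)=g(x)\,\K_{\B_s}(x)(x^d b)$ must itself be such a unit-coefficient monomial, and since $g(x)$ is a single global scalar, $g(x)$ is forced to equal $x^{n}$ for one integer $n$ independent of the input. Therefore $\K'_{\B_s}(x)=x^{n}\,\K_{\B_s}(x)$, which is precisely uniqueness up to multiplication by $x^d$.
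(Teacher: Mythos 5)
Your overall strategy matches the paper's proof in its decisive steps: reduce to a scalar relation $K_2=fK_1$ between the two quantum $K$-matrices, observe that lattice preservation and nonvanishing modulo $q^{-1}$ force $f$ to lie in $\C[\![q^{-1}]\!](x)\cap\C(q,x)$ with constant term $f_0\in\C(x)$, and then use the fact that both maps send each $b\in\B_s$ to a single element of $\B_s^*(x^{-1})$ --- a basis vector times an integer power of $x$ with coefficient $1$ --- to conclude $f_0=x^d$. Where you diverge is at the very first step. In the paper, a combinatorial $K$-matrix is by definition obtained from a normalization of the one fusion-constructed quantum $K$-matrix $K_{\W_s}(x)$; hence two combinatorial $K$-matrices automatically arise from scalar multiples of the same intertwiner, and no Schur-type or irreducibility argument is needed. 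The step you single out as ``the main obstacle,'' namely one-dimensionality of $\mathrm{Hom}_{\U^\imath}(\W_s(x),\W_s^*(x^{-1}))$, is therefore vacuous in the paper's setting --- which is just as well, since you do not actually prove it, and the paper nowhere establishes (generic) irreducibility of $\W_s(x)$ as a $\U^\imath$-module. If you intended the lemma in the stronger sense of uniqueness among all bijections induced by arbitrary $\U^\imath$-intertwiners, that one-dimensionality claim would be a genuine gap left to fill; as the lemma is actually stated and the notion is actually defined, your argument goes through once that superfluous first step is deleted.
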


\begin{proof}
Let $K_1,K_2$ be normalizations of $K_{\W_s}$ which give rise to combinatorial $K$-matrices $\K_1, \K_2$.
Then, there exists a unique $f \in \mathbb{C}[\![q^{-1}]\!](x) \cap \mathbb{C}(q,x)$ such that $K_2 = f K_1$.
Hence, we have
\begin{align}
\K_2 = f_0 \K_1, \label{eq: K2 = f K1}
\end{align}
where $f_0 \in \mathbb{C}(x)$ denotes the constant term of $f$.
In order to prove the assertion, we need to show that $f_0 = x^d$ for some $d \in \mathbb{Z}$.

Let $b \in \B_s$.
Then, we have $\K_1(b), \K_2(b) \in \B_s^*(x^{-1})$.
On the other hand, by equation \eqref{eq: K2 = f K1}, we have
$$
\K_2(b) = f_0 \K_1(b).
$$
Since $\B_s^*$ is a basis of the $\mathbb{C}$-vector space $\mathcal{L}(\W_s^*)/q^{-1}\mathcal{L}(\W_s^*)$, the fact that both $\K_1(b)$ and $f_0 \K_1(b)$ belong to $\B_s^*(x^{-1})$ implies that $f_0 = x^d$ for some $d \in \mathbb{Z}$.
Thus, the proof completes.
\end{proof}

\subsection{Set-theoretical reflection equation}
Let $\W_s, \W'_{s'}$ be KR modules, and $\B_s, \B'_{s'}$ their crystal bases.
Assume that the combinatorial $K$-matrices $\K_{\B_s}(x), \K_{\B'_{s'}}(y)$ exist.

\begin{proposition}\label{prop: comb K satisfy set reflection eq}
The combinatorial $K$-matrices $\K_{\B_s}(x)$ and $\K_{\B'_{s'}}(y)$ satisfy the set-theoretical reflection equation
\begin{equation} \label{comb RE}
\mathcal{R}_{\B^{\prime *}_{s'}, \B^*_s}(y^{-1}, x^{-1}) \K_{\B'_{s'}}(y) \mathcal{R}_{\B^*_s, \B'_{s'}}(x^{-1}, y) \K_{\B_s}(x) = \K_{\B_s}(x) \mathcal{R}_{\B^{\prime *}_{s'}, \B_s}(y^{-1}, x) \K_{\B'_{s'}}(y) \mathcal{R}_{\B_s, \B'_{s'}}(x,y).
\end{equation}
\end{proposition}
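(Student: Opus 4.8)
The plan is to deduce the set-theoretical reflection equation \eqref{comb RE} as the crystal limit ($q\to\infty$) of the quantum reflection equation for KR modules, Proposition~\ref{prop: quantum K satisfy refl eq}(ii), exactly as the set-theoretical Yang-Baxter equation was obtained from \eqref{YBeq}. The key point is that every operator appearing in the quantum reflection equation has already been normalized so as to preserve the relevant crystal lattices and to induce a nonzero map on the quotient by $q^{-1}$. For the quantum $R$-matrices this is the standard fact underlying the existence of the combinatorial $R$-matrix $\mathcal{R}_{\B_s,\B'_{s'}}$; for the quantum $K$-matrices it is precisely the normalization carried out in the definition of $\K_{\B_s}(x)$ and $\K_{\B'_{s'}}(y)$, which we are assuming to exist.

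First I would fix normalized representatives $K_{\W_s}(x)$, $K_{\W'_{s'}}(y)$ and normalized quantum $R$-matrices for all the pairs of modules occurring in \eqref{comb RE}, each chosen to preserve the appropriate $\mathbf{A}$-lattices $\mathcal{L}_s(x)$ (and their starred/dual versions) and to reduce to a nonzero map modulo $q^{-1}$. Composing lattice-preserving maps preserves lattices, so both sides of the quantum reflection equation map $\mathcal{L}_s(x)\ot\mathcal{L}'_{s'}(y)$ into $\mathcal{L}^*_s(x^{-1})\ot\mathcal{L}'^*_{s'}(y^{-1})$. I would then reduce the quantum reflection equation modulo $q^{-1}$: since the $\U$-linear identity in Proposition~\ref{prop: quantum K satisfy refl eq}(ii) holds over $\C(q)$, its image in $\mathcal{L}/q^{-1}\mathcal{L}$ is a genuine identity of $\C$-linear (indeed $\C(x,y)$-linear) maps, and on the crystal basis it becomes exactly \eqref{comb RE} with $\mathcal{R}$ and $\K$ in place of the quantum operators.

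The one genuine subtlety, and the step I expect to be the main obstacle, is the bookkeeping of normalization scalars. Each quantum operator is defined only up to a scalar in $\C(q,x,y)$, and the product of scalars on the two sides of \eqref{comb RE} need not match a priori. Because the two sides of the quantum reflection equation are literally equal as $\U$-linear maps (not merely proportional), the overall scalars do agree; but to pass cleanly to the limit I must check that each factor is normalized so that its leading ($q\to\infty$) coefficient on the crystal basis is $1$ rather than some power of $q$, so that no stray power of $q$ survives and no factor degenerates to $0$ in the quotient. Concretely I would argue that the ratio $f$ between the left-hand composite and the right-hand composite lies in $\C[\![q^{-1}]\!](x,y)\cap\C(q,x,y)$ with constant term a power of $x$ and $y$ (as in the uniqueness Lemma), and then fix the normalizations of the two $K$-factors so that this constant term is $1$; the Yang-Baxter compatibility among the $R$-matrices guarantees their scalars cancel consistently.

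Once the scalars are controlled, the remaining verification is routine: both composites send each basis element $b\ot b'\in\B_s(x)\ot\B'_{s'}(y)$ to a single basis element of $\B^*_s(x^{-1})\ot\B'^*_{s'}(y^{-1})$ (by the bijectivity of the combinatorial $K$- and $R$-matrices), and equality of the two quantum composites forces equality of these images, which is the set-theoretical reflection equation. I would conclude by noting that the dual crystals $\B^*$ appearing here are the KR crystals of the dual modules $\W^\vee_s$, so that all the $\mathcal{R}$- and $\K$-factors in \eqref{comb RE} are defined on honest KR crystals and the statement is well posed.
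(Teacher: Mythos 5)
Your proposal is correct and follows essentially the same route as the paper, which simply observes that the assertion follows from Proposition~\ref{prop: quantum K satisfy refl eq} by passing to the crystal limit. The paper's proof is a single sentence, so your more careful treatment of the lattice preservation and normalization scalars supplies detail the paper leaves implicit rather than diverging from it.
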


\begin{proof}
The assertion is clear from Proposition \ref{prop: quantum K satisfy refl eq}.
\end{proof}

\subsection{$\imath$Crystal theoretical viewpoint}
Let $\W_s$ be a KR module and $\B_s$ its crystal basis.
Assume that the combinatorial $K$-matrix $\K_{\B_s}(x)$ exists.

\begin{proposition}\label{prop: comb K is icry isom}
The combinatorial $K$-matrix is an isomorphism of $\imath$crystals.
\end{proposition}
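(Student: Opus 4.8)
The plan is to reduce the statement to two inputs: first, that the quantum $K$-matrix $K_{\W_s}(x)\colon\W_s(x)\to\W_s^*(x^{-1})$ is a homomorphism of $\U^\imath$-modules, and second, that the entire $\imath$crystal structure $(\mathrm{wt}^\imath,\beta_i,\tilde{B}_i)$ on a KR crystal is, by \cite{Wa2}, nothing but the $q\to\infty$ limit of the $\U^\imath$-module structure carried by the crystal lattice modulo $q^{-1}$. For a fundamental module, the $\U^\imath$-linearity of $K_\W(x)$ is the defining property of the boundary intertwiner in quantum symmetric pair theory, and the reflection equation \eqref{refl eq} is its consequence. For general $s$, the fusion formula \eqref{rec for K} writes $K_{\W_s}(x)$ as a composite of fundamental $K$-matrices, which are $\U^\imath$-linear, and quantum $R$-matrices $R^*_{i,i+1}$, which are $\U$-linear and hence compatible with the right coideal action; tracking this coideal structure through the composition shows that $K_{\W_s}(x)$ intertwines $\U^\imath$.

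With $\U^\imath$-linearity in hand, the argument becomes a functoriality statement for the $\imath$crystal limit. By the normalization built into the definition of the combinatorial $K$-matrix, $K_{\W_s}(x)$ preserves the crystal lattices and $\K_{\B_s}(x)$ is the induced $\C(x)$-linear map on $\mathcal{L}_s(x)/q^{-1}\mathcal{L}_s(x)$. The operators $\tilde{B}_i$ are, by construction, the $\C$-linear operators induced on this quotient by the generators $B_i$ of \eqref{B_i}, while $\mathrm{wt}^\imath$ and $\beta_i$ record the residual $\U^\imath$-weight and $B_i$-string data; crucially, all of these are intrinsic to the $\U^\imath$-action and not to the ambient $\U$-action (indeed $K_{\W_s}(x)$ does \emph{not} preserve the individual $\U$-crystal data $\vep_i,\vphi_i,\operatorname{wt}$). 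Consequently a lattice-preserving $\U^\imath$-linear map descends to a $\C(x)$-linear map commuting with every $\tilde{B}_i$ and preserving $\mathrm{wt}^\imath$ and $\beta_i$. Thus $\K_{\B_s}(x)$ is a morphism of $\imath$crystals, and since it is a bijection by the standing hypothesis that the combinatorial $K$-matrix exists, it is an isomorphism.

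The step I expect to be the main obstacle is making precise that this descent is genuinely functorial in the delicate cases $a_{i,\tau(i)}=2$ and $a_{i,\tau(i)}=-1$, where $\tilde{B}_i$ is not a single Kashiwara operator but outputs $\mathrm{sgn}(s_i)b$ or a normalized combination such as $\tfrac{1}{\sqrt{2}}(\tE_{\tau(i)}b+\tF_i b)$. One must check that the $\C$-linear map induced by $K_{\W_s}(x)$ commutes with these operators on $\mathcal{L}_s(x)/q^{-1}\mathcal{L}_s(x)$ as honest $\C$-linear operators, respecting the signs and the $\tfrac{1}{\sqrt{2}}$ factors, and that these outputs transport correctly under the dualization $*$ on the target $\W_s^*(x^{-1})$. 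This compatibility is exactly what the $\imath$crystal basis machinery of \cite{Wa2} provides for $\U^\imath$-linear maps between modules admitting $\imath$crystal bases; once it is invoked together with the $\U^\imath$-linearity established above, the conclusion follows with no further computation.
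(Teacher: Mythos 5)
Your proposal is correct and takes essentially the same approach as the paper: the paper's own proof is literally ``Immediate from the definitions,'' and your argument is exactly the unpacking of that remark --- $\U^\imath$-linearity of the fused quantum $K$-matrix (fundamental $K$-matrices intertwine $\U^\imath$, the $R$-matrices are $\U$-linear, and the right-coideal property makes the composite $\U^\imath$-linear), together with the fact that the structure maps $\mathrm{wt}^\imath,\beta_i,\tilde{B}_i$ of \cite{Wa2} are intrinsic to the $\U^\imath$-action on the crystal lattice, so a lattice-preserving $\U^\imath$-intertwiner descends to an $\imath$crystal morphism, which is an isomorphism since it is bijective by hypothesis. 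Your closing caveat about the cases $a_{i,\tau(i)}=2$ and $a_{i,\tau(i)}=-1$ is the right place to be careful, and invoking the $\imath$crystal basis machinery of \cite{Wa2} there is precisely what the paper implicitly does.
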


\begin{proof}
Immediate from the definitions.
\end{proof}

By Propositions \ref{prop: comb K satisfy set reflection eq} and \ref{prop: comb K is icry isom}, we see that when finding the combinatorial $K$-matrix, it would be meaningful to find an $\imath$crystal isomorphism from $\B_s(x)$ to $\B_s^*(x^{-1})$.
Indeed, as we will see below, in some cases, we can obtain the combinatorial $K$-matrix explicitly in this way without knowing the quantum $K$-matrix.
This can be seen as a natural generalization of the fact that the combinatorial $R$-matrix can be constructed from the tensor product of two KR crystals, without knowing the quantum $R$-matrix.
This fact suggests that there is a general theory which ensures the existence of combinatorial $K$-matrix by means of $\imath$crystals.

\section{Type A case} \label{sec:type A}
  In this section, we consider the quantum affine algebra of type $A^{(1)}_{n-1}$ with 
$n \geq 3$, the simplest family of KR modules $\W_s$ and its dual $\W_s^\vee$, and investigate
quantum/combinatorial $K$-matrices for all quasi-split cases.

\subsection{The simplest KR modules and their $R$-matrices}
  The fundamental module $\W$ associated to the first level $0$ fundamental weight
$\varpi_1$ is the vector representation $\W = \bigoplus_{i=1}^n \mathbb{C}(q)v_i$.
Since $-w_0\varpi_1=\varpi_{n-1}$, its dual representation $\W^\vee = \bigoplus_{i=1}^n \mathbb{C}(q) v^\vee_i$ has a highest weight $\varpi_{n-1}$.
The actions of Chevalley generators on $\W$ and $\W^\vee$ are given as follows.
\begin{align*}
&E_iv_j=\delta_{i+1,j}v_i,\quad F_iv_j=\delta_{ij}v_{i+1},\quad K_iv_j=q^{\delta_{ij}-\delta_{i+1,j}}v_j,\\
&E_iv_j^\vee=\delta_{ij}v_{i+1}^\vee,\quad F_iv_j^\vee=\delta_{i+1,j}v_i^\vee,\quad K_iv_j^\vee=q^{-\delta_{ij}+\delta_{i+1,j}}v_j^\vee.
\end{align*}
In this section, indices $i,j$ like in the above should be considered in $\Z/n\Z$.
The crystal basis of $\W$ is $\B = \{ b_1,b_2,\ldots,b_n \}$ and its crystal 
graph is given as follows:
  $$
  \xymatrix{
    b_1 \ar[r]^{1} & b_2 \ar[r]^{2} & \cdots \ar[r]^{n-1} & b_n \ar@(dr,dl)[lll]^{0}
  }
  $$
The crystal basis of $\W^\vee$ is $\B^\vee := \{ b^\vee_1,b^\vee_2,\ldots,b^\vee_n \}$ and its crystal graph is obtained from that of $\B$ by reversing the arrows:
  $$
  \xymatrix{
    b^\vee_1 \ar@(dl,dr)[rrr]_{0} & b^\vee_2 \ar[l]_{1} & \cdots \ar[l]_{2} & b^\vee_n \ar[l]_{n-1}
  }
  $$

The quantum $R$-matrices among $\W$ and $\W^\vee$ are given as follows \cite{DO}.
\begin{align*}
R_{\W,\W}(x,y)&=\sum_iE_{ii}\ot E_{ii}+\sum_{i\ne j}\frac{1-q^2}{1-q^2z}z^{\chi(i<j)}
E_{ii}\ot E_{jj}+\sum_{i\ne j}q\frac{1-z}{1-q^2z}E_{ij}\ot E_{ji},\\
R_{\W^\vee,\W}(x,y)&=\sum_{i\ne j}(-q)^{i-j-1}((-q)^nz)^{\chi(i<j)}E_{ij}\ot E_{ij}
+\sum_{i,j}\frac{q^{\delta_{ij}}-q^{-\delta_{ij}}(-q)^nz}{1-q^2}E_{ij}\ot E_{ji},\\
R_{\W^\vee,\W^\vee}(x,y)&=\sum_iE_{ii}\ot E_{ii}+\sum_{i\ne j}\frac{1-q^2}{1-q^2z}z^{\chi(i>j)}
E_{ii}\ot E_{jj}+\sum_{i\ne j}q\frac{1-z}{1-q^2z}E_{ij}\ot E_{ji}.
\end{align*}
Here $z=x/y$ and $E_{ij}$ is the linear operator such that $E_{ij}v_j^*=v_i^{*'}$ when it belongs to 
$\mathrm{Hom}(\W^*,\W^{*'})$ ($*,*'=\emptyset,\vee$).

Next we describe $\W_s$ obtained from $\W$ by fusion construction. The linear space
$\W^{\ot s}$ has a standard basis $\{v_{\mathbf{i}}\mid \mathbf{i}\in\{1,\ldots,n\}^s\}$. Here,
for $\mathbf{i}=(i_1,\ldots,i_s)$, we set $v_{\mathbf{i}}=v_{i_1}\ot\cd\ot v_{i_s}$. Define the weight of $\mathbf{i}$
$\mathrm{wt}(\mathbf{i})$ by
$\alpha=(\alpha_1,\ldots,\alpha_n)$ where $\alpha_i$ is the number of $i$'s in $\mathbf{i}$.
For $\alpha\in\{(\alpha_1,\ldots,\alpha_n)\mid \alpha_i\ge0,\sum_{i=1}^n\alpha_i=s\}$, define
\[
v_\alpha=\sum_{\mathrm{wt}(v_{\mathbf{i}})=\alpha} q^{-\tau(\mathbf{i})}v_{\mathbf{i}},
\qquad \tau(\mathbf{i})=\sharp\{(p,q)\mid1\le p<q\le n,i_p>i_q\}.
\]
Then $\{v_\alpha\mid \alpha_i\ge0,\sum_{i=1}^n\alpha_i=s\}$ turns out a basis
of the KR module $\W_s$ defined in \eqref{W_s}. The actions of Chevalley generators are
given by
\[
E_iv_\alpha=[\alpha_i+1]v_{\alpha+\eb_i-\eb_{i+1}},\quad 
F_iv_\alpha=[\alpha_{i+1}+1]v_{\alpha-\eb_i+\eb_{i+1}},\quad
K_iv_\alpha=q^{\alpha_i-\alpha_{i+1}}v_{\alpha}.
\]
Here $\eb_i$ is the standard basis vector and subscripts of $\alpha$ should be considered
modulo $n$. If $\alpha$ contains a negative integer upon application, then $v_\alpha$ 
should be considered as 0.

Similarly, one can realize a basis of $\W_s^\vee$ as a linear combination of $v_{\mathbf{i}}^\vee
=v_{i_1}^\vee\ot\cdots\ot v_{i_s}^\vee\in\W^{\vee\ot s}$. Set
\[
v_\alpha^\vee=\sum_{\mathrm{wt}(\mathbf{i})=\alpha} q^{-\bar{\tau}(\mathbf{i})}v_{\mathbf{i}}^\vee,
\qquad \bar{\tau}(\mathbf{i})=\sharp\{(p,q)\mid1\le p<q\le n,i_p<i_q\}.
\]
$\{v_\alpha^\vee\mid \alpha_i\ge0,\sum_{i=1}^n\alpha_i=s\}$ turns out a basis
of $\W_s^\vee$. The actions of Chevalley generators are
given by
\[
E_iv_\alpha^\vee=[\alpha_{i+1}+1]v_{\alpha-\eb_i+\eb_{i+1}}^\vee,\quad 
F_iv_\alpha^\vee=[\alpha_i+1]v_{\alpha+\eb_i-\eb_{i+1}}^\vee,\quad
K_iv_\alpha^\vee=q^{-\alpha_i+\alpha_{i+1}}v_{\alpha}^\vee.
\]

We also review the piece-wise linear formulas for the combinatorial $R$-matrices.
See e.g. \cite{KOY}, but note that the tensor product rule in this paper is opposite to it.
\begin{align*}
&\mathcal{R}_{\B_s,\B_{s'}}(x,y)(x^db_\alpha\ot y^eb_\beta)
=y^{e+Q_0(\alpha,\beta)}b_{\beta'}\ot x^{d-Q_0(\alpha,\beta)}b_{\alpha'} \\
&\qquad\beta'_i=\beta_i+Q_i(\alpha,\beta)-Q_{i-1}(\alpha,\beta),\quad
\alpha'_i=\alpha_i+Q_{i-1}(\alpha,\beta)-Q_i(\alpha,\beta), \\
&\mathcal{R}_{\B_s^\vee,\B_{s'}}(x,y)(x^db_\alpha^\vee\ot y^eb_\beta)
=y^{e+Q_0(\alpha,\beta)}b_{\beta'}\ot x^{d-Q_0(\alpha,\beta)}b_{\alpha'}^\vee \\
&\qquad\beta'_i=\beta_i+P_i(\alpha,\beta)-P_{i-1}(\alpha,\beta),\quad
\alpha'_i=\alpha_i+P_i(\alpha,\beta)-P_{i-1}(\alpha,\beta), \\
&\mathcal{R}_{\B_s^\vee,\B_{s'}^\vee}(x,y)(x^db_\alpha^\vee\ot y^eb_\beta^\vee)
=y^{e+Q_0(\beta.\alpha)}b_{\beta'}^\vee\ot x^{d-Q_0(\beta,\alpha)}b_{\alpha'}^\vee \\
&\qquad\beta'_i=\beta_i+Q_{i-1}(\beta,\alpha)-Q_i(\beta,\alpha),\quad
\alpha'_i=\alpha_i+Q_i(\alpha,\beta)-Q_{i-1}(\alpha,\beta),
\end{align*}
where $Q_i,P_i$ are given by
\[
Q_i(\alpha,\beta)=\min_{1\le k\le n}
(\sum_{j=k+1}^n\alpha_{i+j}+\sum_{k=1}^{k-1}\beta_{i+j}),\qquad
P_i(\alpha,\beta)=\min(\alpha_{i+1},\beta_{i+1}).
\]

Let $\mathcal{L}_s^*$ ($* \in \{ \emptyset, \vee \}$) denote the crystal lattice of $\W_s$ spanned by $v_{\alpha}$'s.
Set $b^*_\alpha := v_\alpha + q^{-1}\mathcal{L}^*_s$.
Then, $\B^*_s := \{ b^*_\alpha \}$ forms the crystal basis.
We write $\alpha$ instead of $b_\alpha$ if no confusion can occur.
We summarize the crystal and $\imath$crystal structure below. 
Note that for the parameters $s_i$ in (A3) in section \ref{subsec:icrystals}, we set $s_i=0$.
The crystal structure of $\B_s(x) = \{ x^d b_\alpha \mid d \in \mathbb{Z},\ b_\alpha \in \B_s \}$ is as follows:
\begin{align*}
  &\langle h_i, \operatorname{wt}(x^d b_\alpha) \rangle = \alpha_i-\alpha_{i+1}, \\
  &\varepsilon_i(x^d b_\alpha) = \alpha_{i+1}, \\
  &\varphi_i(x^d b_\alpha) = \alpha_i, \\
  &\tilde{E}_i (x^d b_\alpha) = x^{d+\delta_{i,0}} b_{\alpha+\mathbf{e}_i-\mathbf{e}_{i+1}}, \\
  &\tilde{F}_i (x^d b_\alpha) = x^{d-\delta_{i,0}} b_{\alpha-\mathbf{e}_i+\mathbf{e}_{i+1}}.
\end{align*}
Then, the $\imath$crystal structure of $\B_s(x)$ is described as follows (below, $a_{i, \tau(i)} = -1$ occurs only when $n = 2n'+1$ for some $n' \in \mathbb{Z}_{> 0}$ and $i = n',n'+1$):
\begin{itemize}
  \item When $a_{i,\tau(i)} = 2$.
  \begin{align*}
    &\beta_i(x^d b_\alpha) = \alpha_{i+1} + \theta(\alpha_i), \\
    &\tilde{B}_i (x^d b_\alpha) = x^{d+(-1)^{\theta(\alpha_i)}} b_{\alpha + (-1)^{\theta(\alpha_i)}(\mathbf{e}_i - \mathbf{e}_{i+1})}.
  \end{align*}
  \item When $a_{i,\tau(i)} = 0$.
  \begin{align*}
    &\beta_i(x^d b_\alpha) = \begin{cases}
      \alpha_i-\alpha_{\tau(i)}+\alpha_{\tau(i)+1} & \text{ if } \alpha_i > \alpha_{\tau(i)}, \\
      \alpha_{\tau(i)+1} & \text{ if } \alpha_i \leq \alpha_{\tau(i)},
    \end{cases} \\
    &\tilde{B}_i (x^d b_\alpha) = \begin{cases}
      x^{d-\delta_{i,0}} b_{\alpha-\mathbf{e}_i+\mathbf{e}_{i+1}} & \text{ if } \alpha_i > \alpha_{\tau(i)}, \\
      x^{d+\delta_{\tau(i),0}} b_{\alpha+\mathbf{e}_{\tau(i)}-\mathbf{e}_{\tau(i)+1}} & \text{ if } \alpha_i \leq \alpha_{\tau(i)}.
    \end{cases}
  \end{align*}
  \item When $a_{i,\tau(i)} = -1$.
  \begin{align*}
    &\beta_i(x^d b_\alpha) = \begin{cases}
      \alpha_i - \alpha_{\tau(i)} + \alpha_{\tau(i)+1} - s_i, & \text{ if } \alpha_i > \alpha_{\tau(i)}+s_i \\
      \alpha_{\tau(i)+1} & \text{ if } \alpha_i \leq \alpha_{\tau(i)}+s_i,
    \end{cases}, \\
    &\tilde{B}_{n'}(x^d b_\alpha) = \begin{cases}
      \frac{1}{\sqrt{2}} x^d b_{\alpha-\mathbf{e}_{n'}+\mathbf{e}_{n'+1}} & \text{ if } \alpha_{n'} = \alpha_{n'+1} + s_{n'}+1, \\
      x^d b_{\alpha-\mathbf{e}_{n'}+\mathbf{e}_{n'+1}} & \text{ if } \alpha_{n'} > \alpha_{n'+1} + s_{n'}+1, \\
      \frac{1}{\sqrt{2}} x^d b_{\alpha+\mathbf{e}_{n'+1}-\mathbf{e}_{n'+2}} & \text{ if } \alpha_{n'} = \alpha_{n'+1} + s_{n'}, \\
      x^d b_{\alpha+\mathbf{e}_{n'+1}-\mathbf{e}_{n'+2}} & \text{ if } \alpha_{n'} < \alpha_{n'+1} + s_{n'}
    \end{cases}, \\
    &\tilde{B}_{n'+1}(x^d b_\alpha) = \begin{cases}
      x^d b_{\alpha-\mathbf{e}_{n'+1}+\mathbf{e}_{n'+2}} & \text{ if } \alpha_{n'+1} > \alpha_{n'} + s_{n'+1}, \\
      \frac{1}{\sqrt{2}} x^d( b_{\alpha+\mathbf{e}_{n'}-\mathbf{e}_{n'+1}} + b_{\alpha-\mathbf{e}_{n'+1}+\mathbf{e}_{n'+2}}) & \text{ if } \alpha_{n'+1} = \alpha_{n'} + s_{n'+1} > (-s_{n'})_+, \\
      x^d b_{\alpha+\mathbf{e}_{n'}-\mathbf{e}_{n'+1}} & \text{ otherwise}.
    \end{cases}
  \end{align*}
\end{itemize}
Also, the crystal and $\imath$crystal structure of $\B_s^\vee(x^{-1}) = \{ x^d b_\alpha^\vee \mid b_\alpha \in \B_s \}$ is as follows (below, we will not consider $\B^\vee_s(x^{-1})$ when we have $a_{i,\tau(i)} = -1$ for some $i$):
\begin{align*}
  &\langle h_i, \operatorname{wt}(x^d b_\alpha^\vee) \rangle = -\alpha_i+\alpha_{i+1}, \\
  &\varepsilon_i(x^d b_\alpha^\vee) = \alpha_i, \\
  &\varphi_i(x^d b_\alpha^\vee) = \alpha_{i+1}, \\
  &\tilde{E}_i (x^d b_\alpha^\vee) = x^{d-\delta_{i,0}}b^\vee_{\alpha-\mathbf{e}_i+\mathbf{e}_{i+1}}, \\
  &\tilde{F}_i (x^d b_\alpha^\vee) = x^{d+\delta_{i,0}} b^\vee_{\alpha+\mathbf{e}_i-\mathbf{e}_{i+1}}.
\end{align*}
\begin{itemize}
  \item When $a_{i,\tau(i)} = 2$.
  \begin{align*}
    &\beta_i(x^d b_\alpha^\vee) = \alpha_i + \theta(\alpha_{i+1}), \\
    &\tilde{B}_i (x^d b_\alpha^\vee) =  x^{d-(-1)^{\theta(\alpha_{i+1})}} b^\vee_{\alpha - (-1)^{\theta(\alpha_{i+1})}(\mathbf{e}_i - \mathbf{e}_{i+1})}.
  \end{align*}
  \item When $a_{i,\tau(i)} = 0$.
  \begin{align*}
    &\beta_i(x^d b_\alpha^\vee) = \begin{cases}
      \alpha_{i+1}+\alpha_{\tau(i)}-\alpha_{\tau(i)+1} & \text{ if } \alpha_{i+1} > \alpha_{\tau(i)+1}, \\
      \alpha_{\tau(i)} & \text{ if } \alpha_{i+1} \leq \alpha_{\tau(i)+1},
    \end{cases} \\
    &\tilde{B}_i (x^d b_\alpha^\vee) = \begin{cases}
      x^{d+\delta_{i,0}} b^\vee_{\alpha+\mathbf{e}_i-\mathbf{e}_{i+1}} & \text{ if } \alpha_{i+1} > \alpha_{\tau(i)+1}, \\
      x^{d-\delta_{\tau(i),0}} b^\vee_{\alpha-\mathbf{e}_{\tau(i)}+\mathbf{e}_{\tau(i)+1}} & \text{ if } \alpha_{i+1} \leq \alpha_{\tau(i)+1}.
    \end{cases}
  \end{align*}
\end{itemize}

\subsection{Type A.1}

We consider the $\imath$quantum group $\U^\imath$ of type A.1.
In this case, $\tau$ is the identity.
We take parameters $\varsigma_i,\kappa_i$ in \eqref{B_i} as $\varsigma_i=q^{-1},\kappa_i=0$ for any $i$. 
\begin{figure}[h]
$$
\vcenter{\xymatrix@R=2ex{
  && \circ \ar@{-}[dll]_<{0} \ar@{-}[drr] \\
\circ \ar@{-}[r]_<{1} & \circ \ar@{-}[r]_<{2} & \cdots \ar@{-}[r]_>{n-2} & \circ \ar@{-}[r]_>{n-1} & \circ
}}
$$
\caption{Satake diagram of type A.1.}
\end{figure}
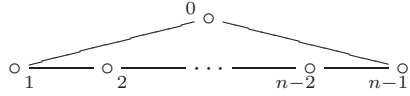
    The $\imath$crystal graph of $\B$ is given as follows:
    $$
    \xymatrix{
      b_1 \ar@{<->}[r]_1 & b_2 \ar@{<->}[r]_2 & \cdots \ar@{<->}[r]_{n-2} & b_{n-1} \ar@{<->}[r]_{n-1} & b_n \ar@{<->}@/_20pt/[llll]_0
    }
    $$
Here $b_i=v_i+q^{-1}\mathcal{L}$.
This type is twisted \eqref{twisted}, and the quantum $K$-matrix for $\W$ is given by
\[
K_\W(x)=\sum_jE_{n+1-j,j},
\]
which is a $\U^\imath$-linear map from $\W(x)$ to $\W^\vee(x^{-1})$.

Let $s \geq 1$, and consider KR crystals $\B_s(x) = \{ x^db_\alpha \}$ and $\B_s^\vee(x^{-1}) = \{ x^db^\vee_\alpha \}$.
We shall define a map
\begin{equation} \label{K for A1}
  \K: \B_s(x) \rightarrow \B_s^\vee(x^{-1}),\quad x^db_\alpha\mapsto x^{d+I(\alpha)}b^\vee_{\alpha'},
\end{equation}
where
\begin{align*}
  \begin{split}
    &I(\alpha) = 2\left\lfloor \frac{\alpha_1}{2} \right\rfloor = \alpha_1 - \theta(\alpha_1), \\
    &\alpha' = (\alpha'_1,\ldots,\alpha'_n),\quad\alpha'_i = \alpha_{i+1} + \theta(\alpha_i) - \theta(\alpha_{i+1}).
  \end{split}
\end{align*}

Below, we shall show that the map $\K$ is the combinatorial $K$-matrix.
First of all, let us observe that $\K$ is a bijection.
Actually, the assignment
$$
x^d b^\vee_\alpha \mapsto x^{d+J(\alpha)} b_{\alpha''},
$$
where
\begin{align*}
  \begin{split}
    &J(\alpha) = -\alpha_n + \theta(\alpha_{n}), \\
    &\alpha''_i = \alpha_{i-1} + \theta(\alpha_{i}) - \theta(\alpha_{i-1}),
  \end{split}
\end{align*}
gives rise to the inverse map.

Next, let us fix a normalized $K$-matrix as follows.
It is easy to see that
$$
\{ \alpha \mid \beta_1(b_\alpha) = s \} = \{ \mathbf{e}_1 + (s-1)\mathbf{e}_2,\ s\mathbf{e}_2 \},
$$
and
$$
\tilde{B}_1b_{\mathbf{e}_1 + (s-1)\mathbf{e}_2} = b_{s\mathbf{e}_2},\ \tilde{B}_1b_{s\mathbf{e}_2} = b_{\mathbf{e}_1 + (s-1)\mathbf{e}_2}.
$$
Then, by \cite[Section 4.1]{Wa2}, there exists a unique $u \in \mathcal{L}_s$ such that
$$
u + q^{-1}\mathcal{L}_s = b_{\mathbf{e}_1 + (s-1)\mathbf{e}_2} + b_{s\mathbf{e}_2}
$$
and that
$$
\{ w \in \W_s \mid B_1w = [s]_1w \} = \mathbb{C}(q)u.
$$
Similarly, there exists a unique $u^\vee \in \mathcal{L}_s^\vee$ such that
$$
u^\vee + q^{-1}\mathcal{L}_s^\vee = b^\vee_{(s-1)\mathbf{e}_1 + \mathbf{e}_2} + b^\vee_{s\mathbf{e}_1},
$$
and that
$$
\{ w \in \W^\vee_s \mid B_1w = [s]_1w \} = \mathbb{C}(q)u^\vee.
$$
The above observation enables us to normalize the quantum $K$-matrix $K_{\W_s}(x)$ as
$$
K_{\W_s}(x)(u) = u^\vee.
$$

\begin{lemma}\label{lem: connectedness of KR crystal AI}
The $\imath$crystal $\B_s(x)$ is connected.
Consequently, we have $K_{\W_s}(x)(\mathcal{L}_s(x)) \subset \mathcal{L}_s^\vee(x^{-1})$.
\end{lemma}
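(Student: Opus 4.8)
The plan is to isolate the connectedness of $\B_s(x)$ as the combinatorial core and then read off the lattice inclusion from it. For the inclusion, observe first that $K:=K_{\W_s}(x)$ is $\U^\imath$-linear, being a composite of the $\U^\imath$-linear $K_\W$ with $\U$-linear $R$-matrices; since here $I_\bullet=\emptyset$ and $\tau=\mathrm{id}$, the generators $K_iK_{\tau(i)}^{-1}$ act by $1$, so $\U^\imath$ is generated by the $B_i$ and $\U^\imath$-linearity becomes $KB_i=B_iK$ for all $i\in I$. By the $\imath$crystal basis theory of \cite{Wa2} there are Kashiwara-type operators on $\W_s$ and $\W_s^\vee$ preserving $\mathcal{L}_s$, resp.\ $\mathcal{L}_s^\vee$, and inducing the combinatorial $\tilde{B}_i$, and the relation $KB_i=B_iK$ forces $K$ to commute with them. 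Granting connectedness, the $\tilde{B}_i$-orbit of the distinguished vector $u$ generates $\mathcal{L}_s(x)$ over $\mathbf{A}$; since $K(u)=u^\vee\in\mathcal{L}_s^\vee(x^{-1})$, applying the $\tilde{B}_i$ and using this intertwining gives $K(\mathcal{L}_s(x))\subseteq\mathcal{L}_s^\vee(x^{-1})$. (Equivalently, one may combine connectedness with the preceding uniqueness lemma: a lattice-preserving normalization differs from $K$ by a scalar, which $K(u)=u^\vee$ pins down to a power of $x$.)

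For the connectedness itself, recall that with the present parameters ($a_{i,\tau(i)}=2$, $s_i=0$) the operator $\tilde{B}_i$ coincides with $\tE_i$ when $\alpha_i$ is even and with $\tF_i$ when $\alpha_i$ is odd; thus it transports a single token between the cyclically adjacent nodes $i$ and $i+1$, the direction being dictated by the parity of $\alpha_i$, and it inherits from $\tE_i,\tF_i$ the feature that the affine degree $d$ is changed only for $i=0$. I would argue in two steps. First, projecting away $d$, I show the finite graph on $\{b_\alpha\mid\sum_i\alpha_i=s\}$ with all edges $\tilde{B}_i$ ($i\in I$) is connected, by gathering, for each $\alpha$, all tokens at node $1$, i.e.\ reaching $b_{s\mathbf{e}_1}$: tokens are pushed leftward one at a time, and whenever the parity at the receiving node forbids the move I first correct it with a neighbouring $\tilde{B}_j$ before proceeding. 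Second, I show the grading by $d$ does not disconnect: it suffices to exhibit one closed loop in the projected graph whose net number of $\tilde{B}_0$-crossings is $\pm1$, for then that loop and its inverse join $x^db_\alpha$ to $x^{d\pm1}b_\alpha$ and hence realize every $d\in\Z$; such a loop is produced by carrying a single token once around the cycle back to its starting node.

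The main obstacle is precisely the parity-dependent direction of $\tilde{B}_i$: on a single $i$-string the operator merely pairs consecutive elements $b_0\leftrightarrow b_1$, $b_2\leftrightarrow b_3,\dots$, so no node alone connects a string and the routing in both steps must thread these two-element pieces together through the remaining nodes. Thus the parity-bookkeeping in the leftward-gathering step, and the check that the token-transport loop meets node $0$ with net multiplicity exactly $\pm1$, are the delicate points; the affine node is indispensable here, since the non-affine $\tilde{B}_1,\dots,\tilde{B}_{n-1}$ alone already leave several components (for instance $b_{s\mathbf{e}_1}$ is isolated under them when $s$ is even).
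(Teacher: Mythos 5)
Your route to the ``Consequently'' clause --- $\U^\imath$-linearity of $K_{\W_s}(x)$, commutation with the module-level operators that induce the $\tilde{B}_i$, and propagation from $K_{\W_s}(x)(u)=u^\vee$ along a connected $\imath$crystal --- is essentially the mechanism the paper relies on (implicitly), and is fine modulo the same appeal to \cite{Wa2} for the facts that the $\imath$-Kashiwara operators preserve the crystal lattice and commute with morphisms of $\U^\imath$-modules. One small wrinkle: $u+q^{-1}\mathcal{L}_s$ is the two-term sum $b_{\eb_1+(s-1)\eb_2}+b_{s\eb_2}$, not a single crystal element, so to launch the propagation you need something like the paper's observation that $\tilde{B}_0^2u\equiv b_{\eb_1+(s-1)\eb_2}$ because $\tilde{B}_0b_{s\eb_2}=0$.

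For the connectedness itself your route genuinely differs from the paper's, which splits the problem into (a) connectivity within classical pieces, quoted from \cite[Theorems 3.3.6 and 4.3.1]{Wa1}, and (b) a two-line $\tilde{B}_1\tilde{B}_0$ computation gluing those pieces together. Your covering-space treatment of the grading by $d$ (a closed loop of net weight $\pm1$ in the projected graph) is a correct and in fact more careful handling of a point the paper passes over silently; note, though, that for the lattice inclusion only connectedness of the projected crystal is needed, since $\mathcal{L}_s(x)=\mathcal{L}_s\otimes_{\C}\C(x)$. The genuine gap is that the two steps you yourself flag as delicate --- that the parity at the receiving node can always be ``corrected with a neighbouring $\tilde{B}_j$'' without undoing earlier progress, and that the once-around-the-cycle loop is actually realizable when every intermediate move carries its own parity precondition --- are precisely where the content of the quoted \cite{Wa1} theorems lives, and they are asserted rather than proved. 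For instance, carrying a single token from node $1$ backwards around the cycle starting from $b_{\eb_1+(s-1)\eb_2}$ proceeds freely through nodes $n,n-1,\dots,3$ but the final step $\tE_2$ requires $\alpha_2=s-1$ to be even, so the naive transport already stalls for $s$ even and a nontrivial detour must be designed and verified. As written, the combinatorial core of the lemma is therefore a plan rather than a proof; either carry out the parity bookkeeping in full (an induction on the number of tokens not yet at node $1$, with an explicit correction subroutine), or fall back on the paper's citation of \cite{Wa1} for the classical connectivity and reduce your argument to the affine gluing and the loop-weight computation.
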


\begin{proof}
  By \cite[Theorems 3.3.6 and 4.3.1]{Wa1}, each $b_\alpha$ is connected to $b_{2\lfloor \frac{\alpha_1}{2} \rfloor \mathbf{e}_1 + (s- 2\lfloor \frac{\alpha_1}{2} \rfloor)\mathbf{e}_2}$.
  Also, when $\alpha_1 > 2$, we have
  $$
  \tilde{B}_1 \tilde{B}_0 b_{2\left\lfloor \frac{\alpha_1}{2} \right\rfloor \mathbf{e}_1 + (s- 2\left\lfloor \frac{\alpha_1}{2} \right\rfloor)\mathbf{e}_2} = b_{(2\left\lfloor \frac{\alpha_1}{2} \right\rfloor-2)\mathbf{e}_1 + (s- 2\left\lfloor \frac{\alpha_1}{2} \right\rfloor + 1)\mathbf{e}_2 + \mathbf{e}_n}.
  $$
  Since the right-hand side is connected to $b_{(2\left\lfloor \frac{\alpha_1}{2} \right\rfloor-2)\mathbf{e}_1 + (s- 2\left\lfloor \frac{\alpha_1}{2} \right\rfloor + 2)\mathbf{e}_2}$, so is $\alpha$.
  Therefore, $\alpha$ is connected to $b_{s\mathbf{e}_2}$.
  This proves the assertion.
\end{proof}

By Lemma \ref{lem: connectedness of KR crystal AI}, the quantum $K$-matrix $K_{\W_s}(x)$ induces a $\mathbb{C}$-linear map
$$
\bar{K}_{\W_s}(x): \mathcal{L}_s(x)/q^{-1}\mathcal{L}_s(x) \rightarrow \mathcal{L}^\vee_s(x^{-1})/q^{-1}\mathcal{L}_s^\vee(x^{-1}).
$$
As before, we set $\K_{\B_s}(x) := \bar{K}_{\W_s}(x)|_{\B_s(x)}$.

\begin{lemma}\label{lem: K(alpha) AI}
Let $\alpha = \mathbf{e}_1 + (s-1)\mathbf{e}_2$. Then, we have
$$
\K_{\B_s}(x)(b_\alpha) = \K(b_\alpha)
$$
\end{lemma}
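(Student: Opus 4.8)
The plan is to combine the normalization of $K_{\W_s}(x)$ with the fact that its reduction respects the $\imath$crystal structure, and then to separate the two possible images by their $\beta$-data. Write $\alpha=\mathbf{e}_1+(s-1)\mathbf{e}_2$. First I would record the ``seed'' identity coming from the normalization: since $K_{\W_s}(x)(u)=u^\vee$ with $u\equiv b_\alpha+b_{s\mathbf{e}_2}$ and $u^\vee\equiv b^\vee_{(s-1)\mathbf{e}_1+\mathbf{e}_2}+b^\vee_{s\mathbf{e}_1}\pmod{q^{-1}}$, and since by Lemma \ref{lem: connectedness of KR crystal AI} the map $K_{\W_s}(x)$ preserves the crystal lattice, reduction modulo $q^{-1}$ yields
\[
\K_{\B_s}(x)(b_\alpha)+\K_{\B_s}(x)(b_{s\mathbf{e}_2})=b^\vee_{(s-1)\mathbf{e}_1+\mathbf{e}_2}+b^\vee_{s\mathbf{e}_1}.
\]
Here $\{b_\alpha,b_{s\mathbf{e}_2}\}$ are exactly the vectors with $\beta_1=s$ (the maximal value), and likewise $\{b^\vee_{(s-1)\mathbf{e}_1+\mathbf{e}_2},b^\vee_{s\mathbf{e}_1}\}$ are exactly the dual vectors with $\beta_1=s$, so the problem is reduced to deciding a pairing between these two two-element sets.

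Next I would argue that each of $\K_{\B_s}(x)(b_\alpha)$ and $\K_{\B_s}(x)(b_{s\mathbf{e}_2})$ is a single crystal basis element rather than a genuine linear combination. This follows from the fact that $K_{\W_s}(x)$ is $\U^\imath$-linear and preserves the crystal lattices, so --- by the $\imath$crystal analogue of the statement that a lattice-preserving $\U^\imath$-linear map reduces to a morphism of $\imath$crystals (the mechanism behind Proposition \ref{prop: comb K is icry isom}) --- its reduction carries each basis vector into $\B_s^\vee(x^{-1})\sqcup\{0\}$. Since the right-hand side of the seed identity is a sum of two distinct basis vectors each with coefficient $1$, neither image can vanish, and hence $\{\K_{\B_s}(x)(b_\alpha),\K_{\B_s}(x)(b_{s\mathbf{e}_2})\}=\{b^\vee_{(s-1)\mathbf{e}_1+\mathbf{e}_2},b^\vee_{s\mathbf{e}_1}\}$ as sets.

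It then remains to fix the correct pairing, and here I would use that $\K_{\B_s}(x)$, being a morphism of $\imath$crystals, preserves every $\beta_i$. A direct evaluation of the $\imath$crystal formulas gives, in the components $(\beta_2,\beta_0)$, the values $(\theta(s-1),1)$ for $b_\alpha$ and $(\theta(s),0)$ for $b_{s\mathbf{e}_2}$, while on the dual side one gets $(1,\theta(s-1))$ for $b^\vee_{(s-1)\mathbf{e}_1+\mathbf{e}_2}$ and $(0,\theta(s))$ for $b^\vee_{s\mathbf{e}_1}$. Splitting into the cases $s$ even and $s$ odd shows that the two candidate images have distinct $\beta$-profiles and that exactly one matches $b_\alpha$: namely $b^\vee_{(s-1)\mathbf{e}_1+\mathbf{e}_2}$ for $s$ even and $b^\vee_{s\mathbf{e}_1}$ for $s$ odd. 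This is precisely $\K(b_\alpha)$ as computed from \eqref{K for A1}, which proves the claim.

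The step I expect to be the main obstacle is the middle one: justifying rigorously that the reduction sends $b_\alpha$ to a single basis vector (and preserves each $\beta_i$) without already invoking global bijectivity of $\K_{\B_s}(x)$, since a priori the reduction could smear $b_\alpha$ over several $b^\vee_\gamma$. If the clean $\imath$crystal-morphism argument is not available at this stage of the development, the fallback is a direct leading-order computation: expand $v_\alpha$ as $v_1\otimes v_2^{\otimes(s-1)}$ plus $q^{-1}$-corrections, push it through the fusion formula \eqref{rec for K} for $K_{\W_s}(x)$ (or induct on $s$ with base case $s=1$, where $K_{\W_1}=K_\W$ is explicit), and read off the $q\to\infty$ leading term; the $\beta$-profile comparison above then serves as the consistency check identifying the limit.
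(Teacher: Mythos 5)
Your first step (reducing the normalization $K_{\W_s}(x)(u)=u^\vee$ modulo $q^{-1}$ to get the two-term ``seed'' identity) and your final $\beta$-profile computations are correct and consistent with the formula \eqref{K for A1}. The gap is exactly where you suspect it: the claim that $\K_{\B_s}(x)(b_\alpha)$ and $\K_{\B_s}(x)(b_{s\mathbf{e}_2})$ are single elements of $\B_s^\vee(x^{-1})$ is not available at this point. Proposition \ref{prop: comb K is icry isom} presupposes that the combinatorial $K$-matrix \emph{exists}, i.e.\ that $\K_{\B_s}(x)$ restricts to a bijection $\B_s(x)\to\B_s^\vee(x^{-1})$ --- which is precisely what this lemma is a step toward proving. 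What $\U^\imath$-linearity plus lattice preservation actually gives you, without circularity, is only that the induced map $\bar K_{\W_s}(x)$ on $\mathcal{L}_s(x)/q^{-1}\mathcal{L}_s(x)$ commutes with the operators $\tilde{B}_i$; it does not by itself prevent the image of a single basis vector from being smeared over several $b^\vee_\gamma$ (nor does it let you speak of ``preserving $\beta_i$'' of an image that is not yet known to be a basis vector). Your fallback (a direct leading-order computation through the fusion formula) would close the gap but is not carried out and would be considerably more laborious.

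The missing idea, which is how the paper argues, is to exploit the weaker commutation fact directly: apply $\tilde{B}_0^2$ to both sides of the seed identity. On the source side, $\tilde{B}_0 b_{s\mathbf{e}_2}=0$ while $\tilde{B}_0^2 b_{\mathbf{e}_1+(s-1)\mathbf{e}_2}=b_{\mathbf{e}_1+(s-1)\mathbf{e}_2}$, so $\tilde{B}_0^2(u+q^{-1}\mathcal{L}_s)=b_\alpha$; on the target side one computes explicitly that $\tilde{B}_0^2\bigl(b^\vee_{(s-1)\mathbf{e}_1+\mathbf{e}_2}+b^\vee_{s\mathbf{e}_1}\bigr)$ equals $b^\vee_{s\mathbf{e}_1}$ for $s$ odd and $b^\vee_{(s-1)\mathbf{e}_1+\mathbf{e}_2}$ for $s$ even, which is $\K(b_\alpha)$. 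Since $\K_{\B_s}(x)\circ\tilde{B}_0^2=\tilde{B}_0^2\circ\K_{\B_s}(x)$ needs only the linear commutation on the quotient lattice, this isolates $b_\alpha$ and its image in one stroke and makes your steps (b) and (c) unnecessary. I recommend replacing the ``single basis vector'' and $\beta$-matching arguments with this.
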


\begin{proof}
  Since
  \[
  \tilde{B}_0^2 b_{\mathbf{e}_1 + (s-1)\mathbf{e}_2} = b_{\mathbf{e}_1 + (s-1)\mathbf{e}_2},\ \tilde{B}_0b_{s\mathbf{e}_2} = 0,
  \]
  we see that
  \[
  \tilde{B}_0^2 u + q^{-1}\mathcal{L}_s = b_{\mathbf{e}_1 + (s-1)\mathbf{e}_2} = b_\alpha.
  \]
  Similarly, we have
  \begin{align*}
    \begin{split}
      \tilde{B}_0^2 u^\vee + q^{-1}\mathcal{L}_s^\vee &= \begin{cases}
        b^\vee_{s\mathbf{e}_1} & \text{ if $s$ is odd}, \\
        b^\vee_{(s-1)\mathbf{e}_1 + \mathbf{e}_2} & \text{ if $s$ is even}.
      \end{cases} \\
      &= \K(b_\alpha)
    \end{split}
  \end{align*}
  Since we normalized the quantum $K$-matrix as $K_{\W_s}(x)(u) = u^\vee$, we obtain
  \begin{align*}
    \begin{split}
      \K_{\B_s}(x)(b_\alpha) &= \K_{\B_s}(x)(\tilde{B}_0^2(u + q^{-1}\mathcal{L}_s(x))) \\
      &= \tilde{B}_0^2(u^\vee + q^{-1}\mathcal{L}_s^\vee(x^{-1})) \\
      &= \K(b_\alpha),
    \end{split}
  \end{align*}
  as desired.
  This proves the assertion.
\end{proof}

\begin{lemma}\label{lem: theta(alpha'_i) = theta(alpha_i)}
  Let $x^d b_\alpha \in \B_s(x)$, and write $\K(x^d b_\alpha) = x^{d'} b^\vee_{\alpha'}$.
  Then, for each $i \in I$, we have
  \[
    \theta(\alpha'_{i}) = \theta(\alpha_{i}).
  \]
\end{lemma}

\begin{proof}
  Noting that $n-\theta(n)$ is even for all $n \in \mathbb{Z}$, we see that the parity of $\alpha'_i = \alpha_{i+1} + \theta(\alpha_{i}) - \theta(\alpha_{i+1})$ coincides with that of $\theta(\alpha_{i})$.
  Hence, the assertion follows.
\end{proof}

\begin{proposition}
The map $\K$ given in \eqref{K for A1} is the combinatorial $K$-matrix.
\end{proposition}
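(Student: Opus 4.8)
The plan is to prove the equality $\K = \K_{\B_s}(x)$, where $\K_{\B_s}(x) = \bar{K}_{\W_s}(x)|_{\B_s(x)}$ is the map induced by the quantum $K$-matrix. Since $\K$ is already known to be a bijection $\B_s(x) \to \B_s^\vee(x^{-1})$ (via the explicit inverse recorded just after \eqref{K for A1}), this single equality shows simultaneously that the combinatorial $K$-matrix exists and that it coincides with $\K$. Both $\K$ and $\K_{\B_s}(x)$ will be exhibited as morphisms of $\imath$crystals commuting with the operators $\tilde{B}_i$ ($i \in I$); they agree at the distinguished element $b_{\mathbf{e}_1+(s-1)\mathbf{e}_2}$ by Lemma \ref{lem: K(alpha) AI}; and $\B_s(x)$ is connected as an $\imath$crystal by Lemma \ref{lem: connectedness of KR crystal AI}. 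Propagating the agreement from this anchor along the $\tilde{B}_i$-graph then forces $\K = \K_{\B_s}(x)$ everywhere.

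First I would verify that $\K$ intertwines the $\imath$crystal structure, i.e.\ $\K \circ \tilde{B}_i = \tilde{B}_i \circ \K$ for every $i \in I$ (the compatibility with $\mathrm{wt}^\imath$ and $\beta_i$ being routine). In type A.1 we have $a_{i,\tau(i)} = 2$ with $s_i = 0$ at every node, so on both $\B_s(x)$ and $\B_s^\vee(x^{-1})$ the operator $\tilde{B}_i$ equals $\tilde{E}_i$ or $\tilde{F}_i$ according to the parity $\theta(\alpha_i)$ (resp.\ $\theta(\alpha_{i+1})$ on the dual side), as listed in Section \ref{sec:type A}. Writing $\K(x^db_\alpha) = x^{d+I(\alpha)}b^\vee_{\alpha'}$ and setting $\gamma = \alpha \pm(\mathbf{e}_i - \mathbf{e}_{i+1})$ for the effect of $\tilde{B}_i$ on the source, the verification reduces to two checks: that the index vector transforms as $\gamma' = \alpha' \mp(\mathbf{e}_i - \mathbf{e}_{i+1})$, and that the spectral exponent $I$ changes consistently with the shift produced by $\tilde{B}_i$ on the target. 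The essential tool is Lemma \ref{lem: theta(alpha'_i) = theta(alpha_i)}: it guarantees $\theta(\alpha'_{i+1}) = \theta(\alpha_{i+1})$, so that the branch of $\tilde{B}_i$ selected on $\B_s^\vee(x^{-1})$ matches the one selected on $\B_s(x)$, and the elementary parity identity $\theta(m\pm1) = 1-\theta(m)$ then makes all the $\theta$-corrections cancel. The computation naturally splits into the cases $i \in \{2,\dots,n-1\}$ (where component $1$, hence $I(\alpha)$, is untouched), $i = 1$ (where $\alpha_1$ shifts but $I$ is unchanged after cancellation), and the affine node $i = 0$ (where both $\alpha_1$ and the exponent move and the contributions must conspire).

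Next I would record that $\K_{\B_s}(x)$ is likewise an $\imath$crystal morphism: the quantum $K$-matrix $K_{\W_s}(x)$ is $\U^\imath$-linear, so the induced map $\bar{K}_{\W_s}(x)$ commutes with the operators $\tilde{B}_i$, which is exactly the mechanism behind Proposition \ref{prop: comb K is icry isom}. By Lemma \ref{lem: K(alpha) AI} we have $\K_{\B_s}(x)(b_{\mathbf{e}_1+(s-1)\mathbf{e}_2}) = \K(b_{\mathbf{e}_1+(s-1)\mathbf{e}_2})$. Because $a_{i,\tau(i)}=2$ everywhere in type A.1, each $\tilde{B}_i$ sends a single basis vector to a single basis vector (the $\frac{1}{\sqrt2}$-superpositions of the $a_{i,\tau(i)}=-1$ case do not arise), so repeatedly applying $\tilde{B}_i$ and their inverses keeps $\K_{\B_s}(x)$ basis-to-basis and forces it to agree with $\K$ at every element reachable from the anchor. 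Lemma \ref{lem: connectedness of KR crystal AI} ensures that every element is so reachable, whence $\K_{\B_s}(x) = \K$ on all of $\B_s(x)$; since $\K$ is a bijection onto $\B_s^\vee(x^{-1})$, this establishes both the existence of the combinatorial $K$-matrix and its identification with $\K$.

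The main obstacle is the case analysis of the first step, and within it the affine node $i=0$. There the change in the leading component $\alpha_1$ alters $I(\alpha)=\alpha_1-\theta(\alpha_1)$, and one must check that this alteration exactly absorbs the spectral-parameter shift produced by $\tilde{B}_0$ acting on $\B_s^\vee(x^{-1})$. Matching these exponent shifts, rather than the purely combinatorial matching of the index vectors $\alpha'$, is the delicate point, and it is precisely where Lemma \ref{lem: theta(alpha'_i) = theta(alpha_i)} together with the parity identities is indispensable.
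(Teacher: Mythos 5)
Your proposal is correct and follows essentially the same route as the paper: reduce to showing that $\K$ commutes with every $\tilde{B}_i$, then conclude via the anchor value from Lemma \ref{lem: K(alpha) AI} and the connectedness of the $\imath$crystal $\B_s(x)$ from Lemma \ref{lem: connectedness of KR crystal AI}, with Lemma \ref{lem: theta(alpha'_i) = theta(alpha_i)} supplying the parity matching needed in the $\tilde{B}_i$-computation. The only caveat is that the commutation check is here outlined rather than carried out; the paper performs it by explicitly computing both $\tilde{B}_i(x^{d'}b^\vee_{\alpha'})$ and $\K(\tilde{B}_i(x^d b_\alpha))$ componentwise and verifying they coincide.
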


\begin{proof}
  We only need to show that
  \[
  \K(x^d b_\alpha) = \K_{\B_s}(x)(x^d b_\alpha)
  \]
  for all $x^d b_\alpha \in \B_s(x)$.
  To this end, by Lemmas \ref{lem: connectedness of KR crystal AI} and \ref{lem: K(alpha) AI}, it suffices to prove that the map $\K$ commutes with $\tilde{B}_i$ for all $i \in I$.
  Below, we often use Lemma \ref{lem: theta(alpha'_i) = theta(alpha_i)} without stating explicitly at each time.

  Let $i \in I$, $x^d b_\alpha \in \B_s(x)$, and write $\K(x^d b_\alpha) = x^{d'} b^\vee_{\alpha'}$.
  We see that
  \begin{align}
    \tilde{B}_i(x^{d'} b^\vee_{\alpha'}) = x^{d'-(-1)^{\theta(\alpha_{i+1})}\delta_{i,0}} b^\vee_{\alpha' -(-1)^{\theta(\alpha_{i+1})}(\mathbf{e}_i-\mathbf{e}_{i+1})}.
  \end{align}
  Setting $\alpha'' := \alpha' -(-1)^{\theta(\alpha_{i+1})}(\mathbf{e}_i-\mathbf{e}_{i+1})$, we have for each $j \in I$, 
  \begin{align*}
    \begin{split}
      \alpha''_j &= \begin{cases}
        \alpha'_i - (-1)^{\theta(\alpha_{i+1})} & \text{ if } j = i, \\
        \alpha'_{i+1} + (-1)^{\theta(\alpha_{i+1})} & \text{ if } j = i+1, \\
        \alpha'_j & \text{ otherwise},
      \end{cases} \\
      &= \begin{cases}
        \alpha_{i+1} + \theta(\alpha_{i}) - \theta(\alpha_{i+1}) - (-1)^{\theta(\alpha_{i+1})} & \text{ if } j = i, \\
        \alpha_{i+2} + \theta(\alpha_{i+1}) - \theta(\alpha_{i+2}) + (-1)^{\theta(\alpha_{i+1})} & \text{ if } j = i+1, \\
        \alpha_{j+1} + \theta(\alpha_{j}) - \theta(\alpha_{j+1}) & \text{ otherwise},
      \end{cases} \\
      &= \begin{cases}
        \alpha_{i+1} + \theta(\alpha_{i}) + \theta(\alpha_{i+1}) -1 & \text{ if } j = i, \\
        \alpha_{i+2} - \theta(\alpha_{i+1}) - \theta(\alpha_{i+2}) + 1 & \text{ if } j = i+1, \\
        \alpha_{j+1} + \theta(\alpha_{j}) - \theta(\alpha_{j+1}) & \text{ otherwise}.
      \end{cases}
    \end{split}
  \end{align*}
  On the other hand, we have
  \begin{align}
    \tilde{B}_i(x^d b_\alpha) = x^{d+(-1)^{\theta(\alpha_{i})}\delta_{i,0}} b_{\alpha + (-1)^{\theta(\alpha_{i})}(\mathbf{e}_i-\mathbf{e}_{i+1})}.
  \end{align}
  Let us write
  \[
    \K(\tilde{B}_i(x^d b_\alpha)) = x^{d'''} b^\vee_{\alpha'''}.
  \]
  Then, we have
  \[
      d''' = d + (-1)^{\theta(\alpha_{i})}\delta_{i,0} + I(\alpha + (-1)^{\theta(\alpha_{i})}(\mathbf{e}_i-\mathbf{e}_{i+1})),
  \]
  and for each $j \in I$,
  \begin{align*}
    \begin{split}
      \alpha'''_j = \begin{cases}
        \alpha_i + (-1)^{\theta(\alpha_{i})} + \theta(\alpha_{i-1}) - \theta(\alpha_{i}+(-1)^{\theta(\alpha_{i})}) & \text{ if } j = i-1, \\
        \alpha_{i+1} - (-1)^{\theta(\alpha_{i})} + \theta(\alpha_{i}+(-1)^{\theta(\alpha_{i})}) - \theta(\alpha_{i+1}-(-1)^{\theta(\alpha_{i})}) & \text{ if } j = i, \\
        \alpha_{i+2} + \theta(\alpha_{i+1}-(-1)^{\theta(\alpha_{i})}) - \theta(\alpha_{i+2}) & \text{ if } j = i+1, \\
        \alpha_{j+1} + \theta(\alpha_{j}) - \theta(\alpha_{j+1}) & \text{ otherwise}.
      \end{cases}
    \end{split}
  \end{align*}
  Now, it is an easy exercise to verify that $d''' = d'-(-1)^{\theta(\alpha_{i+1})}\delta_{i,0}$ and that $\alpha'''_j = \alpha''_j$ for all $j \in I$.
  Thus, we see that $\K$ commutes with $\tilde{B}_i$, and hence, the proof completes.
\end{proof}
\begin{example} \label{ex:reflectionA1}
Set $n=3$. Below is the graphical presentation when we apply the both hand sides of the combinatorial reflection equation 
\eqref{comb RE} on $x^0b_{(1,2,2)}\ot y^0b_{(2,1,0)}\in\B_5(x)\ot\B_3(y)$.
\[
\begin{picture}(300,185)(-32,5)
\put(0,20){
\put(0,-8){\line(0,1){170}}
\put(0,15){\put(0,30){\line(2,1){70}}\put(0,30){\vector(2,-1){70}}}
\put(0,0){\put(0,90){\line(2,3){35}}\put(0,90){\vector(2,-3){62}}}
\put(35,149){\tiny{$x^0(122)$}}
\put(-3,56){\tiny{$y^2(102)$}}
\put(74,80){\tiny{$y^0(210)$}}
\put(14,76){\tiny{$x^0(320)^\vee$}}
\put(36,43){\tiny{$x^2(212)^\vee$}}
\put(18,19){\tiny{$y^2(120)^\vee$}}
\put(75,7){\tiny{$y^5(012)^\vee$}}
\put(65,-11){\tiny{$x^{-1}(320)^\vee$}}
}
\put(115,85){$=$}
\put(160,20){
\put(0,-8){\line(0,1){170}}
\put(0,15){\put(0,30){\line(2,3){65}}\put(0,30){\vector(2,-3){30}}}
\put(0,5){\put(0,90){\line(2,1){70}}\put(0,90){\vector(2,-1){70}}}
\put(63,149){\tiny{$x^0(122)$}}
\put(74,129){\tiny{$y^0(210)$}}
\put(15,115){\tiny{$y^3(021)$}}
\put(41,92){\tiny{$x^{-3}(311)$}}
\put(-3,82){\tiny{$y^3(201)^\vee$}}
\put(15,53){\tiny{$x^{-1}(122)$}}
\put(74,55){\tiny{$y^5(012)^\vee$}}\put(32,-9){\tiny{$x^{-1}(320)^\vee$}}
}
\end{picture}
\]
\end{example}

\subsection{Type A.3}

We consider the $\imath$quantum group $\U^\imath$ of type A.3 in the quasi-split case.
In this case, $\tau$ is given by $\tau(i) = -i \pmod{n}$. We set $n'=\lfloor\frac{n-1}2\rfloor$. 
We take parameters $\varsigma_i,\kappa_i$ in \eqref{B_i} as $\varsigma_0=q^{-1},
\varsigma_{n'+1}=q^{-1}$ if $n$ is even, $\varsigma_{n'+1}=q$ if $n$ is odd, 
$\varsigma_i=1$ otherwise, and $\kappa_i=0$ for any $i$. 
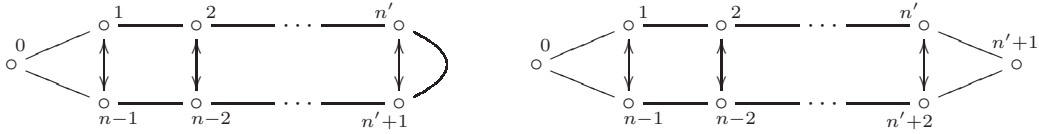
\begin{figure}[h]
$$
\vcenter{\xymatrix@R=1ex{
& \circ \ar@{-}[r]^<{1} \ar@{<->}[dd] & \circ \ar@{-}[r]^<{2} \ar@{<->}[dd] & \cdots \ar@{-}[r]^>{n'} & \circ \ar@{-}@/^18pt/[dd] \ar@{<->}[dd] \\
\circ \ar@{-}[ur]^<{0} \ar@{-}[dr] & \\
& \circ \ar@{-}[r]_<{n-1} & \circ \ar@{-}[r]_<{n-2} & \cdots \ar@{-}[r]_>{n'+1} & \circ \\
}}
\qquad\qquad
\vcenter{\xymatrix@R=1ex{
& \circ \ar@{-}[r]^<{1} \ar@{<->}[dd] & \circ \ar@{-}[r]^<{2} \ar@{<->}[dd] & \cdots \ar@{-}[r]^>{n'} & \circ \ar@{-}[dr]^>{n'+1} \ar@{<->}[dd] \\
\circ \ar@{-}[ur]^<{0} \ar@{-}[dr] &&&&& \circ \\
& \circ \ar@{-}[r]_<{n-1} & \circ \ar@{-}[r]_<{n-2} & \cdots \ar@{-}[r]_>{n'+2} & \circ \ar@{-}[ur] \\
}}
$$
\caption{Satake diagrams of type A.3 when $n$ is odd (left) and even (right).}
\end{figure}
    The $\imath$crystal graph of $\B$ when $n$ is even is given as follows:
    $$
    \xymatrix{
      b_1 \ar[r]^1 \ar@{<->}[d]_0 & b_2 \ar[r]^2 & \cdots \ar[r]^{n'} & b_{n'+1} \ar@{<->}[d]^{n'+1} \\
      b_{n} \ar[r]^1 & b_{n-1} \ar[r]^2 & \cdots \ar[r]^{n'} & b_{n'+2} \\
    }
    $$
Here $b_i=v_i+q^{-1}\mathcal{L}$. This type is untwisted \eqref{untwisted}, and the quantum $K$-matrix for $\W$ is given by
\[
K_\W(x)=\sum_jE_{1-j,j},
\]
which is a $\U^\imath$-linear map from $\W(x)$ to $\W(x^{-1})$.

Note that $\B_s=\{b_\alpha\,|\,\alpha=(\alpha_i)_{1\le i\le n},\alpha_i\ge0,\sum_i\alpha_i=s\}$.
To describe the combinatorial $K$-matrix $\K_{\B_s}(x)$, we need to divide
the set $\{\alpha=(\alpha_i)_{1\le i\le n}\mid b_\alpha\in\B_s\}$ into $n$ subsets as follows.
If $(\alpha_i)$ satisfies 
\begin{align*}
&\alpha_1+\alpha_2+\cd+\alpha_i<\alpha_{n-i}+\cd+\alpha_{n-1},\alpha_2+\cd+\alpha_i<\alpha_{n-i}+\cd+\alpha_{n-2},\cd,\alpha_i<\alpha_{n-i},\\
&\text{and }\alpha_{i+1}\ge \alpha_{n-i-1},\alpha_{i+1}+\alpha_{i+2}\ge \alpha_{n-i-2}+\alpha_{n-i-1},\cd,
\alpha_{i+1}+\cd+\alpha_{n'}\ge \alpha_{n-n'}+\cd+\alpha_{n-i-1},\\
&\text{and }\alpha_1+\cd+\alpha_i+\alpha_{n-i}+\cd+\alpha_n:\text{odd}
\end{align*}
for some $i$ such that $0\le i\le n'$, then say Case $(2i+1)$ holds.
If $(\alpha_i)$ satisfies 
\begin{align*}
&\alpha_1+\alpha_2+\cd+\alpha_i\le \alpha_{n-i}+\cd+\alpha_{n-1},\alpha_2+\cd+\alpha_i\le \alpha_{n-i}+\cd+\alpha_{n-2},\cd,\alpha_i\le \alpha_{n-i},\\
&\text{and }\alpha_{i+1}>\alpha_{n-i-1},\alpha_{i+1}+\alpha_{i+2}>\alpha_{n-i-2}+\alpha_{n-i-1},\cd,
\alpha_{i+1}+\cd+\alpha_{n'}>\alpha_{n-n'}+\cd+\alpha_{n-i-1},\\
&\text{and }\alpha_1+\cd+\alpha_i+\alpha_{n-i}+\cd+\alpha_n:\text{even}
\end{align*}
for some $i$ such that $0\le i\le n'$, then say Case $(2i+2)$ holds. Note that when 
both $n$ and $s$ are odd, then Case ($n+1$) does not occur.
We use this case division only when $s$ is odd.

\begin{remark}
We list the case divisions when $n$ is small and $s$ is odd.

$n=3$
\[
\begin{array}{lll}
\text{Case (1)}\;&\alpha_1\ge \alpha_2,\;&\alpha_3\text{ is odd},\\
\text{Case (2)}\;&\alpha_1>\alpha_2,\;&\alpha_3\text{ is even},\\
\text{Case (3)}\;&\alpha_1<\alpha_2.
\end{array}
\]

$n=4$
\[
\begin{array}{llll}
\text{Case (1)}\;&\alpha_1\ge \alpha_3,&\alpha_4\text{ is odd},\\
\text{Case (2)}\;&\alpha_1>\alpha_3,&\alpha_4\text{ is even},\\
\text{Case (3)}\;&\alpha_1<\alpha_3,&\alpha_1+\alpha_3+\alpha_4\text{ is odd},\\
\text{Case (4)}\;&\alpha_1\le \alpha_3,&\alpha_1+\alpha_3+\alpha_4\text{ is even}.
\end{array}
\]

$n=5$
\[
\begin{array}{lllll}
\text{Case (1)}\;&\alpha_1\ge \alpha_4,&\alpha_1+\alpha_2\ge \alpha_3+\alpha_4,,&\alpha_5\text{ is odd},\\
\text{Case (2)}\;&\alpha_1>\alpha_4,&\alpha_1+\alpha_2>\alpha_3+\alpha_4,&\alpha_5\text{ is even},\\
\text{Case (3)}\;&\alpha_1<\alpha_4,&\alpha_2\ge \alpha_3,&\alpha_1+\alpha_4+\alpha_5\text{ is odd},\\
\text{Case (4)}\;&\alpha_1\le \alpha_4,&\alpha_2>\alpha_3,&\alpha_1+\alpha_4+\alpha_5\text{ is even},\\
\text{Case (5)}\;&\alpha_1+\alpha_2<\alpha_3+\alpha_4,&\alpha_2<\alpha_3.
\end{array}
\]
\end{remark}

\begin{lemma} \label{lem:O1}
Suppose $s$ is odd. Then, for any $\alpha$ such that $b_\alpha\in\B_s$, it belongs to one and only one Case $(i)$ for some $1\le i\le n$. 
\end{lemma}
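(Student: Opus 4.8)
The plan is to reduce the whole case division to elementary statements about the record minima of a single auxiliary sequence, thereby obtaining existence and uniqueness at the same time. For $0\le j\le n'$ set
\[
d_j=(\alpha_1+\cdots+\alpha_j)-(\alpha_{n-j}+\cdots+\alpha_{n-1}),
\]
so that $d_0=0$. The elementary fact that makes everything work is that every block occurring in the inequalities telescopes through the $d_j$: for $1\le k\le i$ and $i+1\le m\le n'$,
\[
(\alpha_k+\cdots+\alpha_i)-(\alpha_{n-i}+\cdots+\alpha_{n-k})=d_i-d_{k-1},
\qquad
(\alpha_{i+1}+\cdots+\alpha_m)-(\alpha_{n-m}+\cdots+\alpha_{n-i-1})=d_m-d_i.
\]
Granting this, the first set of inequalities in Case $(2i+1)$ (resp.\ $(2i+2)$) reads $d_i<d_{k-1}$ (resp.\ $d_i\le d_{k-1}$) for all $1\le k\le i$, that is, $d_i$ is strictly smaller than (resp.\ no larger than) every earlier term; the second set reads $d_i\le d_m$ (resp.\ $d_i<d_m$) for all $i<m\le n'$, that is, $d_i$ is no larger than (resp.\ strictly smaller than) every later term.

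Next I would translate this into a statement about where the minimum $M=\min_{0\le j\le n'}d_j$ is attained. Write $i_{\min}$ and $i_{\max}$ for the smallest and largest index $j$ with $d_j=M$; both exist since the sequence is finite, which will yield existence. A short check shows that the non-parity part of Case $(2i+1)$ holds if and only if $i=i_{\min}$, and that of Case $(2i+2)$ holds if and only if $i=i_{\max}$: the conjunction ``$d_i<$ all earlier terms and $d_i\le$ all later terms'' forces $d_i=M$ and $i$ to be the first index attaining $M$, and symmetrically for $i_{\max}$. Thus, before invoking parity, there are exactly two candidate cases, $(2i_{\min}+1)$ and $(2i_{\max}+2)$, and they share the same index $i$ precisely when the minimum is attained at a unique $j$.

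It then remains to see that the parity condition selects exactly one candidate. The key computation is the congruence, modulo $2$,
\[
\alpha_1+\cdots+\alpha_i+\alpha_{n-i}+\cdots+\alpha_n\equiv d_i+\alpha_n,
\]
which holds because $(\alpha_1+\cdots+\alpha_i)+(\alpha_{n-i}+\cdots+\alpha_{n-1})$ and $(\alpha_1+\cdots+\alpha_i)-(\alpha_{n-i}+\cdots+\alpha_{n-1})$ have the same parity. Since $d_{i_{\min}}=d_{i_{\max}}=M$, the parities attached to the two candidates agree. Hence if $i_{\min}=i_{\max}$ the common value is odd or even and picks out exactly one of $(2i_{\min}+1)$, $(2i_{\min}+2)$; if $i_{\min}<i_{\max}$ the two candidates demand opposite parities (odd for $(2i_{\min}+1)$, even for $(2i_{\max}+2)$), so again exactly one survives. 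This establishes the lemma. As a consistency check, for $n$ odd the candidate $(2n'+2)=(n+1)$ would require $\alpha_1+\cdots+\alpha_{n'}+\alpha_{n'+1}+\cdots+\alpha_n=s$ to be even, which is impossible under the hypothesis that $s$ is odd, matching the stated exclusion of Case $(n+1)$.

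I expect the main obstacle to be bookkeeping rather than conceptual: verifying the telescoping identity and, above all, keeping the strict versus non-strict inequalities aligned with the odd/even labelling, so that the two candidate cases carry exactly opposite parity requirements. Once the translation to record minima of $(d_j)$ is in place the argument is short; the delicate point is the off-by-one indexing at the endpoints $i=0$ and $i=n'$, where one of the two inequality sets is empty and must be read as vacuously true.
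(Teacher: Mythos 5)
Your proof is correct, and it takes a genuinely different route from the paper's. The paper verifies the cases $n=3,4$ directly and then argues by reduction from $n$ to $n-2$, splitting on the sign of $\alpha_1-\alpha_{n-1}$ and regrouping variables (and it only illustrates this reduction for $n=5$, so the published argument is really a sketch). You instead linearize all the inequalities at once through the partial differences $d_j=(\alpha_1+\cdots+\alpha_j)-(\alpha_{n-j}+\cdots+\alpha_{n-1})$: your telescoping identities are correct, and they convert the non-parity part of Case $(2i+1)$ into ``$i$ is the first index attaining $\min_j d_j$'' and that of Case $(2i+2)$ into ``$i$ is the last such index,'' which immediately gives existence of at most two candidates and rules out all other indices. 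The parity computation $\alpha_1+\cdots+\alpha_i+\alpha_{n-i}+\cdots+\alpha_n\equiv d_i+\alpha_n \pmod 2$ is also right, and since $d_{i_{\min}}=d_{i_{\max}}$ the two candidates carry opposite parity requirements, so exactly one survives; your endpoint remarks (vacuous inequality sets at $i=0,n'$, and the exclusion of Case $(n+1)$ for $n,s$ both odd because its parity condition forces $s$ even) all check out. What your approach buys is a complete, uniform-in-$n$, non-inductive proof that also explains structurally why the case division works (first versus last argmin of a lattice path); what the paper's approach buys is brevity, at the cost of leaving the inductive step only sketched. Your version could reasonably replace the paper's proof.
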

\begin{proof}
For $n=3,4$ cases, it is readily checked. 

We show one can reduce the proof of the $n$ case to the $n-2$  case by taking the $n=5$ case for example.
Suppose $\alpha_1>\alpha_4$. Then, under this condition, only Case (1),(2),(5) can occur. Note that in Case (5),
the second inequality automatically satisfied if the first one holds. Thus, the fact that one and only one
case of Case (1),(2),(5) occurs is deduced from the $n=3$ case by considering the case of $(\alpha_1+\alpha_2,\alpha_3+\alpha_4,\alpha_5)$.
Next, suppose $\alpha_1<\alpha_4$. Then, under this condition, only Case (3),(4),(5) can occur. Note that in Case (5),
the first inequality automatically satisfied if the second one holds. Thus, the fact that one and only one
case of Case (3),(4),(5) occurs is deduced again from the $n=3$ case by considering the case of
$(\alpha_2,\alpha_3,\alpha_1+\alpha_4+\alpha_5)$. Finally, suppose $\alpha_1=\alpha_4$. 
Then, under this condition, only Case (1),(4),(5) can occur, and the fact that one and only one
case occurs is deduced again from the $n=3$ case by considering the case of
$(\alpha_2,\alpha_3,\alpha_5)$.
\end{proof}

\begin{proposition}\label{prop: connectedness AIII}
The $\imath$crystal $\B_s$ is connected.
\end{proposition}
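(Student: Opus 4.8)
The plan is to mirror the proof of Lemma~\ref{lem: connectedness of KR crystal AI} from the split case A.1: first contract each $b_\alpha$ to a canonical representative using only the finite-type operators $\{\tilde{B}_i\}_{i \in I_0}$, and then use the affine-node operator $\tilde{B}_0$ to link these representatives to a single distinguished element. Here $I_0 = \{1, \ldots, n-1\}$ now carries the finite-type $\imath$crystal structure attached to $A_{n-1}$ with the nontrivial diagram involution $\tau(i) = n-i$, in contrast to the split involution $\tau = \mathrm{id}$ that governed the A.1 argument.

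The key steps, in order, are as follows. (1) Regarding $\B_s$ as a finite-type $\imath$crystal via $\{\tilde{B}_i\}_{i \in I_0}$, I would apply the finite-type $\imath$crystal theory of \cite{Wa1} (exactly as it was invoked in the A.1 case) to show that every $b_\alpha$ is connected, using only these operators, to a canonical representative $b_{\alpha^\circ}$, and to enumerate the finitely many $\alpha^\circ$ that occur; I expect them to be indexed by a single discrete parameter, just as the A.1 representatives $b_{2\lfloor \alpha_1/2 \rfloor \mathbf{e}_1 + (s - 2\lfloor \alpha_1/2 \rfloor)\mathbf{e}_2}$ were indexed by an even integer. (2) Since $\tilde{B}_0$ has $a_{0,\tau(0)} = a_{00} = 2$ with $s_0 = 0$ and moves a single box between positions $n$ and $1$ with a parity-governed sign, I would combine it with a short word in the $\tilde{B}_i$ to produce an explicit move that decreases this parameter, precisely as $\tilde{B}_1\tilde{B}_0$ did in A.1. (3) Iterating, every $b_\alpha$ is connected to one fixed element, which yields connectedness. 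The case division preceding Lemma~\ref{lem:O1} is expected to enter only through the parity bookkeeping of these moves, not as a separate ingredient.

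The main obstacle lies at the center of the diagram. When $n$ is even, node $n'+1$ has $a_{n'+1,\tau(n'+1)} = 2$, and its twisted behavior must be tracked carefully both in the finite-type contraction of step~(1) and in the linking of step~(2). When $n$ is odd, nodes $n'$ and $n'+1$ satisfy $a_{i,\tau(i)} = -1$, so $\tilde{B}_{n'}$ and $\tilde{B}_{n'+1}$ take the more intricate branched form, at times returning a $\tfrac{1}{\sqrt{2}}$-weighted sum of two basis vectors; here one must check that the resulting edges still connect the relevant components and that none is left isolated. The generic nodes, where $a_{i,\tau(i)} = 0$, contribute only routine box-moving, so essentially all of the real work is a connectivity analysis near these two central nodes, carried out separately for $n$ even and $n$ odd.
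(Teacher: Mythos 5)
Your overall architecture coincides with the paper's: first contract every $b_\alpha$, using only the operators $\tilde{B}_i$ with $i\neq 0$, to a representative supported on positions $1$ and $n$ and indexed by a single parameter, then link those representatives by moves involving $\tilde{B}_0$. You also correctly observe that the case division preceding Lemma \ref{lem:O1} plays no role in connectedness, and you correctly place the $\frac{1}{\sqrt{2}}$-branching at the central nodes for $n$ \emph{odd} (the printed Lemma \ref{lem:O2} says ``$n$ even'' there, but the structure formulas of Section 5 confirm your parity).

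Two points where your plan, as written, does not yet give a proof. First, step (1) cannot be delegated to \cite{Wa1}: that reference treats the split (type AI) finite-type $\imath$crystals, which is why it applies verbatim in the A.1 case, whereas here the finite part carries the nontrivial involution $\tau(i)=n-i$, outside its scope. The paper replaces the citation by an explicit computation (Lemma \ref{lem:O3}): every $b_\alpha$ is generated from $b_{(s-a)\eb_1+a\eb_n}$ with $a=\alpha_n+\gamma_1$, where $\gamma_1$ is given by a concrete max-plus recursion, and the reversibility needed to upgrade ``generated from'' to ``connected to'' is supplied by Lemma \ref{lem:O2}(1). Second, the linking step is not a single $\tilde{B}_1\tilde{B}_0$-type move as in A.1: since $a_{00}=2$ with $s_0=0$, the operator $\tilde{B}_0$ only toggles between $a$ and $a+1$ for $a$ even, so a separate device is needed to cross odd values of $a$. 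For $n>3$ the paper routes through $b_{\eb_1+(s-a-2)\eb_2+\eb_{n-1}+a\eb_n}$ (Lemma \ref{lem:O4}(2)); for $n=3$ one is forced to work with the $\frac{1}{\sqrt{2}}$-weighted linear combinations and to extract the target basis vector by comparing two such combinations with different coefficients (Lemma \ref{lem:O4}(3)). Your outline flags the branching as a concern but does not supply this extraction argument, which is where the real work lies for small $n$.
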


This proposition follows from the next three lemmas.

\begin{lemma} \label{lem:O2}
Suppose $i\ne0$.
\begin{itemize} 
\item[(1)] If $\tilde{B}_ib_\alpha=b_{\alpha'}$, then 
$\tilde{B}_{n-i}b_{\alpha'}=b_\alpha$.
\item[(2)] If $\tilde{B}_{i}b_\alpha=\frac1{\sqrt{2}}(b_{\alpha'}+b_{\alpha''})$, then
$\tilde{B}_{i-1}b_{\alpha'}=\frac1{\sqrt{2}}b_\alpha$ and
$\tilde{B}_{i-1}b_{\alpha''}=\frac1{\sqrt{2}}b_\alpha$.
It happens only when $n$ is even and $i=n'+1$.
\item[(3)] If $\tilde{B}_ib_{\alpha}=\frac1{\sqrt{2}}b_{\alpha'}$, then
$\tilde{B}_{i+1}b_{\alpha'}=\frac1{\sqrt{2}}(b_{\alpha}+b_{\alpha''})$ with some other $\alpha''$.
It happens only when $n$ is even and $i=n'$.
\end{itemize}
\end{lemma}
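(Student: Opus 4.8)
The plan is to verify the three assertions of Lemma~\ref{lem:O2} directly from the explicit $\imath$crystal formulas for the $a_{i,\tau(i)}=0$ and $a_{i,\tau(i)}=-1$ cases recorded in the previous subsection. Since $\tau(i)=n-i$ here, the action of $\tilde{B}_i$ for $i\neq0$ compares $\varphi_i=\alpha_i$ with $\varphi_{\tau(i)}=\alpha_{n-i}$ (adjusted by the parameter $s_i$ when $a_{i,\tau(i)}=-1$), and moves weight either by $-\eb_i+\eb_{i+1}$ (applying $\tilde{F}_i$) or by $\eb_{\tau(i)}-\eb_{\tau(i)+1}=\eb_{n-i}-\eb_{n-i+1}$ (applying $\tilde{E}_{\tau(i)}$). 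The whole lemma is a bookkeeping check that these moves are mutually inverse in the appropriate sense, so I would organize it by the three mutually exclusive regimes that the formulas distinguish.

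First I would treat part (1), which covers all nodes $i\neq0$ at which $\tilde{B}_i$ acts without a $\frac1{\sqrt2}$ factor, i.e. the generic $a_{i,\tau(i)}=0$ case and the ``otherwise'' branches of the $a_{i,\tau(i)}=-1$ case. If $\tilde{B}_ib_\alpha=b_{\alpha'}$ via $\tilde{F}_i$, so $\alpha'=\alpha-\eb_i+\eb_{i+1}$, I would compute $\varphi_{n-i}(b_{\alpha'})$ and $\varphi_i(b_{\alpha'})=\alpha'_{n-i}=\alpha_{n-i}$ and check that $b_{\alpha'}$ lands in the branch of $\tilde{B}_{n-i}$ that applies $\tilde{E}_{\tau(n-i)}=\tilde{E}_i$, returning $\alpha'+\eb_i-\eb_{i+1}=\alpha$; the case $\tilde{B}_ib_\alpha=\tilde{E}_{\tau(i)}b_\alpha$ is handled symmetrically, and one uses $\tau(n-i)=i$ throughout. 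Because the inequalities defining the branches are strict-versus-nonstrict in a complementary way, the key point is simply that the move by one unit flips which side of the comparison $\alpha$ sits on; this is routine once the weight shifts are tracked.

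Next I would handle the degenerate branches, parts (2) and (3), which occur only at the self-paired region of the even-$n$ diagram where $a_{n'+1,\tau(n'+1)}=-1$ effectively forces the $\frac1{\sqrt2}$ normalization. For part (2), starting from the branch $\tilde{B}_ib_\alpha=\frac1{\sqrt2}(\tilde{E}_{\tau(i)}b_\alpha+\tilde{F}_ib_\alpha)$ with $i=n'+1$, I would identify $\alpha'=\alpha+\eb_{\tau(i)}-\eb_{\tau(i)+1}$ and $\alpha''=\alpha-\eb_i+\eb_{i+1}$, then verify that each of $b_{\alpha'},b_{\alpha''}$ falls into the single-$\tilde{E}$ branch of $\tilde{B}_{i-1}=\tilde{B}_{n'}$ carrying the factor $\frac1{\sqrt2}$ and returning $b_\alpha$; the equality $\varphi_{n'}(\tilde{E}_{\tau(n')}b)=\varphi_{n'}(b)$ in that branch is exactly the condition to check against the image weights. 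Part (3) is the reverse reading of the same local picture, confirming that applying $\tilde{B}_{n'}$ with its $\frac1{\sqrt2}\tilde{E}_{\tau(n')}$ branch and then $\tilde{B}_{n'+1}$ reproduces the two-term output of part (2).

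The main obstacle is not conceptual but combinatorial: one must confirm that the side conditions $\varphi_{\tau(i)}(\tilde{F}_ib)=\varphi_{\tau(i)}(b)+1$, $\varphi_i(\tilde{E}_{\tau(i)}b)=\varphi_i(b)$, and $\varphi_i(\tilde{E}_{\tau(i)}b)=\varphi_i(b)-1$ appearing in the $a_{i,\tau(i)}=-1$ formulas match up correctly under the weight shifts, so that the degenerate $\frac1{\sqrt2}$ branches pair precisely and the parameters $s_{n'},s_{n'+1}$ (fixed by the choice $\varsigma_{n'+1}=q^{\mp1}$) enter consistently. I would therefore carry out parts (2) and (3) most carefully, using the explicit $\tilde{B}_{n'}$ and $\tilde{B}_{n'+1}$ formulas and the relation $\varphi_j(b_\alpha)=\alpha_j$, and leave the uniform part (1) to a short case split on which of $\tilde{E}_{\tau(i)},\tilde{F}_i$ realizes $\tilde{B}_i$.
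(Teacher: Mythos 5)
Your plan takes a genuinely different route from the paper, which proves this lemma in one line by citing \cite[Sections 4.2 and 4.3]{Wa2}. You instead verify the three assertions directly from the explicit branch formulas for $\tilde{B}_i$ recorded in Section 5.1, using $\varphi_j(b_\alpha)=\alpha_j$ and the unit weight shifts; this is a legitimate, self-contained finite case check, and the computations do close in every branch. The citation buys brevity; your computation makes visible exactly how the $\frac1{\sqrt2}$ branches of $\tilde{B}_{n'}$ and $\tilde{B}_{n'+1}$ pair up, which is the content actually used later (e.g.\ in Lemmas \ref{lem:O3} and \ref{lem:O4}).

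Three slips to repair when you write it out. (a) You place the $\frac1{\sqrt2}$ phenomena at ``the self-paired region of the even-$n$ diagram where $a_{n'+1,\tau(n'+1)}=-1$.'' This contradicts the paper's own tabulation: $a_{i,\tau(i)}=-1$ occurs only when $n=2n'+1$ is \emph{odd}, with $i=n',n'+1$ \emph{interchanged} (not fixed) by $\tau$; for $n$ even the middle node $n'+1=n/2$ is $\tau$-fixed with $a_{i,\tau(i)}=2$, whose formula never produces a $\frac1{\sqrt2}$. (The word ``even'' in parts (2) and (3) of the lemma appears to be a typo for ``odd''; compare Lemma \ref{lem:O4}(3), which exhibits exactly these branches for $n=3$.) (b) Your part (1) covers only the $a_{i,\tau(i)}=0$ and $-1$ cases and omits $a_{i,\tau(i)}=2$ ($n$ even, $i=n/2=n-i$), where the claim is that $\tilde{B}_{n/2}$ is an involution; this follows from the parity flip of $\alpha_{n/2}$ under $\tilde{B}_{n/2}$ but must be listed. (c) In part (2), with $\alpha_{n'+1}=\alpha_{n'}+1$, the two constituents $b_{\alpha'}=\tilde{E}_{n'}b_\alpha$ and $b_{\alpha''}=\tilde{F}_{n'+1}b_\alpha$ do not both land in ``the single-$\tilde{E}$ branch'' of $\tilde{B}_{n'}$: one satisfies $\alpha'_{n'}=\alpha'_{n'+1}+1$ (the $\frac1{\sqrt2}\tilde{F}_{n'}$ branch) and the other $\alpha''_{n'}=\alpha''_{n'+1}$ (the $\frac1{\sqrt2}\tilde{E}_{n'+1}$ branch); both do return $\frac1{\sqrt2}b_\alpha$, so the conclusion stands, but this bookkeeping is exactly where you must be careful, as you yourself anticipate.
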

\begin{proof}
The assertions follow from \cite[Sections 4.2 and 4.3]{Wa2}.
\end{proof}

\begin{lemma} \label{lem:O3}
Any element $b_\alpha\in\B_s$ is generated by $\tilde{B}_i$ $(i\ne0)$ from $b_{\alpha'}$ where
$\alpha'=(\sum_{j=1}^{n-1}\alpha_j-\gamma_1)\eb_1+(\alpha_n+\gamma_1)\eb_n$.
$\gamma_1$ is defined inductively by
\[
\gamma_{n'}=(\alpha_{n-n'}-\alpha_{n'})_+,\quad
\gamma_j=(\gamma_{j+1}+\alpha_{n-j}-\alpha_j)_+\;\text{for }j=n'-1,n'-2,\ldots,1.
\]
\end{lemma}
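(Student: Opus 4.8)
The plan is to read each operator $\tilde{B}_i$ with $i\ne0$ as a ``folded'' crystal operator. By the explicit formulas above it moves one unit of mass along the edge $(i,i+1)$ or along its mirror edge $(\tau(i),\tau(i)+1)=(n-i,n-i+1)$, the choice being dictated by the comparison of $\alpha_i$ with $\alpha_{\tau(i)}$, with the special $a_{i,\tau(i)}=2$ and $a_{i,\tau(i)}=-1$ rules governing the centre. Since $i\ne0$, the edge $(n,1)$ is never used, so all mass stays on the open path $1-2-\cd-n$; the natural normal forms are the configurations supported on the two ends $\{1,n\}$, and the content of the lemma is to determine which such configuration a given $b_\alpha$ reaches and to exhibit a connecting path. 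By Lemma~\ref{lem:O2} every move is reversible, so being ``generated from $b_{\alpha'}$'' coincides with being connected to $b_{\alpha'}$ in the $\imath$crystal graph; it therefore suffices to produce one path from $b_\alpha$ to $b_{\alpha'}$, and I am free to follow any summand when an operator outputs a $\frac{1}{\sqrt{2}}$-superposition.

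First I would settle the base cases $n=3,4$ ($n'=1$) directly. For $n=3$ one iterates $\tilde{B}_2$: when $\alpha_2\le\alpha_1$ it repeatedly moves mass from position $2$ to position $1$, terminating at $b_{(\alpha_1+\alpha_2)\eb_1+\alpha_3\eb_3}$; when $\alpha_2>\alpha_1$ it first moves mass from position $2$ to position $3$ until reaching the boundary $\alpha_2=\alpha_1+1$, where the superposition branch lets one continue (via the summand $b_{\alpha+\eb_2-\eb_3}$) and then move $2\to1$ down to $b_{2\alpha_1\eb_1+(\alpha_3+\alpha_2-\alpha_1)\eb_3}$. In both cases the surplus pushed to the right is exactly $\gamma_1=(\alpha_2-\alpha_1)_+$. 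The case $n=4$ is analogous, the central $a_{i,\tau(i)}=2$ operator $\tilde{B}_{n'+1}$ now relocating mass two at a time.

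For the inductive step (induction on $n'$) I would peel the comparison from the centre outward, which is precisely the order in which the $\gamma_j$ are defined. Using the central operators---the $a_{i,\tau(i)}=-1$ pair $\tilde{B}_{n'},\tilde{B}_{n'+1}$ when $n$ is odd, and the $a_{i,\tau(i)}=2$ operator $\tilde{B}_{n'+1}$ together with its neighbours when $n$ is even---one sweeps the central mass outward, depositing a surplus $\gamma_{n'}=(\alpha_{n-n'}-\alpha_{n'})_+$ on the right-hand side; the truncation $(\,\cdot\,)_+$ reflects that the right side can only receive a nonnegative surplus. What remains is a configuration with one fewer mirror pair, again of quasi-split type A.3, in which the surplus $\gamma_{n'}$ has been added on the right and now feeds the next comparison. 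Applying the induction hypothesis yields $\gamma_{n'-1}=(\gamma_{n'}+\alpha_{n-(n'-1)}-\alpha_{n'-1})_+$, and iterating gives the full recursion down to $\gamma_1$, hence the asserted $\alpha'=(\sum_{j=1}^{n-1}\alpha_j-\gamma_1)\eb_1+(\alpha_n+\gamma_1)\eb_n$.

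The main obstacle is the central sweep. In the odd case the operators $\tilde{B}_{n'},\tilde{B}_{n'+1}$ pass through the $\frac{1}{\sqrt{2}}$-superposition branches, so at each boundary configuration one must choose the summand that keeps a path toward $b_{\alpha'}$ and then check that the net transfer across the centre is exactly $\gamma_{n'}$; in the even case the $a_{i,\tau(i)}=2$ operator relocates mass only two at a time (the $\theta$-dependence), so clearing the central position can leave a parity remainder whose placement must be shown to agree with the formula. One also needs a monotonicity argument guaranteeing that, once deposited, the surplus is never pushed back, which is what legitimises both the truncation $(\,\cdot\,)_+$ and the clean reduction to a type A.3 problem of smaller rank. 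The remaining points---that the residual configuration is a genuine instance of the induction hypothesis and that the powers $x^d$ track correctly---are routine.
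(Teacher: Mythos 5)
Your strategy coincides with the paper's: sweep all mass onto positions $1$ and $n$ by an explicit sequence of $\tilde{B}_i$ ($i\neq 0$), observe that the rightward surplus accumulates exactly according to the recursion defining $\gamma_{n'},\ldots,\gamma_1$, and then invoke Lemma \ref{lem:O2} to reverse the path, tolerating the extra summands produced by the $\frac{1}{\sqrt{2}}$-branches. The paper implements this as one explicit chain for the representative case $n=5$, namely $\tilde{B}_4^{\,\cdot}(\tilde{B}_3\tilde{B}_2)^{\,\cdot}\tilde{B}_4^{\,\cdot}\tilde{B}_3^{\alpha_3}$, and your $n=3$ computation agrees with the analogous single-operator sweep, including the correct use of the superposition branch at the boundary $\alpha_2=\alpha_1+1$. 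The only substantive difference is that you package the computation as an induction on $n'$ rather than exhibiting the chain directly.

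That said, two steps of your argument do not survive being made precise. (1) The inductive step is not a clean reduction to type A.3 of rank $n-2$: after the central position is emptied, the residual mass sits at the two positions adjacent to the centre, and part of it must later be transported \emph{back through} the centre to reach position $1$ (this is exactly the paper's $(\tilde{B}_3\tilde{B}_2)^{\alpha_4+\gamma_2-\gamma_1}$ step, which refills and re-empties position $3$). So the lemma for $n-2$ cannot be invoked as a black box; one needs either the explicit chain or a strengthened induction hypothesis about configurations supported on $\{1,\ldots,j\}\cup\{n-j,\ldots,n\}$ that also records where each block of mass must end up. (2) For $n$ even the central operator $\tilde{B}_{n'+1}$ is of type $a_{i,\tau(i)}=2$ and squares to the identity on basis vectors (it acts as $\tilde{F}$ or $\tilde{E}$ according to the parity of $\alpha_{n'+1}$), so it cannot ``relocate mass two at a time'' on its own; a net transfer requires interleaving it with $\tilde{B}_{n'}$ or $\tilde{B}_{n'+2}$. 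Neither defect is fatal, since the paper's explicit computation shows the sweep can be carried out, but as written your induction does not close.
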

\begin{proof}
We give the proof for $n=5$ only. Note that $n'=2,n-n'=3$ in this case. 
Applying $\tilde{B}_3$, we have 
\[
\tilde{B}_3^{\alpha_3}b_{\alpha}\propto b_{\alpha'}+\cdots,\quad
\alpha'=(\alpha_1,\alpha_2+\alpha_3-\gamma_2,0,\alpha_4+\gamma_2,\alpha_5).
\]
We used the symbol $\propto$ since the factor $\frac1{\sqrt{2}}$ may occur
upon application. Other terms may also appear. Similarly, we continue the calculation as
\begin{align*}
&\tilde{B}_4^{\alpha_2+\alpha_3-\gamma_2+\gamma_1}b_{\alpha'}\propto 
b_{\alpha''}+\cdots,\quad
\alpha''=(\alpha_1+\alpha_2+\alpha_3-\gamma_2,0,0,\alpha_4+\gamma_2-\gamma_1,
\alpha_5+\gamma_1),\\
&(\tilde{B}_3\tilde{B}_2)^{\alpha_4+\gamma_2-\gamma_1}b_{\alpha''}
\propto b_{\alpha'''}+\cdots,\quad \alpha'''=(\alpha_1+\alpha_2+\alpha_3-\gamma_2,
\alpha_4+\gamma_2-\gamma_1,0,0,\alpha_5+\gamma_1),\\
&\tilde{B}_4^{\alpha_4+\gamma_2-\gamma_1}b_{\alpha'''}
\propto b_{\alpha''''}+\cdots,\quad
\alpha''''=(\alpha_1+\alpha_2+\alpha_3+\alpha_4-\gamma_1,0,0,0,\alpha_5+\gamma_1).
\end{align*}
In view of Lemma \ref{lem:O2}, we find that $b_\alpha$ is generated from $b_{\alpha''''}$.
\end{proof}

\begin{lemma} \label{lem:O4}
\begin{itemize}
\item[(1)] 
If $a$ is even, $b_{(s-a)\eb_1+a\eb_n}$ and $b_{(s-a-1)\eb_1+(a+1)\eb_n}$ are connected
by $\tilde{B}_0$.
\item[(2)] 
If $n>3$ and $a$ is even, $b_{(s-a)\eb_1+a\eb_n}$ and $b_{(s-a-2)\eb_1+(a+2)\eb_n}$ are 
connected by applications of $\tilde{B}_i$'s.
\item[(3)] 
If $n=3$ and $a$ is odd, $b_{(s-a-1)\eb_1+(a+1)\eb_3}$ is represented as a 
linear combination of vectors which are generated from $b_{(s-a)\eb_1+a\eb_3}$ 
by applying $\tilde{B}_i$'s.
\end{itemize}
\end{lemma}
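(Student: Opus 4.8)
All three parts are verified by tracking the explicit $\imath$crystal operators $\tilde{B}_i$ recorded in Section \ref{sec:type A} on the boundary vectors $b_{(s-a)\eb_1+a\eb_n}$, so the plan is computational throughout; the only subtlety is keeping track of which branch of each $\tilde{B}_i$ is taken. For part (1) this is immediate. The node $0$ is of type $a_{0,\tau(0)}=2$ with $s_0=0$, and for $\alpha=(s-a)\eb_1+a\eb_n$ one has $\varphi_0(b_\alpha)=\alpha_n=a$. Since $a$ is even, $\theta(\alpha_n)=0$, so the $a_{i,\tau(i)}=2$ rule gives $\tilde{B}_0\,b_\alpha=\tilde{E}_0\,b_\alpha= x^{+1}b_{\alpha+\eb_n-\eb_1}=x^{+1}b_{(s-a-1)\eb_1+(a+1)\eb_n}$, which is exactly the asserted edge. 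This is a one-line check.

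\textbf{Part (2).} The plan is to exhibit an explicit sequence of $\tilde{B}_i$'s (interior nodes together with a single application of $\tilde{B}_0$) transporting two boxes from position $1$ to position $n$. First one box is pushed up the ascending chain $\tilde{B}_1,\tilde{B}_2,\ldots$, each step realizing $\tilde{F}_i$ as long as the relevant branch condition ($\alpha_i>\alpha_{\tau(i)}$ for the $a=0$ nodes, the parity condition for $a=2$ nodes) holds; then $\tilde{B}_0$ is applied, which is the clean $\tilde{E}_0$ because $\alpha_n$ is still even at that moment; finally the descending node $\tilde{B}_{n-1}$ acts as $\tilde{F}_{n-1}$ once $\alpha_{n-1}>\alpha_1$, depositing the box at position $n$. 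For $n=4$ this is the chain $b_{(2,0,0,0)}\xrightarrow{\tilde{B}_1}b_{(1,1,0,0)}\xrightarrow{\tilde{B}_2}b_{(1,0,1,0)}\xrightarrow{\tilde{B}_0}b_{(0,0,1,1)}\xrightarrow{\tilde{B}_3}b_{(0,0,0,2)}$, and the general $(m,0,\ldots,0,a)$ case is obtained by the same recipe; one then reduces general $n$ to small $n$ by the folding argument already used in Lemmas \ref{lem:O1} and \ref{lem:O3}. The essential point to be checked is that the two boundary states are \emph{not} connected by interior nodes alone (the coupling forces $\tilde{B}_1$ to be $\tilde{F}_1$ only when $\alpha_1>\alpha_{n-1}$, while $\tilde{B}_{n-1}$ is $\tilde{F}_{n-1}$ only when $\alpha_{n-1}>\alpha_1$), so the single use of $\tilde{B}_0$ is unavoidable and is precisely what advances the parameter by $2$.

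\textbf{Part (3) and the main obstacle.} For $n=3$ the interior nodes $1,2$ form an $a=-1$ pair with $s_1=0$, $s_2=1$, so their $\tilde{B}$-action produces $\tfrac1{\sqrt2}$-factors and two-term outputs. The plan is a direct computation: starting from $b_{(m,0,a)}$ with $m=s-a$ even and $a$ odd, apply $\tilde{B}_1$, then $\tilde{B}_0$, then $\tilde{B}_2$ repeatedly until the sum-branch of $\tilde{B}_2$ (the case $\alpha_2=\alpha_1+1>0$) fires and yields a generated vector of the form $\tfrac12\bigl(b_{(s-a-1)\eb_1+(a+1)\eb_n}+b_{(\ast)}\bigr)$; e.g.\ for $s=3,a=1$ one obtains $\tfrac12\bigl(b_{(1,0,2)}+b_{(0,0,3)}\bigr)$. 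This exhibits $b_{(s-a-1)\eb_1+(a+1)\eb_n}$ as a linear combination of vectors generated from $b_{(s-a)\eb_1+a\eb_n}$, which is exactly the content of (3), and explains why for $n=3$ only this weaker statement holds: there is no room to separate the two summands into distinct graph vertices. The main obstacle in all of this is the branch bookkeeping through the coupled and $a=-1$ middle nodes for odd $n$, where intermediate states carry $\tfrac1{\sqrt2}$-scalars and sums; this is handled using the reversibility and sum-compatibility recorded in Lemma \ref{lem:O2}, which guarantees that the $\tfrac1{\sqrt2}$-factors do not obstruct the connectivity of the endpoints.
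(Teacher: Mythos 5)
Part (1) is correct and agrees with the paper. The other two parts each contain a genuine gap.

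In part (2), your explicit chain breaks at its last step. After pushing one box to position $n-1$ and applying $\tilde{B}_0$ you are at $b_{(m-2)\eb_1+\eb_{n-1}+(a+1)\eb_n}$ with $m=s-a$, and you need $\tilde{B}_{n-1}$ to act as $\tilde{F}_{n-1}$. But $\tau(n-1)=1$, and the $a_{i,\tau(i)}=0$ rule selects $\tilde{F}_{n-1}$ only when $\alpha_{n-1}>\alpha_1$, i.e. $1>m-2$; for $m\ge 3$ this fails, $\tilde{B}_{n-1}$ acts as $\tilde{E}_{1}$, and since $\alpha_2=0$ it annihilates the vector. Your $n=4$ example works only because $m=2$ there, and the appeal to ``reduction to small $n$ by folding'' is not an argument about crystal operators (Lemma \ref{lem:O1} folds the weight inequalities, not the $\tilde{B}_i$-action); for odd $n$ your path also passes through the $a_{i,\tau(i)}=-1$ pair $n',n'+1$, where $\tilde{B}_{n'+1}$ on a single box produces a two-term sum, so ``connectedness'' needs the extra care of Lemma \ref{lem:O2}(2),(3). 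The paper sidesteps both issues: it takes the $\tilde{B}_0$-edge between $b_{\eb_1+(s-a-2)\eb_2+\eb_{n-1}+a\eb_n}$ and $b_{(s-a-2)\eb_2+\eb_{n-1}+(a+1)\eb_n}$ --- the residual $s-a-2$ boxes sit at position $2$, not $1$, which is precisely what makes $\gamma_1=1$ in Lemma \ref{lem:O3} and lets that lemma deliver the second box to position $n$ --- and invokes Lemma \ref{lem:O3} for the interior connectivity, checking that the problematic branches of Lemma \ref{lem:O2} do not occur along that route.

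In part (3), exhibiting the single generated element $\tfrac1{\sqrt2}\bigl(b_{(s-a-1)\eb_1+(a+1)\eb_3}+b_{(\ast)}\bigr)$ does not show that $b_{(s-a-1)\eb_1+(a+1)\eb_3}$ lies in the span of vectors generated from $b_{(s-a)\eb_1+a\eb_3}$: knowing that $v+w$ is in that span says nothing about $v$ alone. One needs a second, linearly independent combination of the same two summands. The paper obtains it by applying $\tilde{B}_0$ to the sum: the two summands have third entries of opposite parity, so $\tilde{B}_0$ swaps them with different powers of $x$, yielding an independent combination, after which $v$ and $w$ are individually in the span and one finishes by applying a power of $\tilde{B}_2$. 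This separation step is the whole point of statement (3) and is missing from your argument; your closing remark that ``there is no room to separate the two summands'' is in fact the opposite of what the proof requires.
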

\begin{proof}
(1) is immediate. For (2), consider two vectors $b_{\eb_1+(s-a-2)\eb_2+\eb_{n-1}+a\eb_n}$
and $b_{(s-a-2)\eb_2+\eb_{n-1}+(a+1)\eb_n}$ which are connected by $\tilde{B}_0$. 
On the other hand, by Lemma \ref{lem:O3}, the former can be connected with 
$b_{(s-a)\eb_1+a\eb_n}$ and the latter $b_{(s-a-2)\eb_1+(a+2)\eb_n}$. Note that
cases (2),(3) of Lemma \ref{lem:O2} do not occur during the applications.

To prove (3), set $c=s-a$ and divide the cases whether $c$ is even or odd. Suppose 
$c$ is even. Then we have
\[
\tilde{B}_2^2\tilde{B}_0\tilde{B}_1^{c/2+1}b_{c\eb_1+a\eb_3}
=\frac1{\sqrt{2}}(b_{c/2\eb_1+(c/2-1)\eb_2+(a+1)\eb_3}
+b_{(c/2-1)\eb_1+(c/2-1)\eb_2+(a+2)\eb_3}).
\]
Applying $\tilde{B}_0$, we obtain a linear combination of the same two vectors, but
with different coefficients. Hence, one obtains $b_{c/2\eb_1+(c/2-1)\eb_2+(a+1)\eb_3}$
as a linear combination of vectors which are generated from $b_{c\eb_1+a\eb_3}$ 
by applying $\tilde{B}_i$'s. Applying $\tilde{B}_2^{c/2-1}$, one obtains 
$b_{(c-1)\eb_1+(a+1)\eb_3}$. 
Next suppose $c$ is odd. We have
\begin{align*}
&\tilde{B}_1^{(c-1)/2}b_{c\eb_1+a\eb_3}=b_{(c+1)/2\eb_1+(c-1)/2\eb_2+a\eb_3},\\
&\tilde{B}_2\tilde{B}_1^{(c+1)/2}b_{c\eb_1+a\eb_3}
=\frac1{\sqrt{2}}(b_{(c+1)/2\eb_1+(c-1)/2\eb_2+a\eb_3}
+b_{(c-1)/2\eb_1+(c-1)/2\eb_2+(a+1)\eb_3}).
\end{align*}
Hence, one obtains $b_{(c-1)/2\eb_1+(c-1)/2\eb_2+(a+1)\eb_3}$
as a linear combination of vectors which are generated from $b_{c\eb_1+a\eb_3}$ 
by applying $\tilde{B}_i$'s. Applying $\tilde{B}_2^{(c-1)/2}$ further, one obtains 
$b_{(c-1)\eb_1+(a+1)\eb_3}$. 
\end{proof}

\begin{lemma} \label{lem:O5}
Under a suitable normalization, we have
\[
\K_{\B_s}(x)(b_{s\mathbf{e}_n}) = \begin{cases}
  b_{\mathbf{e}_1 + (s-1)\mathbf{e}_n} & \text{ if $s$ is odd}, \\
  b_{s\mathbf{e}_n} & \text{ if $s$ is even}.
\end{cases}
\]
\end{lemma}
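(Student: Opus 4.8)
The plan is to compute the quantum $K$-matrix $K_{\W_s}(x)$ directly on the extremal vector $v_{s\mathbf{e}_n}=v_n^{\ot s}$ and then pass to the crystal limit, in the same spirit as the anchoring computation of Lemma \ref{lem: K(alpha) AI} in type A.1. The essential simplification is that $v_{s\mathbf{e}_n}$ is the fusion basis vector $v_{s\mathbf{e}_n}$ (its fusion coefficient is $1$, since there is a unique $\mathbf{i}$ of that weight and it has no inversions) and is the lowest weight vector of the node-$0$ string; consequently the whole computation stays inside $V:=\mathrm{span}\{v_\alpha\mid \mathrm{supp}(\alpha)\subseteq\{1,n\}\}$. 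Indeed, on $\{1,n\}$-entries $K_\W$ merely swaps $v_1\leftrightarrow v_n$, and $R_{\W,\W}(x,y)$ maps $\{v_1,v_n\}^{\ot 2}$ into itself (it fixes $v_1\ot v_1$ and $v_n\ot v_n$ and mixes $v_1\ot v_n$ with $v_n\ot v_1$), so every factor $K_1$ and $R^*_{i,i+1}$ appearing in \eqref{rec for K} preserves $V$. Writing $w_k:=v_{k\mathbf{e}_1+(s-k)\mathbf{e}_n}$ for the fusion basis of $V\cap\W_s$, it follows that $K_{\W_s}(x)(v_{s\mathbf{e}_n})$ is a $\C(q,x)$-combination of the $w_k$.

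To carry out the computation I would argue by induction on $s$ using \eqref{rec for K}: apply $K_{\W_{s-1}}(q_r^{-1}x)\ot 1$ to $v_n^{\ot s}$, then the chain $R^*_{s-1,s}\cdots R^*_{1,2}$, and finally $K_1$, at each stage using the explicit two-by-two action of $R_{\W,\W}$ on $\{v_1,v_n\}$-tensors recorded in Section \ref{sec:type A}. This produces $K_{\W_s}(x)(v_{s\mathbf{e}_n})=\sum_{k\equiv s\,(2)}c_k(q,x)\,w_k$ with explicit rational coefficients, and the content is that, after a normalization making the reduction lattice-valued and nonzero, the coefficient of $w_{\theta(s)}$ tends to a nonzero constant as $q\to\infty$ while all others tend to $0$. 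Hence the crystal image is $b_{\theta(s)\mathbf{e}_1+(s-\theta(s))\mathbf{e}_n}$, which is $b_{s\mathbf{e}_n}$ for even $s$ and $b_{\mathbf{e}_1+(s-1)\mathbf{e}_n}$ for odd $s$, as claimed; the cases $s=1,2,3$ already exhibit this, the $x$-degree of the surviving coefficient being $0$ so that no spectral shift appears.

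The parity can be organized beforehand so that the computation is only truly needed for odd $s$. Since $K_{\W_s}(x)$ commutes with $B_0$, its crystal reduction commutes with $\tilde{B}_0$ and preserves $\beta_0$; because it preserves $V$, the image of $b_{s\mathbf{e}_n}$ is $x^c b_{k\mathbf{e}_1+(s-k)\mathbf{e}_n}$ for some $k,c$, subject to $\beta_0(b_{k\mathbf{e}_1+(s-k)\mathbf{e}_n})=k+\theta(s-k)=\beta_0(b_{s\mathbf{e}_n})=\theta(s)$. For even $s$ this forces $k=0$, so a suitable normalization fixing $c=0$ already gives $\K_{\B_s}(x)(b_{s\mathbf{e}_n})=b_{s\mathbf{e}_n}$; for odd $s$ it leaves exactly $k\in\{0,1\}$, i.e.\ $b_{s\mathbf{e}_n}$ and $b_{\mathbf{e}_1+(s-1)\mathbf{e}_n}$, which the explicit computation above separates. (One checks that $\tilde{B}_0$-equivariance alone does not distinguish these two, so some genuine computation is unavoidable.)

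The main obstacle is precisely the bookkeeping of the coefficients $c_k(q,x)$ through the iterated fusion and the verification of their $q\to\infty$ behavior. The cleanest route is to guess a closed form for $c_k$ — the low-rank cases suggest products of factors of the shape $\tfrac{1-q^{2a}x^{-2}}{1-q^{2b}x^{-2}}$ — and prove it by induction, whereupon the limits are immediate. Finally I would emphasize the structural role of this lemma: it fixes the normalization anchor for the A.3 case, so that once the value at $b_{s\mathbf{e}_n}$ is known, the connectedness of $\B_s$ (Proposition \ref{prop: connectedness AIII}) together with the intertwining of $\K_{\B_s}(x)$ with the operators $\tilde{B}_i$ (Proposition \ref{prop: comb K is icry isom}) propagates it to all of $\B_s$, exactly as the pair consisting of Lemma \ref{lem: K(alpha) AI} and connectedness did in type A.1.
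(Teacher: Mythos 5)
Your overall strategy --- anchor the value of $\K_{\B_s}(x)$ at $b_{s\eb_n}$ by an explicit quantum-level computation through the fusion construction --- is different from the paper's, and the part of it that you do carry out (the reduction to the $\{v_1,v_n\}$-span $V$, the swap $v_1\leftrightarrow v_n$ under $K_\W$, the stability of $V^{\ot 2}$ under $R_{\W,\W}$, and the $\beta_0$-parity argument showing $k=0$ for $s$ even and $k\in\{0,1\}$ for $s$ odd) is sound as far as it goes. But the heart of the lemma for odd $s$ --- deciding between $b_{s\eb_n}$ and $b_{\eb_1+(s-1)\eb_n}$ --- is exactly the step you leave as a plan: you assert that after normalization the coefficient $c_{\theta(s)}(q,x)$ survives as $q\to\infty$ while all other $c_k$ vanish, offer $s=1,2,3$ as evidence, and propose to ``guess a closed form and prove it by induction.'' That closed form is never produced, so the proposal does not actually prove the statement. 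A second, smaller gap: in your parity paragraph you assert that the crystal reduction of $K_{\W_s}(x)(v_{s\eb_n})$ is a \emph{single} element $x^c b_{k\eb_1+(s-k)\eb_n}$; preservation of $V$ only gives a linear combination of the $b_{k\eb_1+(s-k)\eb_n}$, which is precisely why the hard analytic claim about the $c_k$ cannot be bypassed in your setup.

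For comparison, the paper avoids all coefficient bookkeeping. It normalizes at a one-dimensional fixed weight space ($v_{s\eb_{n'+1}}$ for $n$ odd, or the $B_{n'+1}$-eigenvector $u$ with $u+q^{-1}\mathcal{L}_s=b_{\eb_{n'+1}+(s-1)\eb_{n'+2}}+b_{s\eb_{n'+2}}$ for $n$ even), transports this known value by $\imath$crystal operators $\tilde{B}_{n-1}^s\cdots\tilde{B}_{n'+1}^s$ to obtain
\[
\K_{\B_s}(x)\bigl(b_{\eb_1+(s-1)\eb_n}+b_{s\eb_n}\bigr)=b_{\eb_1+(s-1)\eb_n}+b_{s\eb_n},
\]
and then separates the two summands by applying $\tilde{B}_0$ (resp.\ $\tilde{B}_0^2$): since $\tilde{B}_0$ attaches different powers of $x$ to the two terms, comparing the resulting identity with the displayed one isolates each image. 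This is exactly the mechanism that resolves the $k\in\{0,1\}$ ambiguity you correctly identified --- your remark that ``$\tilde{B}_0$-equivariance alone does not distinguish these two'' is true for equivariance applied to the unknown image of the single vector $b_{s\eb_n}$, but false once one first pins down the image of the \emph{sum}. If you want to salvage your route, either carry out the inductive computation of the $c_k(q,x)$ in full, or replace your final step by the paper's $\tilde{B}_0$-on-the-sum argument.
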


\begin{proof}
Let us first consider the case when $n$ is odd.
By weight consideration, we can normalize the quantum $K$-matrix as
$$
K_{\W_s}(x)(v_{s\mathbf{e}_{n'+1}}) = v_{s\mathbf{e}_{n'+1}}.
$$
Hence, we have
$$
\K_{\B_s}(x)(b_{s\mathbf{e}_{n'+1}}) = b_{s\mathbf{e}_{n'+1}}.
$$
Applying $\sqrt{2} \tilde{B}_{n-1}^s \tilde{B}_{n-2}^s \cdots \tilde{B}_{n'+2}^s \tilde{B}_{n'+1}^s$ on the both sides, we obtain
\begin{align}
  \K_{\B_s}(x)(b_{\mathbf{e}_1 + (s-1)\mathbf{e}_n} + b_{s\mathbf{e}_n}) = b_{\mathbf{e}_1 + (s-1)\mathbf{e}_n} + b_{s\mathbf{e}_n}. \label{eq: comb K on special vector AIII n odd}
\end{align}

When $s$ is odd, applying $\tilde{B}_0$ on the both sides of \eqref{eq: comb K on special vector AIII n odd}, we have
\[
\K_{\B_s}(x)(xb_{s\mathbf{e}_1} + x^{-1}b_{\mathbf{e}_1 + (s-1)\mathbf{e}_n}) = x^{-1} b_{s\mathbf{e}_n} + x b_{\mathbf{e}_1 + (s-1)\mathbf{e}_n}.
\]
Comparing this with \eqref{eq: comb K on special vector AIII n odd}, the assertion follows.

Similarly, when $s$ is even, applying $\tilde{B}_0^2$ on the both sides of \eqref{eq: comb K on special vector AIII n odd}, we have
\[
\K_{\B_s}(x)(b_{\mathbf{e}_1 + (s-1)\mathbf{e}_n}) = b_{\mathbf{e}_1 + (s-1)\mathbf{e}_n}.
\]
Comparing this with identity \eqref{eq: comb K on special vector AIII n odd}, the assertion follows.

Next, let us consider the case when $n$ is even.
As in type A.1, from the consideration of $\beta_{n'+1}$, we can normalize the quantum $K$-matrix as
\[
\K_{\B_s}(b_{\mathbf{e}_{n'+1} + (s-1)\mathbf{e}_{n'+2}} + b_{s\mathbf{e}_{n'+2}}) = b_{\mathbf{e}_{n'+1} + (s-1)\mathbf{e}_{n'+2}} + b_{s\mathbf{e}_{n'+2}}.
\]
Applying $\tilde{B}_{n-1}^s \tilde{B}_{n-2}^s \cdots \tilde{B}_{n'+2}^s$ on both sides, we obtain
\begin{align*}
  \K_{\B_s}(x)(b_{\mathbf{e}_1 + (s-1)\mathbf{e}_n} + b_{s\mathbf{e}_n}) = b_{\mathbf{e}_1 + (s-1)\mathbf{e}_n} + b_{s\mathbf{e}_n}.
\end{align*}
Now, as in the $n$ being odd case, applying $\tilde{B}_0$ when $s$ is odd or $\tilde{B}_0^2$ when $s$ is even, we obtain the desired identity.
\end{proof}

\begin{proposition}
The combinatorial $K$-matrix $\K_{\B_s}(x)$ is given by
\begin{equation}  \label{K_AIII}
\K_{\B_s}(x)(x^d b_\alpha)=x^{d+I(\alpha)}
\left\{\begin{array}{ll}
b_\alpha&(s\text{ is even})\\
b_{\alpha+(-1)^{i-1}(\eb_{i'}-\eb_{n-i'+1})}&(s\text{ is odd})
\end{array}\right.,
\end{equation} 
where we assume for $\alpha$ Case $(i)$ $(1\le i\le n)$ holds and $i'=\lfloor\frac{i+1}2\rfloor$. The energy function $I(\alpha)$ is given by
\[
I(\alpha)=-2\lfloor\frac{\alpha_n+\gamma_1+\chi(\text{$s$ is even})}2\rfloor,
\]
where $\gamma_1$ was defined in Lemma \ref{lem:O3}.
\end{proposition}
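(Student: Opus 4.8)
The plan is to show that the explicit map appearing on the right-hand side of \eqref{K_AIII}, which I denote by $\K'$, coincides with $\K_{\B_s}(x)$. The argument runs parallel to the type A.1 case. Since $K_{\W_s}(x)$ is $\U^\imath$-linear, its normalized reduction $\bar K_{\W_s}(x)$ commutes with the $\imath$crystal operators $\tilde B_i$ (cf. Proposition \ref{prop: comb K is icry isom}), and by Proposition \ref{prop: connectedness AIII} the $\imath$crystal $\B_s$ is connected; hence two $\imath$crystal morphisms that agree on a single element must agree everywhere. It therefore suffices to check: (i) that $\K'$ is a well-defined bijection commuting with every $\tilde B_i$, and (ii) that $\K'$ agrees with $\K_{\B_s}(x)$ on the ground element $b_{s\eb_n}$. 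The well-definedness of $\K'$ is immediate from Lemma \ref{lem:O1}: each $\alpha$ lies in exactly one Case $(i)$, so the integers $i$ and $i'=\lfloor(i+1)/2\rfloor$ are determined unambiguously, and bijectivity is a routine Case-by-Case verification of the explicit inverse.

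For (ii) I would evaluate $\K'$ at $\alpha=s\eb_n$, noting that $\gamma_1=0$ there. When $s$ is odd, $s\eb_n$ lies in Case $(1)$, so $i'=1$ and \eqref{K_AIII} yields $\K'(b_{s\eb_n})=x^{-(s-1)}b_{\eb_1+(s-1)\eb_n}$; when $s$ is even the Case division is not used and $\K'(b_{s\eb_n})=x^{-s}b_{s\eb_n}$. In either case the weight part coincides with the value computed in Lemma \ref{lem:O5}, the two prescriptions differing only by the global power of $x$ permitted by the uniqueness lemma. Fixing that overall constant makes $\K'$ and $\K_{\B_s}(x)$ agree at the base point; together with connectedness this also shows that $\bar K_{\W_s}(x)$ maps the crystal lattice into the target lattice, so that the combinatorial $K$-matrix is genuinely defined.

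The core of the work is (i). I would organize it by node type. For $a_{i,\tau(i)}=2$ (the node $0$, and $n'+1$ when $n$ is even) and for $a_{i,\tau(i)}=0$ (the remaining nodes), the operator $\tilde B_i$ sends a single basis vector to a single basis vector, so the check is a finite bookkeeping: using the inequalities of Lemma \ref{lem:O1} one records into which Case $\tilde B_i$ moves a given $\alpha$ and confirms that the weight shift $\pm(\eb_{i'}-\eb_{n-i'+1})$ is transported correctly. As for the power of $x$, only $\tilde B_0$ changes the affine degree $d$; consequently the energy identity reduces to verifying that the quantity $\alpha_n+\gamma_1$ is invariant under every interior operator $\tilde B_i$ ($i\ne 0$), where the recursive definition of $\gamma_1$ in Lemma \ref{lem:O3} is precisely what cancels the changes that interior operators (such as $\tilde B_1$) induce in $\alpha_n$ and $\alpha_{n-1}$, while under $\tilde B_0$ the combination of $\alpha_n+\gamma_1$, the floor, and the parity offset $\chi(s\text{ is even})$ produces exactly the shift of $I$ forced by the morphism condition.

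The main obstacle I anticipate is the pair of nodes $i=n',n'+1$ with $a_{i,\tau(i)}=-1$, which occur when $n$ is odd. There $\tilde B_{n'}$ and $\tilde B_{n'+1}$ may output $\tfrac{1}{\sqrt2}$-weighted sums of two basis vectors, and they act exactly where the Case division switches between the odd-numbered and even-numbered Cases. To handle them I would use the relations in Lemma \ref{lem:O2}(2),(3), which pair up these boundary Cases, and verify that applying $\K'$ to each side produces matching pairs of terms carrying identical $\tfrac{1}{\sqrt2}$ coefficients and identical energies. Reconciling the parity condition that separates the Cases with the weight shift $\eb_{i'}-\eb_{n-i'+1}$ and with the $\gamma_1$-dependence of $I$ at these central nodes is the delicate point, and I expect the Case division of Lemma \ref{lem:O1} to have been designed precisely so that this matching is forced. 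Once (i) and (ii) are in place, connectedness of $\B_s$ gives $\K'=\K_{\B_s}(x)$ on all of $\B_s(x)$, which simultaneously establishes the formula \eqref{K_AIII} and the existence of the combinatorial $K$-matrix.
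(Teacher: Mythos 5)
Your proposal follows essentially the same route as the paper: the paper's proof likewise combines the connectedness of the $\imath$crystal (Proposition \ref{prop: connectedness AIII}), the evaluation at the base point $b_{s\eb_n}$ (Lemma \ref{lem:O5}), and a case-by-case verification that the explicit formula commutes with every $\tilde{B}_i$ (Lemma \ref{lem: commutativity AIII}), including the same reduction of the commutativity check via the symmetry $\tilde{B}_i \leftrightarrow \tilde{B}_{n-i}$ and the special treatment of the nodes $n',n'+1$ where $\frac{1}{\sqrt 2}$-weighted sums appear. Your identification of the delicate points (invariance of $\alpha_n+\gamma_1$ under interior operators, the $a_{i,\tau(i)}=-1$ nodes for $n$ odd, and the overall $x^d$ normalization freedom) matches the structure of the paper's argument.
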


\begin{proof}
In view of Proposition \ref{prop: connectedness AIII} and Lemma \ref{lem:O5}, it suffices to show that the right hand side of 
\eqref{K_AIII} commutes with $\tilde{B}_i$ for any $i$, which is to be proved next.
\end{proof}

\begin{lemma} \label{lem: commutativity AIII}
The right hand side of \eqref{K_AIII} commutes with $\tilde{B}_i$ for any $i$.
\end{lemma}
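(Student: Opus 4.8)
The plan is to verify the identity $\tilde{B}_j\circ\K_{\B_s}(x)=\K_{\B_s}(x)\circ\tilde{B}_j$ directly on each $x^db_\alpha$, where on the left $\tilde{B}_j$ acts on the target $\B_s(x^{-1})$ and on the right on the source $\B_s(x)$, both governed by the explicit $\imath$crystal formulas recalled in Section~\ref{sec:type A} (with the $x$-powers of the target read in $x^{-1}$, so that the affine shift flips sign). Since those formulas depend only on $\alpha$ and on the value $a_{j,\tau(j)}\in\{2,0,-1\}$, the verification splits according to $j$: the affine node $j=0$ (type $2$); the central node $j=n'+1$ when $n$ is even (type $2$) and the pair $j=n',n'+1$ when $n$ is odd (type $-1$); and every remaining $j$, for which $\tau(j)=n-j\notin\{0,j\}$ and $a_{j,\tau(j)}=0$. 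By Lemma~\ref{lem:O2}(1), $\tilde{B}_{n-j}$ inverts $\tilde{B}_j$ along edges, so it suffices to treat $0\le j\le n'$, which roughly halves the casework. Throughout, the equalities are understood as identities of (possibly $\tfrac{1}{\sqrt2}$-scaled) elements, to accommodate the type $-1$ node.

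First I would dispose of even $s$. There \eqref{K_AIII} fixes the crystal element $b_\alpha$ and only shifts the spectral parameter by $I(\alpha)=-2\lfloor(\alpha_n+\gamma_1+1)/2\rfloor$, so commutativity reduces to a bookkeeping of powers of $x$. For every type-$0$ node the formulas carry no power of $x$, since $\delta_{j,0}=\delta_{n-j,0}=0$; hence the requirement becomes the scalar invariance $I(\tilde{B}_j\alpha)=I(\alpha)$. I would establish this by showing that $\alpha_n+\gamma_1$ is unchanged along each type-$0$ edge, using the recursion $\gamma_{j}=(\gamma_{j+1}+\alpha_{n-j}-\alpha_j)_+$ of Lemma~\ref{lem:O3}: for instance an application of $\tilde{E}_{\tau(j)}$ that raises $\alpha_n$ by one simultaneously lowers $\gamma_1$ by one. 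For $j=0$ the parameter shift $(-1)^{\theta(\alpha_n)}$ matches exactly the change in $-2\lfloor(\alpha_n+\gamma_1+1)/2\rfloor$ caused by $\alpha_n\mapsto\alpha_n\pm1$. The only delicate points are the boundary configurations in which some truncation $(\cdot)_+$ vanishes; each of these reduces to a short inequality-and-floor check.

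The substantial case is odd $s$, where \eqref{K_AIII} genuinely moves the crystal element, sending $\alpha$ in Case $(i)$ to $\alpha+(-1)^{i-1}(\eb_{i'}-\eb_{n-i'+1})$ with $i'=\lfloor(i+1)/2\rfloor$. The core of the argument is combinatorial: for each node $j$ and each Case $(i)$ I would read off, from the defining inequalities of the Cases and the action of $\tilde{B}_j$ on $\alpha$, into which Case $\tilde{B}_j\alpha$ falls, and then check that $\tilde{B}_j\bigl(\alpha+(-1)^{i-1}(\eb_{i'}-\eb_{n-i'+1})\bigr)$ and the image prescribed by \eqref{K_AIII} for $\tilde{B}_j\alpha$ coincide in $\B_s(x^{-1})$, energies included. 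Lemma~\ref{lem:O1} ensures the Case is well defined at every step, and the reduction of the $n$ case to the $(n-2)$ case used in its proof can be reused to cut most of these Case-transitions down to the base ranks $n=3$ and $n=4$.

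The main obstacle is the central node(s) in odd rank, where $a_{j,\tau(j)}=-1$: there $\tilde{B}_{n'}$ and $\tilde{B}_{n'+1}$ may output $\tfrac{1}{\sqrt2}$ times a single vector, or $\tfrac{1}{\sqrt2}$ times a sum of two vectors that lie in \emph{adjacent} Cases. One must then confirm that \eqref{K_AIII}, although defined vector by vector, is consistent with these superpositions and with the normalizing factors $\tfrac{1}{\sqrt2}$; here Lemma~\ref{lem:O2}(2),(3) is used to pair the two branches and to match coefficients. This, together with the proliferation of boundary cases among the Case inequalities, is where essentially all of the labor lies. Once the commutativity is established, it combines with the connectedness of $\B_s$ (Proposition~\ref{prop: connectedness AIII}) and the normalization of Lemma~\ref{lem:O5} to pin down $\K_{\B_s}(x)$ and thereby complete the proof of the proposition.
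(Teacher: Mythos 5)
Your proposal follows essentially the same route as the paper's proof: a direct node-by-node and Case-by-Case verification of commutativity from the explicit $\imath$crystal formulas, with Lemma \ref{lem:O2}(1) used to roughly halve the casework, the even-$s$ situation reduced to invariance of the energy $I(\alpha)$, and separate treatment of the affine node and the central node(s) where the $\tfrac{1}{\sqrt2}$-factors arise. The only notable difference is that the paper further exploits the observations that $\K(x^{-1})\K(x)=\mathrm{id}$ and that $\K$ swaps Cases $(2i+1)$ and $(2i+2)$ while $\tilde{B}_i$ ($i\neq 0$) preserves $I$, which lets it restrict to the odd-numbered Cases and to the eight configurations (a)--(h); your plan omits this reduction and would therefore carry a somewhat larger, but equivalent, body of checks.
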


\begin{proof}
We let $\K(x)$ denote the right hand side of \eqref{K_AIII}, and show $\K(x)$ comments with $\tilde{B}_i$ for any $i$.
If $s$ is even, $\K(1)$ is the identity. Hence, the commutativity is trivial except 
the case of $\tilde{B}_0$ where there is a change in the power of $x$. Since this case is close to when $s$ is odd, we omit its proof.

We assume $s$ is odd below. To reduce the number of cases to handle, we first note the following facts that can be checked easily from previous results.
\begin{itemize}
\item[(i)] 
If $n$ is odd and Case $(n)$ holds for $\alpha$, then $\K(1)(b_\alpha)=b_\alpha$.
\item[(ii)]
Suppose $i<n'$ if $n$ is even. 
If Case $(2i+1)$ (resp. $(2i+2)$) holds for $\alpha$, then for $\K(1)(b_\alpha)$ Case $(2i+2)$ (resp. $(2i+1)$) holds.
\item[(iii)]
If Case $(2i+1)$ or $(2i+2)$ holds for $\alpha$, then $I(\alpha)=-2\lfloor\frac{(\alpha_n+\cdots+\alpha_{n-i})-(\alpha_i+\cdots+\alpha_1)}2\rfloor$.
\item[(iv)]
$\tilde{B}_i$($i\ne0$) does not change the energy function, i.e., $I(\tilde{B}_i\alpha)=
I(\alpha)$.
\end{itemize}
From these facts, one verifies that $\K(x^{-1})\K(x)=\mathrm{id}$. Regarding them and Lemma \ref{lem:O2}(1), one notices
that we can restrict the cases for the proof of the commutativity of $\tilde{B}_i$ and $\K(x)$ to the following ones.
\begin{itemize}
\item[(a)]
$\tilde{B}_0$ and Case (1) holds.
\item[(b)]
$\tilde{B}_0$ and Case $(2i+1)$ holds for $0<i\le n'$.
\item[(c)]
$\tilde{B}_{i+1}$ and Case $(2i+1)$ holds for $0\le i<n'$.
\item[(d)]
$\tilde{B}_j$ and Case $(2i+1)$ holds for $j\ne i+1,1\le j\le n',0\le i\le n'$.
\item[(e)]
$\tilde{B}_{n'+1}$ and Case $(2n'+1)$ holds for $n$ even.
\item[(f)]
$\tilde{B}_{n'+1}$ and Case $(2i+1)$ holds for $0\le i<n'$ and $n$ even.
\item[(g)]
$\tilde{B}_{n'+1}$ and Case $(2n'-1)$ holds for $n$ odd.
\item[(h)]
$\tilde{B}_{n'+1}$ and Case $(2i+1)$ holds for $0\le i<n'-1$ and $n$ odd.
\end{itemize}
We prove only in the cases of (a),(c) and (g).

Let us show (a). Suppose $\alpha_n$ is odd. Then we have $\K(x)(b_\alpha)=x^{-\alpha_n+1}
b_{\alpha'},\tilde{B}_0b_\alpha=x^{-1}b_\alpha'$, where $\alpha'=\alpha+\eb_1-\eb_n$. Since
Case (2) holds for $\alpha'$, $\K(x)(\tilde{B}_0b_\alpha)=\tilde{B}_0\K(x)(b_\alpha)=
x^{-\alpha_n}b_\alpha$. Note that $\K(x)(b_\alpha)$ belongs to $\B_s(x^{-1})$. The case 
$\alpha_n$ is even is similar.

Next we show (c). Because of the fact (iv) above, one can ignore the dependence of $x$.
We have $\K(1)(b_\alpha)=b_{\alpha+\eb_{i+1}-\eb_{n-i}}$, and 
\[
\tilde{B}_{i+1}b_\alpha=\left\{
\begin{array}{ll}
b_{\alpha-\eb_{i+1}+\eb_{i+2}}\quad&(\alpha_{i+1}>\alpha_{n-i-1})\\
b_{\alpha+\eb_{n-i-1}-\eb_{n-i}}\quad&(\alpha_{i+1}=\alpha_{n-i-1}).
\end{array}\right.
\]
For the former case, Case $(2i+1)$ holds, whereas for the latter, Case $(2i+3)$ holds. 
In either case, $\K(1)(\tilde{B}_{i+1}b_\alpha)=b_{\alpha+\eb_{i+2}-\eb_{n-i}}$, which agrees with
$\tilde{B}_{i+1}\K(1)(b_\alpha)$.

Finally, we show (g). In this case, we have $\K(1)(b_\alpha)=b_{\alpha+\eb_{n'}-\eb_{n'+2}}$ and
$\tilde{B}_{n'+1}b_\alpha=b_{\alpha+\eb_{n'}-\eb_{n'+1}}$ since Case $(2n'-1)$ for $\alpha$ implies
$\alpha_{n'}\ge\alpha_{n'+1}$. Since Case $(2n'-1)$ holds also for $\tilde{B}_{n'+1}b_{\alpha}$,
one finds $\tilde{B}_{n'+1}\K(1)(b_\alpha)=\K(1)(\tilde{B}_{n'+1}b_\alpha)=b_{\alpha+2\eb_{n'}-\eb_{n'+1}-\eb_{n'+2}}$.
\end{proof}
\begin{example} \label{ex:reflectionA3}
Set $n=3$. Below is the graphical presentation when we apply the both hand sides of the combinatorial reflection equation 
\eqref{comb RE} on $x^0b_{(2,2,1)}\ot y^0b_{(2,1,0)}\in\B_5(x)\ot\B_3(y)$.
\[
\begin{picture}(300,185)(-32,5)
\put(0,20){
\put(0,-8){\line(0,1){170}}
\put(0,15){\put(0,30){\line(2,1){70}}\put(0,30){\vector(2,-1){70}}}
\put(0,0){\put(0,90){\line(2,3){35}}\put(0,90){\vector(2,-3){62}}}
\put(35,149){\tiny{$x^0(221)$}}
\put(3,56){\tiny{$y^2(120)$}}
\put(74,80){\tiny{$y^0(210)$}}
\put(14,76){\tiny{$x^0(320)$}}
\put(36,43){\tiny{$x^2(410)$}}
\put(18,19){\tiny{$y^2(120)$}}
\put(75,7){\tiny{$y^4(210)$}}
\put(65,-11){\tiny{$x^0(320)$}}
}
\put(115,85){$=$}
\put(160,20){
\put(0,-8){\line(0,1){170}}
\put(0,15){\put(0,30){\line(2,3){65}}\put(0,30){\vector(2,-3){30}}}
\put(0,5){\put(0,90){\line(2,1){70}}\put(0,90){\vector(2,-1){70}}}
\put(63,149){\tiny{$x^0(221)$}}
\put(74,129){\tiny{$y^0(210)$}}
\put(19,115){\tiny{$y^3(021)$}}
\put(41,92){\tiny{$x^{-3}(410)$}}
\put(6,76){\tiny{$y(021)$}}
\put(15,53){\tiny{$x^0(221)$}}
\put(74,55){\tiny{$y^4(210)$}}\put(32,-9){\tiny{$x^0(320)$}}
}
\end{picture}
\]
\end{example}

\subsection{Type A.4}
We consider the $\imath$quantum group of type A.4. In this case, $n$ should be even and we set $n'=n/2$. 
\begin{figure}[h]
$$
\vcenter{\xymatrix@R=1ex{
& \circ \ar@{-}[r]^<{1} \ar@{<->}[rdd] & \circ \ar@{-}[rd]^<{2} \ar@{<->}[ldd] \\
\circ \ar@{-}[ur]^<{0} \ar@{<->}[rrr] \ar@{-}[dr] & & & \circ \\
& \circ \ar@{-}[r]_<{5} & \circ \ar@{-}[ur]_<{4}_>{3} \\
}}
$$
\caption{Satake diagram of type A.4 when $n=6$.}
\end{figure}
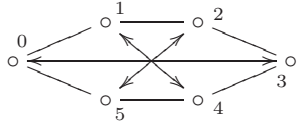
This type is twisted \eqref{twisted}, and the quantum $K$-matrix for $\W$ is given by
\[
K_\W(x)=\sum_jx^{\chi(j>n')}E_{j+n',j},
\]
which is a $\U^\imath$-linear map from $\W(x)$ to $\W^\vee(x^{-1})$.
\begin{proposition}
Define $\K:\B_s(x)\rightarrow \B^\vee_s(x^{-1})$ by $\K(x^db_\alpha)=x^{d+I(\alpha)}b_{\alpha'}^\vee$ where
\begin{align*}
&I(\alpha)=\min(\alpha_1,\alpha_{n'+1})-\sum_{j=n'+1}^n\alpha_j, \\
&\alpha'_i=\alpha_{i+1}+\alpha_{i+n'+1}+\max(\alpha_i,\alpha_{i+n'})-\alpha_i
-\max(\alpha_{i+1},\alpha_{i+n'+1}).
\end{align*}
Then, it is the combinatorial $K$-matrix $\K_{\B_s}(x)$.
\end{proposition}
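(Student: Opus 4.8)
The plan is to show that the explicitly defined map $\K$ coincides with the combinatorial $K$-matrix $\K_{\B_s}(x)$ by exploiting the fact (Proposition \ref{prop: comb K is icry isom}) that the latter is an isomorphism of $\imath$crystals. Concretely I would establish four things: (1) $\K$ is a bijection $\B_s(x)\to\B_s^\vee(x^{-1})$; (2) $\K$ commutes with every $\tilde{B}_i$, i.e. it is itself an $\imath$crystal isomorphism; (3) the $\imath$crystal $\B_s$ is connected; and (4) after a suitable normalization of the quantum $K$-matrix, $\K$ agrees with $\K_{\B_s}(x)$ at one distinguished vector. Granting these, connectedness forces any two $\imath$crystal isomorphisms that agree at a single vertex to agree at every vertex, so $\K=\K_{\B_s}(x)$; in particular $\K_{\B_s}(x)$ is a bijection, so the combinatorial $K$-matrix exists and equals $\K$ as a byproduct. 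Throughout, recall that in type A.4 we have $\tau(i)=i+n'$ and $a_{i,\tau(i)}=0$ for all $i$, so each $\tilde{B}_i$ acts simply as $\tilde{F}_i$ (when $\alpha_i>\alpha_{i+n'}$) or $\tilde{E}_{\tau(i)}$ (when $\alpha_i\le\alpha_{i+n'}$), with no $\tfrac{1}{\sqrt2}$-factors; this is what makes the present case cleaner than type A.3.

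For step (1) I would record the structural identities satisfied by $\alpha\mapsto\alpha'$. Writing $m_i=\max(\alpha_i,\alpha_{i+n'})$ and $\mu_i=\min(\alpha_i,\alpha_{i+n'})$, a direct computation gives $\mu'_i=\mu_{i+1}$ and $\alpha'_i-\alpha'_{i+n'}=\alpha_{i+n'}-\alpha_i$; that is, the pairwise minima merely shift by one while the within-pair difference flips sign. From these one recovers $\alpha$ from $\alpha'$ explicitly, which proves bijectivity of the $\alpha$-part, shows $\alpha'_i\ge0$ and $\sum_i\alpha'_i=s$, and makes the inverse transparent (the power of $x$ being recovered by inverting $d\mapsto d+I(\alpha)$). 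For step (4) I would check directly that $\K(b_{s\mathbf{e}_1})=b^\vee_{s\mathbf{e}_{n'+1}}$ with $I(s\mathbf{e}_1)=0$, matching the action $v_j\mapsto x^{\chi(j>n')}v^\vee_{j+n'}$ of $K_\W(x)$ on the highest vector; normalizing $K_{\W_s}(x)$ by $K_{\W_s}(x)(v_{s\mathbf{e}_1})=v^\vee_{s\mathbf{e}_{n'+1}}$ and invoking connectedness (as in Lemma \ref{lem: connectedness of KR crystal AI}) to guarantee $K_{\W_s}(x)(\mathcal{L}_s(x))\subset\mathcal{L}^\vee_s(x^{-1})$ then yields $\K_{\B_s}(x)(b_{s\mathbf{e}_1})=b^\vee_{s\mathbf{e}_{n'+1}}=\K(b_{s\mathbf{e}_1})$.

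Step (2) is the computational heart. For fixed $i$ and $\alpha$, I would split into the cases $\alpha_i>\alpha_{i+n'}$ and $\alpha_i\le\alpha_{i+n'}$ to evaluate $\tilde{B}_i(x^db_\alpha)$, and in parallel apply the dual-crystal rule for $\tilde{B}_i$ on $b^\vee_{\alpha'}$, which is governed by the comparison of $\alpha'_{i+1}$ and $\alpha'_{i+n'+1}$. Here the identities of step (1) pay off: since $\alpha'_{i+1}-\alpha'_{i+n'+1}=\alpha_{i+n'+1}-\alpha_{i+1}$, the branch taken on the target side is controlled by the same kind of comparison as on the source side, and one checks that $\K$ intertwines the two. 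The remaining bookkeeping is to verify that the shift $I(\tilde{B}_i\alpha)-I(\alpha)$ matches the change in the power of $x$ produced by $\tilde{B}_i$ on $\B_s^\vee(x^{-1})$; this is a short computation using $I(\alpha)=\min(\alpha_1,\alpha_{n'+1})-\sum_{j=n'+1}^n\alpha_j$ together with the $\delta_{i,0}$ factor distinguishing the affine node.

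For step (3) I would show that every $b_\alpha$ is connected to $b_{s\mathbf{e}_1}$. The available moves are, for each $i$, $\alpha\mapsto\alpha-\mathbf{e}_i+\mathbf{e}_{i+1}$ when $\alpha_i>\alpha_{i+n'}$ and its reverse $\alpha\mapsto\alpha+\mathbf{e}_i-\mathbf{e}_{i+1}$ when $\alpha_i\ge\alpha_{i+n'}$ (reading a $\tilde{B}_i$-edge backwards), together with the analogous moves coming from $\tilde{E}_{\tau(i)}$. I would use these to push mass toward position $1$, reducing $\alpha$ step by step to $s\mathbf{e}_1$, in the spirit of Lemmas \ref{lem:O3}–\ref{lem:O4}, the care being to keep the relevant max-conditions satisfied along the way. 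I expect this connectedness argument, rather than the (lengthy but mechanical) intertwining check of step (2), to be the main obstacle, since it requires choosing the reduction order so that each intermediate step is a legitimate $\tilde{B}_i$-edge.
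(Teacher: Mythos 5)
Your proposal is correct and follows essentially the same route as the paper: the paper's proof also consists of (a) normalizing at the distinguished vector $b_{s\mathbf{e}_1}$ with $\K_{\B_s}(x)(b_{s\mathbf{e}_1})=b^\vee_{s\mathbf{e}_{n'+1}}$ by weight considerations, (b) a case-by-case check that $\K$ commutes with every $\tilde{B}_i$ using exactly the identity $\alpha'_i-\alpha'_{i+n'}=\alpha_{i+n'}-\alpha_i$ you isolate, and (c) connectedness of the $\imath$crystal $\B_s$, proved by applying the operators $\tilde{B}_i^{\max}$ in a suitable order to reduce any $b_\alpha$ to $b_{s\mathbf{e}_1}$, which is the same mass-pushing strategy you describe.
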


We list necessary lemmas to prove this proposition below.

\begin{lemma}
We have
\[
\K(b_{s\mathbf{e}_1}) = \K_{\B_s}(x)(b_{s\mathbf{e}_1}) = b_{s\mathbf{e}_{n'+1}}^\vee.
\]
\end{lemma}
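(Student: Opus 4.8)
The plan is to verify the two asserted equalities separately. The first, $\K(b_{s\mathbf{e}_1}) = b_{s\mathbf{e}_{n'+1}}^\vee$, is a direct substitution into the explicit formula defining $\K$ in the preceding Proposition, whereas the second, $\K_{\B_s}(x)(b_{s\mathbf{e}_1}) = b_{s\mathbf{e}_{n'+1}}^\vee$, requires analyzing the genuine quantum $K$-matrix on the lifted vector $v_{s\mathbf{e}_1} = v_1^{\otimes s}$.

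For the first equality I would set $\alpha = s\mathbf{e}_1$ (so $\alpha_1 = s$ and $\alpha_i = 0$ otherwise, indices read mod $n$) and evaluate the two formulas. One gets $I(s\mathbf{e}_1) = \min(\alpha_1,\alpha_{n'+1}) - \sum_{j=n'+1}^{n}\alpha_j = \min(s,0) - 0 = 0$, and a short case check of $\alpha'_i = \alpha_{i+1} + \alpha_{i+n'+1} + \max(\alpha_i,\alpha_{i+n'}) - \alpha_i - \max(\alpha_{i+1},\alpha_{i+n'+1})$ — in which the unique nonzero input $\alpha_1 = s$ enters only through the indices $i \in \{1, n', n'+1, n\}$ — yields $\alpha'_{n'+1} = s$ and $\alpha'_i = 0$ for $i \neq n'+1$. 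Hence $\K(x^d b_{s\mathbf{e}_1}) = x^d b_{s\mathbf{e}_{n'+1}}^\vee$.

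For the second equality I would track the weight of $v_{s\mathbf{e}_1} = v_1^{\otimes s}$ through the construction of $K_{\W_s}(x)$. The building block $K_\W(x) = \sum_j x^{\chi(j>n')}E_{j+n',j}$ sends $v_j \mapsto x^{\chi(j>n')}v_{j+n'}^\vee$, thereby inducing on weights the affine reflection $\epsilon_j \mapsto -\epsilon_{j+n'}$, while every quantum $R$-matrix entering the fusion formula \eqref{rec for K} is $\U$-linear and hence weight-preserving. Consequently $K_{\W_s}(x)$ carries the weight-$s\epsilon_1$ subspace of $\W_s$ into the weight-$(-s\epsilon_{n'+1})$ subspace of $\W_s^\vee$. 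The latter is one-dimensional, spanned by $v_{s\mathbf{e}_{n'+1}}^\vee = (v_{n'+1}^\vee)^{\otimes s}$, since $\mathrm{wt}(v_\beta^\vee) = -\sum_i \beta_i \epsilon_i = -s\epsilon_{n'+1}$ forces $\beta = s\mathbf{e}_{n'+1}$. Therefore $K_{\W_s}(x)(v_{s\mathbf{e}_1}) = c(q,x)\, v_{s\mathbf{e}_{n'+1}}^\vee$ for a single scalar $c(q,x)$.

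Because the image is supported on the single basis vector $v_{s\mathbf{e}_{n'+1}}^\vee$, the $q$-normalized reduction of $K_{\W_s}(x)$ sends $b_{s\mathbf{e}_1}$ to $c_0(x)\, b_{s\mathbf{e}_{n'+1}}^\vee$ with $c_0(x) \in \C(x)$; since the right-hand side must lie in the crystal basis $\B_s^\vee(x^{-1})$, necessarily $c_0(x) = x^d$, and the freedom of $x^d$ in the normalization (the uniqueness lemma for combinatorial $K$-matrices) lets me fix $d = 0$, in agreement with $I(s\mathbf{e}_1) = 0$. I expect the main obstacle to be the weight-tracking step: one must argue carefully that the composite of $R$-matrices and partial $K$-actions in \eqref{rec for K} shifts the total weight by exactly $s$ copies of the $\W$-level reflection $\epsilon_1 \mapsto -\epsilon_{n'+1}$. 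If an explicit route is preferred, the same conclusion follows by induction on $s$ using \eqref{rec for K}, via the observation that $R_{\W^\vee,\W}(v_{n'+1}^\vee \otimes v_1)$ is a scalar multiple of the single term $v_1 \otimes v_{n'+1}^\vee$ (immediate from the displayed formula for $R_{\W^\vee,\W}$), so that $v_1$ may be transported to the first tensor slot and converted by $K_1$; there the remaining task is to check that the accumulated product of $R$-matrix coefficients reduces, in the crystal limit, to a pure power of $x$.
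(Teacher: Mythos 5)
Your proposal is correct and follows essentially the same route as the paper, whose proof is the one-line "weight consideration and easy calculation": your direct substitution into the formulas for $I(\alpha)$ and $\alpha'$ is the "easy calculation," and your observation that $K_{\W_s}(x)$ sends the one-dimensional extreme weight space spanned by $v_1^{\otimes s}$ into the one-dimensional space spanned by $(v_{n'+1}^\vee)^{\otimes s}$ (with the residual power of $x$ absorbed by the normalization freedom) is the "weight consideration." Your fallback induction via \eqref{rec for K} and the single-term action of $R_{\W^\vee,\W}$ on $v_{n'+1}^\vee\otimes v_1$ is a sound way to make the weight-tracking fully explicit, but it goes beyond what the paper records.
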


\begin{proof}
  The assertion follows from weight consideration and easy calculation.
\end{proof}

\begin{lemma}
$\K$ commutes with $\tilde{B}_i$ for any $i \in I$.
\end{lemma}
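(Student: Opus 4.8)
The plan is to verify the identity $\K\circ\tilde B_i=\tilde B_i\circ\K$ on each basis vector $x^db_\alpha\in\B_s(x)$ by direct computation, exploiting the fact that in type A.4 the situation is uniform. Since $n\ge 4$ is even and $\tau(i)=i+n'\pmod n$, the nodes $i$ and $\tau(i)$ are never adjacent in the affine Dynkin diagram, so $a_{i,\tau(i)}=0$ for every $i\in I$. Consequently both $\tilde B_i$ on $\B_s(x)$ and $\tilde B_i$ on $\B_s^\vee(x^{-1})$ are governed exclusively by the $a_{i,\tau(i)}=0$ rule recalled in Section~\ref{sec:type A}; in particular no factors $\tfrac1{\sqrt2}$ and no genuine linear combinations occur, so $\K$ is an honest bijection of crystal bases and the verification reduces to matching, in each case, a single target vector together with its power of $x$.

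First I would rewrite the structure constants in a more tractable form. Using $\max(a,b)-a=(b-a)_+$ and $a+b-\max(a,b)=\min(a,b)$, the defining formula for $\alpha'$ becomes
$$
\alpha'_i=(\alpha_{\tau(i)}-\alpha_i)_+ + \min(\alpha_{i+1},\alpha_{\tau(i)+1}),
$$
and $I(\alpha)=\min(\alpha_1,\alpha_{n'+1})-\sum_{j=n'+1}^n\alpha_j$. The branch of $\tilde B_i$ on the domain is decided by comparing $\alpha_i$ and $\alpha_{\tau(i)}$, while the branch of $\tilde B_i$ on the codomain (applied to $b^\vee_{\alpha'}$) is decided by comparing $\alpha'_{i+1}$ and $\alpha'_{\tau(i)+1}$. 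The key preliminary identity, proved by a short computation from the displayed formula together with $\tau(\tau(i))=i$ and $\tau(k+1)=\tau(k)+1$, is
$$
\alpha'_{i+1}-\alpha'_{\tau(i)+1}=\alpha_{\tau(i)+1}-\alpha_{i+1},
$$
which translates the codomain branch condition back into an inequality on the original coordinates. With these two comparisons in hand, the problem becomes a finite case analysis on the relative order of the four coordinates $\alpha_i,\alpha_{i+1},\alpha_{\tau(i)},\alpha_{\tau(i)+1}$.

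In each case I would track how $\tilde B_i$ modifies the pair $(\alpha_i,\alpha_{i+1})$ (when $\alpha_i>\alpha_{\tau(i)}$) or the pair $(\alpha_{\tau(i)},\alpha_{\tau(i)+1})$ (when $\alpha_i\le\alpha_{\tau(i)}$), and then feed the resulting vector $\beta$ through $\K$. Because $\alpha'_j$ depends only on the quadruple $\alpha_j,\alpha_{j+1},\alpha_{\tau(j)},\alpha_{\tau(j)+1}$, such a box move alters $\alpha'$ only in a few coordinates near $i$ and $\tau(i)$, so the comparison of $\beta'$ with the $\tilde B_i$-image of $\alpha'$ is coordinate-local; the thresholds produced by the $(\cdot)_+$ and $\min$ are precisely the points where the branch conditions flip, which keeps the sub-cases consistent. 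A useful economy is the $\tau$-symmetry: $\tilde B_{\tau(i)}$ inverts $\tilde B_i$ away from boundary ties (the analogue of Lemma~\ref{lem:O2}(1)), so it suffices to treat $i$ in one half of $I$ together with $i=0$.

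The main obstacle will be matching the power of $x$, i.e. the energy bookkeeping, and it is concentrated at the two nodes touching the affine vertex, $i=0$ and $i=n'$ (note $\tau(0)=n'$ and $\tau(n')=0$). For $i\notin\{0,n'\}$ neither $\tilde B_i$ carries an explicit $x$-shift, and one checks that the branch condition itself forces $\min(\alpha_1,\alpha_{n'+1})$ and the block sum $\sum_{j=n'+1}^n\alpha_j$ to be preserved, so $I(\beta)=I(\alpha)$ and the powers agree automatically. For $i\in\{0,n'\}$, however, the domain operator contributes a shift $\pm\delta_{i,0}$ or $\pm\delta_{\tau(i),0}$, the codomain operator contributes another such shift depending on its own branch, and $I$ itself jumps because the move crosses the boundary between the two halves; the content of the lemma is that $I(\beta)-I(\alpha)$ exactly compensates the difference of these two $\delta$-shifts in every sub-branch. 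Verifying this cancellation, case by case, is the delicate step, entirely parallel to the energy-function computations in the A.1 and A.3 proofs.
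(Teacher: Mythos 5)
Your proposal is correct and follows essentially the same route as the paper: a direct case-by-case verification in which the domain branch is decided by comparing $\alpha_i$ with $\alpha_{\tau(i)}$, the codomain branch is translated back via the identity $\alpha'_{i+1}-\alpha'_{\tau(i)+1}=\alpha_{\tau(i)+1}-\alpha_{i+1}$ (the paper uses the equivalent $\alpha'_i-\alpha'_{i+n'}=\alpha_{i+n'}-\alpha_i$), and the energy bookkeeping is isolated at $i\in\{0,n'\}$ since $\tilde{B}_i$ preserves $I(\alpha)$ otherwise. Your rewriting of $\alpha'_i$ as $(\alpha_{\tau(i)}-\alpha_i)_+ +\min(\alpha_{i+1},\alpha_{\tau(i)+1})$ is a cosmetic but genuine simplification of the same computation.
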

\begin{proof}
We first prove the commutativity when $x=1$.
Let $b_{\alpha} \in \B_s$. In the case of type A.4, the action of $\tilde{B}_i$ on $\B_s$ is given by the case of $a_{i,\tau(i)} = 0$. To describe it concretely, we assume $\varphi_i(b_{\alpha}) > \varphi_{i+n'}(b_{\alpha})$. In this case, it satisfies $\tilde{B}_ib_{\alpha} = b_{\alpha - \eb_i + \eb_{i+1}}$. Then, the entries of $\K\tilde{B}_ib_{\alpha}$ related to the $i$-th and $(i+1)$-th ones of $\tilde{B}_ib_{\alpha}$ are described as follows:
\[
\begin{array}{ll}
\text{$(i-1)$-th} \ & \ \alpha_{i+n'} - \alpha_{i-1} + \max(\alpha_{i-1} , \alpha_{i+n'-1})\\
\text{$i$-th} \ & \ \alpha_{i+1} + \alpha_{i+n'+1} - \max(\alpha_{i+1}+1 , \alpha_{i+n'+1}) + 1\\
\text{$(i+1)$-th} \ & \ \alpha_{i+2} + \alpha_{i+n'+2} + \max(\alpha_{i+1}+1 , \alpha_{i+n'+1}) - \alpha_{i+1} - \max(\alpha_{i+2},\alpha_{i+n'+2}) -1\\
\text{$(i+n'-1)$-th} \ & \ \alpha_{i+n'} - \alpha_{i+n'-1} + \max(\alpha_{i-1} , \alpha_{i+n'-1})\\
\text{$(i+n')$-th} \ & \ \alpha_i + \alpha_{i+1} -\alpha_{i+n'}+\alpha_{i+n'+1}- \max(\alpha_{i+1}+1 , \alpha_{i+n'+1})\\
\text{$(i+n'+1)$-th} \ & \ \alpha_{i+2} + \alpha_{i+n'+2} + \max(\alpha_{i+1}+1 , \alpha_{i+n'+1})-\alpha_{i+n'+1}-\max(\alpha_{i+2} , \alpha_{i+n'+2})\\
\end{array}
\]
To see the action of $\tilde{B}_i$ on $\K (b_{\alpha})$, we should compare $\varphi_i(\K (b_{\alpha}))$ and $\varphi_{i+n'}(\K (b_{\alpha}))$. Assume $\varphi_i(\K (b_{\alpha})) > \varphi_{i+n'}(\K (b_{\alpha}))$. We can see immediately the condition is equivalent to $\alpha_{i+n'+1} > \alpha_{i+1}$. Then, it satisfies $\tilde{B}_i\K (b_{\alpha}) = \tilde{F}_i\K (b_{\alpha})$. The entries of $\K (b_{\alpha})$ except $i$-th and $(i+1)$-th ones are invariant by the action of $\tilde{B}_i$ and equivalent to the ones of $\K(\tilde{B}_ib_{\alpha})$ in this case. The $i$-th entry is $\alpha_{i+1}+1$ and $(i+1)$-th one is $\alpha_{i+2}+\alpha_{i+n'+2}+\alpha_{i+n'+1}-\alpha_{i+1}-\alpha_{i+1}-\max(\alpha_{i+2},\alpha_{i+n'+2})-1$, and they are equivalent to the $i$-th and $(i+1)$-th one of $\K(\tilde{B}_ib_{\alpha})$ in the condition $\alpha_{i+n'+1} > \alpha_{i+1}$. Similarly, we can see the assertion on the other conditions.

To get the formula for $I(\alpha)$, we compare the dependence of $x$ by applying $\tilde{B}_0$ on both sides of 
$\K(b_\alpha)=x^{I(\alpha)}b_{\alpha'}^\vee$. Noting that $\alpha'_i-\alpha'_{i+n'}=\alpha_{i+n'}-\alpha_i$, we obtain
\begin{align*}
I(\tilde{F}_0\alpha)&=\left\{
\begin{array}{ll}
I(\alpha)+1\quad&(\alpha_n>\alpha_{n'},\alpha_1\ge\alpha_{n'+1}),\\
I(\alpha)+2&(\alpha_n>\alpha_{n'},\alpha_1<\alpha_{n'+1}),
\end{array}
\right. \\
I(\tilde{E}_{n'}\alpha)&=\left\{
\begin{array}{ll}
I(\alpha)\quad&(\alpha_n\le\alpha_{n'},\alpha_1\ge\alpha_{n'+1}),\\
I(\alpha)+1\quad&(\alpha_n\le\alpha_{n'},\alpha_1<\alpha_{n'+1}).
\end{array}
\right.
\end{align*}
Here $\tilde{F}_0\alpha$ or $\tilde{E}_{n'}\alpha$ should be understood as an action on $\B_s(x=1)$. Noting the fact that 
$\tilde{B}_i$ with $i\ne0,n'$ does not change the value of $I(\alpha)$, we obtain the desired formula.
\end{proof}

\begin{lemma}
The $\imath$crystal $\B_s$ is connected.
\end{lemma}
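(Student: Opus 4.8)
The plan is to deduce the whole Proposition from connectedness: once $\B_s$ is connected, the fact that $\K$ commutes with every $\tilde{B}_i$ (proved in the previous lemma) and that $\K_{\B_s}(x)$ does too (being an $\imath$crystal isomorphism by Proposition \ref{prop: comb K is icry isom}), together with the normalization $\K(b_{s\mathbf{e}_1})=b^\vee_{s\mathbf{e}_{n'+1}}=\K_{\B_s}(x)(b_{s\mathbf{e}_1})$, forces $\K=\K_{\B_s}(x)$ on all of $\B_s(x)$, since both maps are propagated along $\imath$crystal edges in the same way from the single base vertex $b_{s\mathbf{e}_1}$. Thus it suffices to show that every $b_\alpha$ is connected to $b_{s\mathbf{e}_1}$, and I would organize this exactly as for type A.3 in Proposition \ref{prop: connectedness AIII}: a first reduction to a normal form using the operators $\tilde{B}_i$ with $i\neq 0$ (the analogue of Lemma \ref{lem:O3}), followed by a gluing of the normal forms using the affine operator $\tilde{B}_0$ (the analogue of Lemma \ref{lem:O4}). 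Here $\tau(i)=i+n'\pmod n$ and $a_{i,\tau(i)}=0$, so each $\tilde{B}_i$ acts as $\tilde{F}_i$, moving one unit from position $i$ to $i+1$, when $\alpha_i>\alpha_{i+n'}$, and as $\tilde{E}_{i+n'}$, moving one unit from position $i+n'+1$ to $i+n'$, when $\alpha_i\le\alpha_{i+n'}$.

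For the first step I would show that, using only $\tilde{B}_i$ with $i\neq 0$, any $b_\alpha$ can be connected to a vector $b_{a\mathbf{e}_1+(s-a)\mathbf{e}_{n'+1}}$ supported on the two $\tau$-paired head positions $1$ and $n'+1$. Concretely one sweeps the indices in a fixed order, transporting the weight sitting in positions $2,\dots,n'$ and $n'+2,\dots,n$ toward positions $1$ and $n'+1$, the explicit comparison rules above dictating which branch each $\tilde{B}_i$ takes. The difficulty, and what I expect to be the main obstacle, is that the branch chosen by $\tilde{B}_i$ depends on the running comparison $\alpha_i\gtrless\alpha_{i+n'}$, so the first block $\{1,\dots,n'\}$ and the second block $\{n'+1,\dots,n\}$ do not decouple; I would handle this by induction on the rank $n'$, collapsing the innermost $\tau$-pair exactly as in the $n\to n-2$ reduction in the proof of Lemma \ref{lem:O1}.

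For the second step I would connect the normal forms $b_{a\mathbf{e}_1+(s-a)\mathbf{e}_{n'+1}}$ to one another. When $s-a>a$ one transports a single unit from position $n'+1$ around the cycle $n'+1\to n'+2\to\dots\to n\to 1$ by applying $\tilde{B}_{n'+1},\dots,\tilde{B}_{n-1}$ in their $\tilde{F}$-branches and then $\tilde{B}_0$ in its $\tilde{F}_0$-branch, the affine node being precisely what lets weight cross from the second block back into the first; this produces $b_{(a+1)\mathbf{e}_1+(s-a-1)\mathbf{e}_{n'+1}}$ and so links all $a<s/2$. The remaining normal forms are reached by the $\imath$crystal automorphism $\rho$ induced by the diagram rotation $i\mapsto i+n'$: since $\rho$ coincides with $\tau$ it commutes with $\tau$ and preserves the $\imath$crystal structure, while it exchanges $b_{a\mathbf{e}_1+(s-a)\mathbf{e}_{n'+1}}$ with $b_{(s-a)\mathbf{e}_1+a\mathbf{e}_{n'+1}}$. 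The two chains thus obtained overlap around $a=s/2$, so all normal forms, and in particular $b_{s\mathbf{e}_1}$, lie in one connected component, completing the argument once the bookkeeping in the first step is verified.
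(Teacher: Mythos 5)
Your architecture differs from the paper's, and the part you leave unverified is exactly where the difficulty sits. Your Step 2 is sound: transporting one unit along $n'+1\to n'+2\to\cdots\to n\to 1$ via the $\tilde{F}$-branches of $\tilde{B}_{n'+1},\ldots,\tilde{B}_{n-1},\tilde{B}_0$ does link $b_{a\eb_1+(s-a)\eb_{n'+1}}$ to $b_{(a+1)\eb_1+(s-a-1)\eb_{n'+1}}$ when $s-a>a$, and the rotation $i\mapsto i+n'$ intertwines $\tilde{B}_i$ with $\tilde{B}_{\tau(i)}$, hence carries connected sets to connected sets and glues the two half-chains. The gap is Step 1. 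Restricting to $i\neq 0$ means you can never move weight from position $n'+1$ to $n'$ nor from $n$ to $1$, so in particular you cannot first empty position $n'+1$; this is precisely what breaks a "fixed sweep": the $\tilde{F}$-branch of $\tilde{B}_1$ fires whenever $\alpha_1>\alpha_{n'+1}$ and re-populates $\alpha_2$ after you have zeroed it, and symmetrically the $\tilde{F}$-branch of $\tilde{B}_{n'+1}$ fires whenever $\alpha_{n'+1}>\alpha_1$ and re-populates $\alpha_{n'+2}$. Your proposed fix --- induction on $n'$ ``as in the $n\to n-2$ reduction of Lemma \ref{lem:O1}'' --- does not transfer: that lemma partitions weight vectors by linear inequalities in type A.3 and says nothing about how the branch choices of the operators $\tilde{B}_i$ behave under collapsing a $\tau$-pair. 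As written, Step 1 is an unproved claim, and you acknowledge as much.

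The paper's proof shows the two-step structure is unnecessary. Writing $\tilde{B}_i^{\max}$ for the stabilized power, one checks that $\tilde{B}_i^{\max}b_\alpha=b_{\alpha'}$ with $\alpha'_{i+n'+1}=0$: the $\tilde{F}_i$-branch first drains $\alpha_i$ down to $\alpha_{i+n'}$, after which the $\tilde{E}_{\tau(i)}$-branch drains $\alpha_{i+n'+1}$ to zero. Applying these in the order $\tilde{B}_{n'-1}^{\max},\ldots,\tilde{B}_1^{\max},\tilde{B}_0^{\max},\tilde{B}_{n-1}^{\max},\ldots,\tilde{B}_{n'+1}^{\max}$ zeroes $\alpha_n,\alpha_{n-1},\ldots,\alpha_2$ in turn, and the order is chosen so that at each later stage the $\tilde{F}$-branch condition $\alpha_i>\alpha_{\tau(i)}$ is already violated (that $\alpha_i$ has been zeroed), so only the $\tilde{E}_{\tau(i)}$-branch acts and no previously created zero is disturbed. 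Every $b_\alpha$ thus reaches $b_{s\eb_1}$ directly --- note that $\tilde{B}_0^{\max}$ is used in the middle of the sweep, which is exactly the move your Step 1 forbids itself --- and connectedness follows from $\tilde{B}_i\tilde{B}_{\tau(i)}=\mathrm{id}$. To salvage your route you would need a genuine termination argument for the normal-form reduction without $\tilde{B}_0$; the paper's single ordered sweep is both shorter and complete.
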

\begin{proof}
Let us introduce the notation $\tilde{B}_i^{\max}$ to mean 
$\tilde{B}_i^{\max}b=\tilde{B}_i^cb$ where $c=\max\{k\ge0\mid \tilde{B}_i^kb\ne0\}$.
Then, for any $b_\alpha\in \B_s$, we have $\tilde{B}_i^{\max}b_\alpha=b_{\alpha'}$ where $\alpha'_{i+n'+1}=0$.
Using this property of $\tilde{B}_i^{\max}$, apply to $b_\alpha$ $\tilde{B}_{n'-1}^{\max},
\tilde{B}_{n'-2}^{\max},\ldots,\tilde{B}_{1}^{\max},\tilde{B}_{0}^{\max},\tilde{B}_{n-1}^{\max},
\ldots,\tilde{B}_{n'+1}^{\max}$, successively. Then, at each application, $\alpha_n,
\alpha_{n-1},\ldots,$ $\alpha_{n'+2},\alpha_{n'+1},\alpha_{n'},\ldots,\alpha_2$ turn out to 
be 0. Since $\tilde{B}_i\tilde{B}_{i+n'}=\mathrm{id}$, we are done.
\end{proof}

\begin{example} \label{ex:reflectionA4}
Set $n=4$. Below is the graphical presentation when we apply the both hand sides of the combinatorial reflection equation 
\eqref{comb RE} on $x^0b_{(3,1,2,1)}\ot y^0b_{(1,2,1,1)}\in\B_7(x)\ot\B_5(y)$.
\[
\begin{picture}(300,185)(-32,5)
\put(0,20){
\put(0,-8){\line(0,1){170}}
\put(0,15){\put(0,30){\line(2,1){70}}\put(0,30){\vector(2,-1){70}}}
\put(0,0){\put(0,90){\line(2,3){35}}\put(0,90){\vector(2,-3){62}}}
\put(35,149){\tiny{$x^0(3121)$}}
\put(-3,56){\tiny{$y(2111)$}}
\put(74,80){\tiny{$y^0(1211)$}}
\put(14,76){\tiny{$x^{-1}(1222)^\vee$}}
\put(36,43){\tiny{$x^0(2122)^\vee$}}
\put(10,21){\tiny{$y^0(1121)^\vee$}}
\put(75,7){\tiny{$y^4(1112)^\vee$}}
\put(65,-11){\tiny{$x^{-4}(2131)^\vee$}}
}
\put(115,85){$=$}
\put(160,20){
\put(0,-8){\line(0,1){170}}
\put(0,15){\put(0,30){\line(2,3){65}}\put(0,30){\vector(2,-3){30}}}
\put(0,5){\put(0,90){\line(2,1){70}}\put(0,90){\vector(2,-1){70}}}
\put(63,149){\tiny{$x^0(3121)$}}
\put(74,129){\tiny{$y^0(1211)$}}
\put(19,115){\tiny{$y^4(1121)$}}
\put(41,92){\tiny{$x^{-4}(3211)$}}
\put(-7,81){\tiny{$y^2(2111)^\vee$}}
\put(15,53){\tiny{$x^{-2}(2212)$}}
\put(74,55){\tiny{$y^4(1112)^\vee$}}\put(32,-9){\tiny{$x^{-4}(2131)^\vee$}}
}
\end{picture}
\]
\end{example}

\section*{Acknowledgments}
The authors thank Atsuo Kuniba for letting us know the reference \cite{BPO}, and
Yasuhiko Yamada for interest to our work.
M.O. is supported by JSPS KAKENHI Grant Number~JP19K03426, and
H.W. by JP21J00013.
This work was partly supported by Osaka Central Advanced Mathematical Institute (MEXT Joint Usage/Research Center on Mathematics and Theoretical Physics JPMXP0619217849).


\begin{thebibliography}{99}

\bibitem{BK} M.~Balagovi\'c and S.~Kolb, Universal K-matrix for quantum symmetric pairs, J. Reine Angew. Math. {\bf 747} (2019), 299--353.

\bibitem{BW1} H.~Bao and W.~Wang, A new approach to Kazhdan-Lusztig theory of type $B$ via quantum symmetric pairs, Ast\'erisque 2018, no. 402, vii+134 pp.

\bibitem{BW2} H.~Bao and W.~Wang, 
Canonical bases arising from quantum symmetric pairs,
Invent. Math. {\bf 213} (2018), no. 3, 1099--1177.

\bibitem{BW3} H.~Bao and W.~Wang, 
Canonical bases arising from quantum symmetric pairs of Kac-Moody type,
Compos. Math. {\bf 157} (2021), no. 7, 1507-1537.

\bibitem{BPO} R.~E.~Behrend, P.~A.~Pearce and D.~L.~O'Brien,
Interaction-round-a-face models with fixed boundary conditions: The ABF fusion hierarchy,
J. Stat. Phys. {\bf 84} (1996), 1--48. 

\bibitem{DO} E.~Date and M.~Okado,
Calculation of excitation spectra of the spin model related with the
vector representation of the quantized affine algebra of type $A_n^{(1)}$,
Int. Jour. of Mod. Phys. A{\bf 9} (1994), 399--417.

\bibitem{FOY} 
K.~Fukuda, M.~Okado and Y.~Yamada,
Energy functions in box-ball systems,
Int. J. Mod. Phys. {\bf A15} (2000), 1379--1392.

\bibitem{HKOTT:2002}
G.~Hatayama, A.~Kuniba, M.~Okado, T.~Takagi and Z.~Tsuboi,
Paths, crystals and fermionic formulae, MathPhys Odyssey 2001, 205--272, 
Prog.\ Math.\ Phys.\ {\bf 23}, Birkh\"auser Boston, Boston, MA, 2002.

\bibitem{HKOTY:1999} 
G.~Hatayama,  A.~Kuniba, M.~Okado, T.~Takagi and Y.~Yamada, 
Remarks on fermionic formula,
Contemporary Math.\ {\bf 248} (1999) 243--291.

\bibitem{HKOTY:2002} 
G.~Hatayama, A.~Kuniba, M.~Okado, T.~Takagi and Y.~Yamada, 
Scattering rules in soliton cellular automata associated with crystal bases,
Contemporary Math.\ {\bf 297} (2002) 151--182.

\bibitem{Ka:1991}
M.~Kashiwara,
\textit{On crystal bases of the $q$-analogue of universal enveloping algebras},
Duke Math.\ J.\ {\bf 63} (1991), 465--516.

\bibitem{Ka:1994} 
M.~Kashiwara, 
On crystal bases, in: Representations of groups (Banff, AB), 155--197,
CMS Conf. Proc., 16, Amer. Math. Soc., Providence, RI, 1995.

\bibitem{Ka:2002}
M.~Kashiwara, 
On level zero representations of quantized affine algebras,
Duke Math.\ J.\ {\bf 112} (2002) 117--175.

\bibitem{Ko} S.~Kolb,
Quantum symmetric Kac-Moody pairs, 
Adv. Math. {\bf 267} (2014), 395--469.


\bibitem{KOY} A.~Kuniba, M.~Okado and Y.~Yamada,
Box-ball system with reflecting end,
J. Nonlin. Math. Phys. {\bf 12} (2005), 475--507.

\bibitem{KOYo}  A.~Kuniba, M.~Okado, A.~Yoneyama, Matrix product solution to the reflection equation associated with a coideal subalgebra of $U_q(A^{(1)}_{n-1})$,
Lett. in Math. Phys. {\bf 109} (2019), 2049--2067.


\bibitem{Le} G.~Letzter,
Symmetric pairs for quantized enveloping algebras,
J. Algebra {\bf 220} (1999), no. 2, 729--767.

\bibitem{L} G.~Lusztig,
Introduction to Quantum Groups, Reprint of the 1994 edition. Modern Birkh\"{a}user
Classics. Birkh\"{a}user/Springer, New York, 2010. xiv+346 pp.

\bibitem{OS}
M.~Okado and A.~Schilling,
Existence of Kirillov-Reshetikhin crystals for nonexceptional types,
Representation Theory \textbf{12} (2008) 186--207.

\bibitem{RV} V.~Regelskis and B.~Vlaar, Reflection matrices, coideal subalgebras and 
generalized Satake diagrams of affine type, arXiv:1602.08471.

\bibitem{Wa1} H.~Watanabe,
A new tableau model for irreducible polynomial representations of the orthogonal group,
arXiv:2107.00170.

\bibitem{Wa2} H.~Watanabe,
Crystal bases of modified $\imath$quantum groups of certain quasi-split types,
arXiv:2110.07177.

\end{thebibliography}
\end{document}